\documentclass[11pt,a4paper,oneside]{article}
\raggedbottom
\sloppy

\usepackage{amsfonts, amsmath, amssymb, amsthm, mathtools}
\usepackage[colorlinks=true,citecolor=blue!50!black,linkcolor=red!50!black]{hyperref}
\usepackage[margin=1in]{geometry}
\usepackage[utopia]{mathdesign}
\usepackage[mathscr]{euscript}
\usepackage[utf8]{inputenc}
\usepackage{placeins}

\setcounter{topnumber}{99}
\setcounter{bottomnumber}{99}
\setcounter{totalnumber}{99}

\usepackage{graphicx}
\usepackage{parskip}
\usepackage{enumerate}
\usepackage{verbatim}
\usepackage{tikz}
\usepackage{units}

\usetikzlibrary{calc}
\usepackage[outline]{contour}
\contourlength{1.2pt}

\begingroup
    \makeatletter
    \@for\theoremstyle:=definition,remark,plain\do{%
        \expandafter\g@addto@macro\csname th@\theoremstyle\endcsname{%
            \addtolength\thm@preskip\parskip
            }%
        }
\endgroup

\newtheorem{theorem}{Theorem}[section]
\newtheorem*{theorem*}{Theorem}
\newtheorem{lemma}[theorem]{Lemma}

\theoremstyle{definition}

\newtheorem{remark}[theorem]{Remark}
\newtheorem*{remark*}{Remark}

\newcommand{\df}[1]{{{\color{blue!50!black}\em #1}}}
\newcommand{\mb}[1]{\mathbb{#1}}
\newcommand{\mc}[1]{\mathscr{#1}}

\newcommand{\cm}{,\allowbreak }


\DeclareMathOperator{\ct}{ct}
\newcommand{\fullrs}{{\mc{R}}}
\newcommand{\projrs}{\widetilde{\mc{R}}}
\newcommand{\vcrs}{{\mc{V}}}
\newcommand{\wcrs}{{\mc{W}}}

\newcommand{\indx}{{\mathfrak{L}}}
\newcommand*{\sym}{{\mathfrak{S}}}
\newcommand{\coty}{\Omega}

\newcommand*{\projeq}{%
\mathrel{\vcenter{\offinterlineskip
\hbox{\hskip+1pt{\tiny{\rm proj}}}\vskip+1pt\hbox{\scalebox{2}[1]{$\sim$}}}}}

\newcommand*{\swapeq}{%
\mathrel{\vcenter{\offinterlineskip
\hbox{{\tiny{\rm swap}}}\vskip+1pt\hbox{\scalebox{2}[1]{$\sim$}}}}}

\DeclareMathOperator{\conv}{conv}
\DeclareMathOperator{\bump}{bump}
\DeclareMathOperator{\projrsvc}{sc}
\DeclareMathOperator{\cyclecat}{\circlearrowleft}
\DeclareMathOperator{\vertflip}{\updownarrow}

\newcommand{\eps}{\varepsilon}

\newcommand{\fig}{{Figure}}

\newcommand{\us}{ .. controls +(0.5,0) and +(-0.2,-0.2) .. ++(1,0.5) }
\newcommand{\ue}{ .. controls +(0.2,0.2) and +(-0.5,0) .. ++(1,0.5)}
\newcommand{\ds}{ .. controls +(0.5,0) and +(-0.2,0.2) .. ++(1,-0.5)}
\newcommand{\de}{ .. controls +(0.2,-0.2) and +(-0.5,0) .. ++(1,-0.5)}
\newcommand{\du}{ .. controls +(0.3,-0.3) and +(-0.3,-0.3) .. ++(1,0)}
\newcommand{\ud}{ .. controls +(0.3,0.3) and +(-0.3,0.3) .. ++(1,0)}

\newcommand{\g}[1]{ -- ++(#1,0)} 
\newcommand{\ug}[1]{ -- ++(#1,#1)} 
\newcommand{\dg}[1]{ -- ++(#1,-#1)} 

\newcommand{\usx}{ .. controls +(0.3,0) and +(-0.1,-0.1) .. ++(.75,0.5) }
\newcommand{\uex}{ .. controls +(0.1,0.1) and +(-0.3,0) .. ++(.75,0.5)}
\newcommand{\dsx}{ .. controls +(0.3,0) and +(-0.1,0.1) .. ++(.75,-0.5)}
\newcommand{\dex}{ .. controls +(0.1,-0.1) and +(-0.3,0) .. ++(.75,-0.5)}

\newcommand{\ft}{\footnotesize}


\begin{document}

\title{Realization spaces of arrangements of convex bodies}

\author{
Michael Gene Dobbins 
\thanks{GAIA, POSTECH, South Korea}
\and 
Andreas Holmsen 
\thanks{Department of Mathematical Sciences, KAIST, South Korea}
\and 
Alfredo Hubard
\thanks{GEOMETRICA, Inria Sophia-Antipolis, France}
}

\date{}

\maketitle

\begin{abstract}
We introduce \emph{combinatorial types} of arrangements of convex bodies, extending order types of point sets to arrangements of convex bodies, and study their realization spaces. 
Our main results witness a trade-off between the combinatorial complexity of the bodies and the topological complexity of their realization space. 
First, we show that every combinatorial type is realizable and its realization space is contractible under mild assumptions. 
Second, we prove a universality theorem that says the restriction of the realization space to arrangements polygons with a bounded number of vertices can have the homotopy type of any primary semialgebraic set.\footnote{%
M.~G.~Dobbins was supported by NRF grant 2011-0030044 (SRC-GAIA) funded by the government of South Korea.
A.~Holmsen was supported by Basic Science Research Program through the National Research Foundation of Korea funded by the Ministry of Education, Science and Technology (NRF-2010-0021048). 
A.~Hubard was supported by Fondation Sciences Math\'{e}matiques de Paris and by the Advanced Grant of the European Research Council GUDHI (Geometric Understanding in Higher Dimensions). 
}  
\end{abstract}

\section{Introduction}

We introduce a generalization of order type that provides a framework to study arrangements of convex sets and their convex dependencies.  The notion we introduce is closely related to wiring diagrams \cite{goodburr} or primitive sorting networks \cite{knuti}.  It is also related to double pseudoline arrangements introduced by Pocchiola and Habert \cite{habert} and double allowable sequences introduced by Goodman and Pollack \cite{GPdoub}.  
These related notions have applications in the study of visibility, transversal, and separation properties of convex sets \cite{ragga, novick1, novick2, hubsuk}.  
Combinatorial type, the generalization of order type studied here, was fundamental to the authors' work on generalizations of the Erdős-Szekeres theorem to arrangements of convex sets in the plane \cite{dhh1, dhh2}.  In this paper, we address the relevant realizability questions.

Two indexed point sets $P=\{p_1,p_2\ldots p_n\}$ and $Q=\{q_1,q_2\ldots q_n\}$ in the plane are said to have the same order type when for every triple $(i,j,k)$, the orientation of the triples $p_i,p_j,p_k$ and $q_i,q_j,q_k$ coincides.  
Equivalently, a point set $P$ corresponds to a dual family $P^*$ of oriented great circles in the sphere by projective duality, and point sets $P$ and $Q$ have the same order type when the families $P^*$ and $Q^*$ subdivide the sphere in the same way.  That is, when there is a self-homeomorphism of the sphere that sends each cell of $P^*$ to a corresponding cell of $Q^*$ and preserves orientations. 

More generally, we say that a sign function $\chi : \indx^3 \to \{+,0,-\}$ is an \df{order type} when it satisfies the axioms of rank 3 acyclic chirotopes \cite[page 126]{OMS} \cite[Chapter 10]{knuti}.  
Specifically, $\chi$ is alternating, satisfies the Grassman-Plücker Relations, is acyclic (a restatement of Radon's Partition Theorem in terms of orientations), and is not identically zero. 
Order types that satisfy $\chi( i, j, k) \neq 0$ for any $ i, j, k$ distinct are called \df{simple}, and are equivalent to {\em uniform} rank 3 acyclic chirotopes and to Donald Knuth's CC-systems.
Many geometric properties of point sets (such as convex dependency, transversal, and separation properties) depend solely on the order type of the points and not on the actual coordinates, and these axioms have been used to prove theorems and design algorithms involving such properties \cite{OMS, knuti}.

Not every order type can be realized by points in the plane, or dually by oriented great circles in the sphere.
However, by the Folkman-Lawrence Representation Theorem, any order type can be realized by a pseudocircle arrangement \cite{folkman}.  That is in the case of simple order types, by a family of simple oriented closed curves on the sphere such that each pair of curves intersect at exactly two points, any other curve separates these two points, and some point on the sphere is to the left of every curve.
We may alternatively define a simple order type to be any subdivision of the sphere by such a pseudocircle arrangement $\mc S = \{S_1,\dots,S_n\}$.
The orientations $\chi(i,j,k)$ can be recovered from this subdivision by the order $S_i,S_j,S_k$ appear on the boundary of the region to the left of all three pseudocircles.

Like simple order types, combinatorial types are finite combinatorial objects that can be associated to families of geometric objects, namely arrangements of convex bodies, which are assumed to satisfy certain genericity conditions.
These will be defined precisely in Section \ref{prelims}, but for now we describe the equivalence relation that combinatorial types induce on arrangements of convex bodies. 
To do so, we define a duality for convex bodies that is analogous to projective duality for points in the plane.
The \df{dual support curve} $A^*$ of a convex body $A$ in the plane, is the graph of its support function $h_A : \mb{S}^1 \to \mb{R}^1$, $h_A(\theta) := \max_{p \in A}\langle \theta, p \rangle$ on the cylinder $\mb S^1\times \mb R^1$,
where $\mb{S}^1$ is the unit circle and $\langle\mathbin{\cdot},\mathbin{\cdot}\rangle$ is the standard inner product. 
In this way, every arrangement $\mc{A} = \{A_1,\dots,A_n\}$ corresponds to  
the \df{dual support system} $\mc{ A}^* = \{A_1^*,\dots,A_n^*\}$ of curves on the cylinder $\mb S^1\times \mb R^1$ given by the graphs of the functions $\{h_{A_1},h_{A_2}, \ldots h_{A_n}\}$. 
In the other direction, not all functions $h\colon \mb{S}^1 \to \mb{R}^1$ are support functions, but we do have the following sufficient conditions. 

\begin{remark}\label{blashke}
Blashke showed that if $h\colon \mb{S}^1 \to \mb{R}^1$ is $C^2$-smooth and $h+h''>0$, then $h$ is the support function of a planar curve with curvature bounded by $\frac{1}{h+h''}$ \cite[Lemma 2.2.3]{Groemer1996}. 
Hence, by adding a sufficiently large constant to a family of smooth functions, we can ensure the family is the dual support system of an arrangement of convex bodies.
\end{remark}

\begin{figure}[h]
\centering

\begin{tikzpicture}[scale=1.8]

\draw[thin,black!40]
(45:{sin(25)/sin(35)} ) coordinate (a) +(164:-.2) -- +(164:.7)
 +(106:.2) -- +(106:-.7)
 +(190:1.7) -- +(190:-.2)
(45:.9) -- (45:-.2)
(165:1) +(205:.2) -- +(205:-1.1)
(205:1) +(165:.2) -- +(165:-1.1)
;

\draw[->,>=latex,black!30]
(0,0) -- (315:1.1)
;
\draw[->,>=latex,black!30]
(255:{cos(50)} ) -- (255:1.1)
;
\draw[->,>=latex,black!30]
(115:{cos(50)} ) -- (115:1.1)
;
\draw[->,>=latex,black!30]
(100:{cos(-65)} ) -- (100:1.1)
;
\draw[->,>=latex,black!30]
(74:{cos(50)} ) -- (74:1.1)
;
\draw[->,>=latex,black!30]
(16:{cos(50)} ) -- (16:1.1)
;

\path
(16:1.2) node[anchor=center] {\ft $\theta_1$}
(74:1.2) node[anchor=center] {\ft $\theta_2$}
(100:1.2) node[anchor=center] {\ft $\theta_3$}
(115:1.2) node[anchor=center] {\ft $\theta_4$}
(255:1.2) node[anchor=center] {\ft $\theta_5$}
(315:1.2) node[anchor=center] {\ft $\theta_6$}
;

\draw[thick, blue, fill=blue, fill opacity=.05]
(0,0) -- (165:1) -- (205:1) -- cycle
;

\draw[thick, cyan, fill=cyan, fill opacity=.05]
(0,0) circle ({cos(50)})
;

\node[blue] at (185:.8) {3};
\node[cyan] at (.25,-.03) {1};

\fill[ultra thick,violet]
(a) circle (.7pt)
+(.03,.15) node {2}
;

\end{tikzpicture}

\begin{tikzpicture}[xscale=.03,yscale=1]

\draw[thin,black!30]
(315,-1) -- (315,0)
(255,-1) -- (255,{cos(50)})
(115,-1) -- (115,{cos(50)})
(100,-1) -- (100,{cos(-65)})
(74,-1) -- (74,{cos(50)})
(16,-1) -- (16,{cos(50)})
;

\path
(0,-1.3) node[anchor=base] {\ft $0$}
(16,-1.3) node[anchor=base] {\ft $\theta_1$}
(74,-1.3) node[anchor=base] {\ft $\theta_2$}
(100,-1.3) node[anchor=base] {\ft $\theta_3$}
(115,-1.3) node[anchor=base] {\ft $\theta_4$}
(255,-1.3) node[anchor=base] {\ft $\theta_5$}
(315,-1.3) node[anchor=base] {\ft $\theta_6$}
(360,-1.3) node[anchor=base] {\ft $2\pi$}
;

\draw[thick, blue]
(0,0) node[left] {\ft 3} -- (75,0)
\foreach \t in {80,85,...,185}
{ -- (\t,{cos(\t-165)}) }
\foreach \t in {190,195,...,295}
{ -- (\t,{cos(\t-205)}) }
 -- (360,0)
;

\draw[thick, cyan]
(0,{cos(50)}) node[left,yshift=2pt] {\ft 1} -- (360,{cos(50)})
;

\draw[thick, violet]
(0,{(sin(25)/sin(35))*cos(-45)}) node[left,yshift=-2pt] {\ft 2}
\foreach \t in {5,10,...,360}
{ -- (\t,{(sin(25)/sin(35))*cos(\t-45)}) }
;

\end{tikzpicture}

\caption{\ft 
\textbf{Top:} An arrangement $\mc A$ and its common supporting tangents. 
\textbf{Bottom:} Its dual support system $\mc A^*$. 
}
\label{eg_dual}
\end{figure}
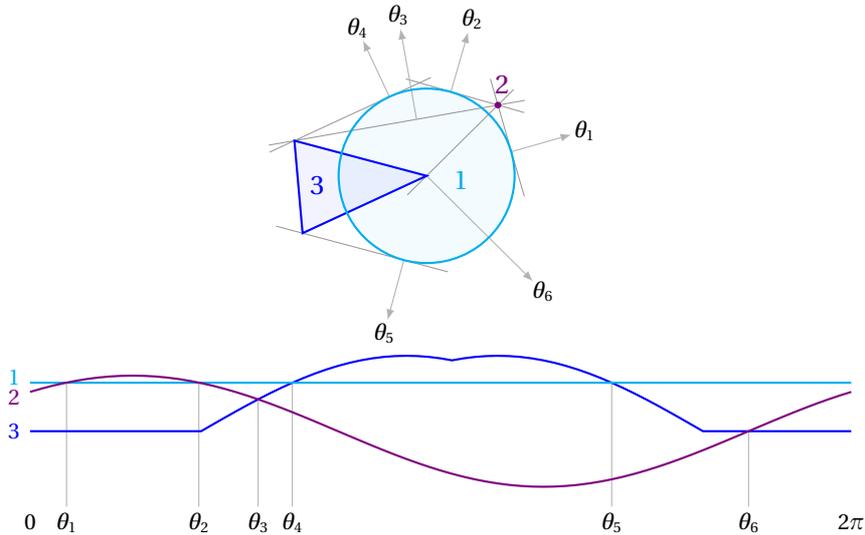

The combinatorial type of an arrangement of bodies $\ct(\mc A)$ is, essentially, a combinatorial encoding of the subdivision of the cylinder $\mb S^1\times \mb R^1$ by the dual support curves $\mc A^*$. 
For now, we take the following theorem as an alternative topological definition.

\vbox{
\begin{theorem}\label{combequivtype}
Two arrangements of convex bodies $\mc A$ and $\mc B$ have the same combinatorial type if and only if their dual systems $\mc{A}^*$ and $\mc{B}^*$ are related by a self-homeomorphism of the cylinder that preserves orientation and $+\infty$. 
\end{theorem}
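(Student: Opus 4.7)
The proof plan is to treat the theorem as a dictionary between the combinatorial data of $\ct(\mc A)$ (to be defined in Section \ref{prelims}) and the topological equivalence class of $\mc{A}^*$ as a labelled CW decomposition of the cylinder $\mb{S}^1 \times \mb{R}^1$. Under the assumed genericity conditions, $\mc{A}^*$ subdivides the cylinder into 0-cells (common tangent directions, corresponding to transversal crossings of graphs), 1-cells (arcs of individual $A_i^*$ between consecutive crossings), and 2-cells; transversality forces each vertex to have degree 4, and boundedness of the $h_{A_i}$ ensures that the top end of the cylinder lies above every $A_i^*$ while the bottom end lies below every $A_i^*$.

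For the forward direction ($\ct(\mc A)=\ct(\mc B)$ implies existence of the homeomorphism), I would build $\phi\colon \mb S^1\times \mb R^1 \to \mb S^1\times \mb R^1$ skeleton-by-skeleton. First, use the combinatorial bijection between the vertex sets of $\mc A^*$ and $\mc B^*$, matched by label pair and local configuration, to define $\phi$ on 0-cells. Second, extend along 1-cells: each edge of $\mc A^*$ has a combinatorially-prescribed image edge of $\mc B^*$ (same label $i$, same pair of endpoints), so map it by any homeomorphism of arcs; take care to do this consistently around each vertex using the recorded cyclic order of incident edges, and treat the two ends of the cylinder as extra ``boundary circles'' to which $\phi$ extends as an orientation-preserving homeomorphism. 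Third, extend to each 2-cell: by genericity these are open topological disks whose closed boundaries are combinatorially described in $\ct$, so the Jordan--Sch\"onflies theorem (or Alexander's trick after normalizing) lets us fill in $\phi$ on each face. Because the top end is above every $A_i^*$ and the bottom end is below, sending top to top automatically preserves $+\infty$, and the cyclic-order data chosen at each step ensures orientation preservation.

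For the reverse direction (homeomorphism implies same combinatorial type), assume $\phi$ is an orientation-preserving self-homeomorphism of the cylinder with $\phi(+\infty)=+\infty$ sending the labelled curve system $\mc A^*$ to $\mc B^*$. Then $\phi$ carries cells of the CW decomposition induced by $\mc A^*$ bijectively onto cells of the decomposition induced by $\mc B^*$, preserves incidences, preserves the cyclic order of edges around each vertex (by orientation), and preserves the top/bottom distinction of the two boundary circles of the cylinder (by preservation of $+\infty$). Since $\ct$ will be defined precisely in terms of these incidence, labelling, cyclic-order, and up/down data, $\ct(\mc A)=\ct(\mc B)$ follows immediately.

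The main obstacle is the forward direction, and specifically the verification that the data recorded by $\ct$ is genuinely sufficient to prescribe the extension cell by cell. Concretely, one must confirm that $\ct$ records (i) the labelled vertex set together with the transverse pair of labels meeting there, (ii) the counterclockwise cyclic order of the four edges at each vertex, (iii) the combinatorial boundary walk of each 2-cell, and (iv) which faces are incident to the top end $+\infty$ and which to the bottom. With these ingredients the Sch\"onflies step is routine; without any one of them the skeleton-by-skeleton extension can fail to be orientation-preserving or can fail to fit together across shared boundaries. So the real work is to check, once $\ct$ is defined, that no further information is needed.
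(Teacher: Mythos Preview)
Your plan contains a genuine gap, and it lies in the direction you flagged as easy, not the one you flagged as hard.

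The combinatorial type is \emph{not} defined as the isomorphism class of the labelled CW decomposition. It is defined (Section~\ref{prelims}) as the equivalence class of a \emph{swap pair} $(\rho,\sigma)$ obtained by sweeping the cylinder with the family of vertical lines $\{\theta\}\times\mb R^1$, under the equivalence generated by cyclic shifts and independent transpositions. Your reverse direction assumes that this sweep-based invariant is manifestly a function of the CW data, so that any self-homeomorphism preserving the cell structure automatically preserves $\ct$. That is precisely the assertion to be proved: a homeomorphism $\varphi$ does not take vertical lines to vertical lines, so the swap pair of $\mc B^*$ is read off along a sweep that, pulled back by $\varphi$, is a completely general monotone curve (a ``cut-path'') through $\mc A^*$. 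You must show that sweeping $\mc A^*$ along any such cut-path yields a swap pair related to the vertical-sweep swap pair by a finite chain of cyclic shifts and independent transpositions. This is not automatic and is exactly where the work is.

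The paper handles the two directions quite differently from your outline. For the forward direction it does not build $\phi$ cell by cell; instead it invokes Theorem~\ref{realizations} (contractibility of the realization space), which gives a path in $\fullrs(\coty)$ from $\mc A$ to $\mc B$ and hence an isotopy of systems. For the reverse direction it introduces cut-paths and two local ``moves'' on a pair (system, cut-path): one pushes a crossing past $\zeta_0$ and corresponds to a bump of the associated tableau, the other slides the cut-path past a curve and leaves the tableau unchanged. A descent argument on an integer invariant $M$ shows that any cut-path can be reduced by moves to a vertical one, so the tableau of $\mc T$ is carried by bumps to that of $\mc S$, i.e.\ $\ct(\mc T)=\ct(\mc S)$.

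Your forward direction could be salvaged along different lines (each elementary operation on swap pairs is realized by an explicit isotopy of the cylinder), and that would avoid the dependence on Theorem~\ref{realizations}. But your reverse direction, as written, assumes the conclusion.
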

}

Here, we say that a self-homeomorphism $\phi : \mb{S}^1 \times \mb{R}^1 \to \mb{S}^1 \times \mb{R}^1$ preserves $+\infty$ when for $y$ sufficiently large the second coordinate of $\phi(\theta, y)$ is positive for all $\theta$.  
Equivalently, if $C \subset \mb{S}^1 \times \mb{R}^1$ is an oriented curve that wraps once around the cylinder in the counter-clockwise direction, then so is its image $\phi(C)$.

In the case of points, the duality that we defined through support functions is the usual projective duality renormalized to be on the cylinder.  
Consequently, two generic point sets have the same order type if and only if they have the same combinatorial type. 
Specifically, a point in the plane can be represented in homogeneous coordinates by a line in $\mb{R}^3$, 
and its dual support curve is the intersection of the orthogonal complement of this line with the cylinder embedded in $\mb{R}^3$. 
The same relationship holds between a body in the plane represented by a cone in $\mb R^3$ and the body's dual support curve represented by its polar cone.

\paragraph{Realizing order types.}

Although combinatorial types of arrangements are more general than simple order types, we associate an order type to the class of arrangements defined by the following local condition on triples of bodies. 
We say a triple of bodies is \df{orientable} when it has the combinatorial type of three generic points, 
and we say an arrangement is orientable when it consists of at least three bodies and every triple is orientable.
In the case of an orientable arrangement $\mc{A}$, every triple $\{A_{ i},A_{ j},A_{ k}\}\subset \mc A$ contributes a single connected arc to the boundary of $\conv(A_{ i},A_{ j},A_{ k})$, and we define the \df{orientation} of an ordered triple $(A_{ i},A_{ j},A_{ k})$ to be positive when the bodies appear counter-clockwise in the given order on the boundary, and to be negative otherwise. 
Grünbaum implicitly observes that the cyclic orderings of the triples of an orientable arrangement form an order type in his discussion on planar arrangements of simple closed curves \cite[Section 3.3]{grunbaumS}.

\begin{figure}[h]
\centering

\begin{tikzpicture}

\begin{scope}[shift={(-3,0)}]
\fill[cyan]
(30:.8) circle (2pt)
;
\fill[violet]
(150:.8) circle (2pt)
;
\fill[blue]
(270:.8) circle (2pt)
;
\end{scope}

\begin{scope}[yscale=.5,xscale=.015]

\draw[thick, violet]
(0,{cos(-150)})
\foreach \t in {5,10,...,360}
{ -- (\t,{cos(\t-150)}) }
;

\draw[thick, blue]
(0,{cos(-270)})
\foreach \t in {5,10,...,360}
{ -- (\t,{cos(\t-270)}) }
;

\draw[thick, cyan]
(0,{cos(-30)})
\foreach \t in {5,10,...,360}
{ -- (\t,{cos(\t-30)}) }
;

\path
(0,-2) node (a) {\ft 0} 
(360,-2) node (b) {\ft $2\pi$}
;
\end{scope}

\end{tikzpicture}
\caption{\ft Three generic points and their support curves.}  \label{3points}
\end{figure}
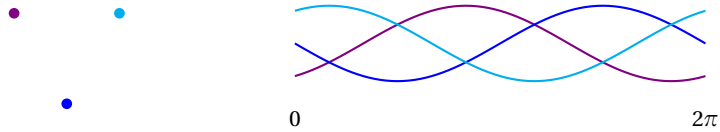

Most order types cannot be realized by points, and in fact, it is NP-hard to decide if a given order type can \cite{shor}. 
Having a notion of combinatorial type allows us to approach questions regarding realizability by bodies rather than points \cite{alfie}.  
The smallest \emph{non-realizable} order type is the Non-Pappus Configuration, a configuration of $9$ elements that violates Pappus's Theorem \cite{levi, ringel2}. 
Pach and T\'{o}th showed that the Non-Pappus Configuration can be realized by an arrangement of segments in the plane \cite{pt3}. 
\fig~\ref{badpenta} shows a non-realizable order type that can be realized by triangles, Goodman and Pollack's ``Bad Pentagon'' \cite{GPallow}, 
and the authors conjecture that this order type cannot be realized by segments. 
In contrast to point sets, we show that any order type, and in fact any combinatorial type, can be realized by an arrangement of bodies.

\begin{figure}[h!] \centering  
\begin{tikzpicture}[scale=.8]
\begin{scope}[scale= -.8]
\clip (0,0) circle (3.2);
\foreach \x in {0,72,...,288}
{
\draw[thin,black!70]
 ({\x-72}:1) .. controls +({\x-18}:1) and +(0,0) .. ({\x-15}:3.2)
 ({\x-72}:1) -- + +({\x-18}:-4)
 (\x:1) .. controls +({\x-18}:1) and +(0,0) .. ({\x-27}:3.2)
 (\x:1) -- +({\x-18}:-5);
}
\foreach \x in {0,72,...,288}
{
\fill[blue]
  (\x:1) circle (2.2pt);
\fill[blue]
  ({\x-21}:3) circle (2.2pt);
}
\end{scope}

\begin{scope}[scale=-0.4, xshift = -20cm]
\foreach \x in {0,...,4}
{\draw[fill=blue!25]
 ({72*\x+27}:0.9) +({72*\x}:-0.4) coordinate (c\x) -- +({72*\x}:0.4)
 coordinate (b\x) -- +({72*\x+90}:-1.2) coordinate (a\x) -- cycle; 
\draw[fill=blue!25]
 ({72*\x-17.5}:8) +({72*\x}:-0.4) -- +({72*\x}:0.4) --
 +({72*\x+90}:-1.2) -- cycle; }
\foreach \i in {0,...,4} 
{\draw[thin, opacity=.2] 
 let \n1={int(mod(\i+2,5))} 
 in ($(b\i)!4!(a\n1)$) -- ($(a\n1)!6!(b\i)$);
\draw[thin, opacity=.2]
 let \n1={int(mod(\i+1,5))} 
 in ($(c\n1)!4!(a\i)$) -- ($(a\i)!6!(c\n1)$);
}
\end{scope}
\end{tikzpicture}
\caption{\ft Two realizations of the Bad Pentagon. \textbf{Left:}
a realization in a topological plane \cite{GPallow}. \textbf{Right:} a realization by
convex sets in the Euclidean plane.} \label{badpenta}
\end{figure}
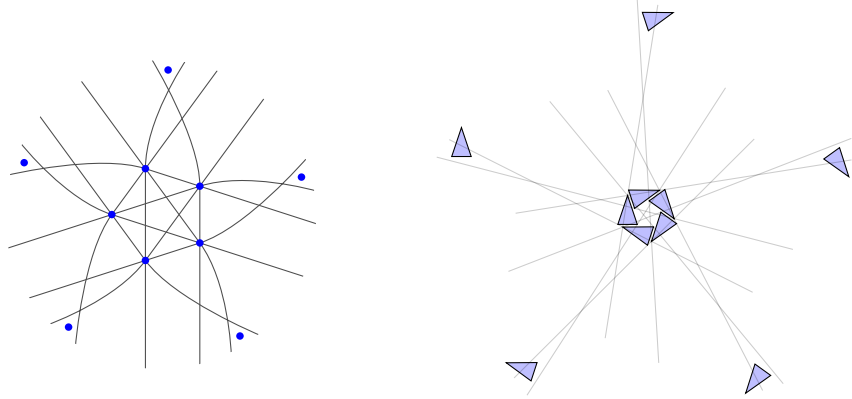

\vbox{
\begin{theorem} \label{orientable} 
The orientations of the triples of any orientable arrangement is a simple order type. 
Two orientable arrangements have the same order type if and only if they have the same combinatorial type.
And, every simple order type can be realized by an orientable arrangement. 
\end{theorem}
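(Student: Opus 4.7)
For the first assertion, we verify the axioms of simple chirotopes (alternation, non-triviality, the Grassmann--Plücker relations, and acyclicity) for the orientation function $\chi$ of an orientable arrangement $\mc A$. Alternation is built into the definition of orientation via cyclic order on the boundary of $\conv(A_i \cup A_j \cup A_k)$, and non-triviality is immediate from orientability. For the Grassmann--Plücker relations and acyclicity, which are conditions on $4$- and $5$-element subsets, the plan is to reduce to the classical point case: given any such subset of $\mc A$, the dual support curves pairwise cross in exactly two transverse points, so by Theorem~\ref{combequivtype} the cylinder subarrangement is equivalent to the dual system of some $4$- or $5$-point configuration with matching triple orientations. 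The axioms then transfer from the point case, where they are verified by determinants.

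For the second assertion, one direction is immediate from Theorem~\ref{combequivtype}: a $+\infty$- and orientation-preserving self-homeomorphism of the cylinder preserves the cyclic boundary data that defines the orientation of each triple, so equal combinatorial types imply equal order types. For the converse, suppose $\mc A$ and $\mc B$ are orientable arrangements with identical orientations on every triple. We build a cylinder homeomorphism carrying $\mc A^*$ to $\mc B^*$ inductively, adding one dual curve at a time: for each new curve $A^*_i$, the locations of its crossings with the previously placed curves---as well as their cyclic order along $A^*_i$---are fully encoded by the orientations of the triples that contain $A_i$, since orientability guarantees that each pair of dual curves meets in exactly two transverse points. The local triple data thus determines the cylinder arrangement up to $+\infty$-preserving homeomorphism, and Theorem~\ref{combequivtype} converts this into equality of combinatorial types.

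For the third assertion, begin with a simple order type $\chi$. By the Folkman--Lawrence Representation Theorem, $\chi$ is realized by an oriented pseudocircle arrangement $\mc S$ on the sphere with some common point $p$ lying to the left of every pseudocircle. Identify the sphere minus $p$ with the cylinder $\mb S^1 \times \mb R^1$ via an orientation-preserving homeomorphism sending $p$ to $+\infty$. Since $p$ is to the left of each oriented pseudocircle, every curve in $\mc S$ becomes a simple closed curve on the cylinder that wraps exactly once counter-clockwise, and is therefore isotopic to the graph of some smooth function $h_i \colon \mb S^1 \to \mb R^1$; perform these isotopies simultaneously, preserving the combinatorial structure and arranging for transverse pairwise crossings. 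Finally, choose a constant $C$ large enough that $h_i + h_i'' + C > 0$ for all $i$; by Remark~\ref{blashke} each $h_i + C$ is then the support function of a smooth convex body $A_i$. Vertical translation does not alter the cylinder arrangement's combinatorial type, so $\mc A = \{A_1, \ldots, A_n\}$ is an orientable arrangement, and its order type (by the previous two parts) coincides with~$\chi$.

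I expect the main obstacle to lie in the third part, specifically in justifying that the Folkman--Lawrence pseudocircles on the cylinder can be simultaneously isotoped into graphs of single-valued smooth functions without altering the combinatorial structure. The choice of $p$ as the deleted point is essential here: the "left of every curve" condition is precisely what forces each pseudocircle to wrap once around the cylinder and hence be graph-like up to isotopy. Making this rigorous requires a careful topological treatment of pseudocircle arrangements on the cylinder together with a controlled smoothing argument before invoking the Blaschke criterion.
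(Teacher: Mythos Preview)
Your third assertion contains a topological error: removing a single point $p$ from $\mb S^2$ yields a space homeomorphic to $\mb R^2$, not to the cylinder $\mb S^1\times\mb R^1$ (the former is simply connected, the latter is not). The paper's proof removes \emph{two} points---one $p_{+\infty}$ to the left of every pseudocircle and one $p_{-\infty}$ to the right of every pseudocircle, the latter existing by acyclicity---and only then obtains a cylinder. The paper also resolves exactly the obstacle you flag at the end (simultaneously isotoping the pseudocircles into graphs of functions) by invoking a sweeping theorem for pseudocircle arrangements due to Goodman and Pollack, which produces the required homeomorphism to the cylinder directly rather than one curve at a time.

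Your first assertion also has a gap. Theorem~\ref{combequivtype} says that two arrangements share a combinatorial type iff their dual systems are related by a cylinder homeomorphism; it does \emph{not} assert that every orientable $4$- or $5$-element subsystem is combinatorially equivalent to the dual of a point configuration. That conclusion would require a separate argument (e.g., stretchability of small pseudoline arrangements), which you have not supplied. The paper avoids axiom-checking altogether: it compactifies the cylinder to $\mb S^2$ by adding points at $\pm\infty$, observes that the dual system of any orientable arrangement becomes a uniform acyclic pseudocircle arrangement on the sphere, and then invokes Folkman--Lawrence, which furnishes the bijection between such arrangements and simple order types in one stroke. This unified route handles all three assertions simultaneously, dispensing with both the axiom verification in your part one and the inductive homeomorphism construction in your part two.
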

}


If we bound the complexity of the bodies, then some simple order types can no longer be realized. 
Indeed, we show that simple order types can always be realized by $k$-gons, but may require $k$ to be arbitrarily large. 

\begin{theorem}\label{k-bounds} 
Let $k_n$ be the smallest integer for which every simple order type on $n$ elements can be realized by an arrangement of $k_n$-gons.
There are constants $c_1,c_2>0$ such that 
\[ c_1 \tfrac{n}{\log n} \leq k_n \leq c_2 n^2. \]
\end{theorem}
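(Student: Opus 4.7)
The proof splits into two independent arguments.

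For the upper bound $k_n \le c_2 n^2$, the plan is to polygonize a smooth orientable realization. By Theorem~\ref{orientable}, any simple order type $\chi$ on $n$ elements is realized by an orientable arrangement $\mc{A} = \{A_1,\dots,A_n\}$ of convex bodies; using Remark~\ref{blashke} I may further assume that each $A_i$ has $C^2$ and strictly convex boundary. Each pair $\{A_i,A_j\}$ has exactly four common tangent lines, giving a set $\Theta_i \subset \mb{S}^1$ of $4(n-1)$ critical outer-normal directions for body $A_i$. Let $P_i$ be the intersection of the supporting half-planes of $A_i$ at a refined set $\widetilde{\Theta}_i \supset \Theta_i$, where the extra sampling directions are chosen so that the Hausdorff distance between $P_i$ and $A_i$ is smaller than a uniform threshold $\varepsilon > 0$ depending only on the minimum transversal separation in $\mc{A}^*$. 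A standard approximation estimate for smooth convex bodies shows that $O(n)$ fill-in samples per inter-critical arc suffice, yielding $|\widetilde{\Theta}_i| = O(n^2)$. Because $P_i \supset A_i$ matches $A_i$ at every critical tangent direction and lies within $\varepsilon$ of it elsewhere, the dual support curves $P_i^*$ form an arrangement on the cylinder isotopic to $\mc{A}^*$ in the sense of Theorem~\ref{combequivtype}, so $\ct(\mc{P}) = \chi$.

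For the lower bound $k_n \ge c_1 n/\log n$, the plan is a classical counting argument. On one hand, standard enumeration in the theory of oriented matroids (e.g.\ Felsner's estimate on simple pseudoline arrangements combined with the Folkman--Lawrence representation) gives at least $2^{c_0 n^2}$ distinct simple order types on $n$ labelled elements, for some absolute $c_0 > 0$. On the other hand, an arrangement of $n$ labelled $k$-gons in the plane is a point in $\mb{R}^{2nk}$, and its combinatorial type is determined by the sign pattern of the orientations of all $O((nk)^3)$ ordered triples of polygon vertices---each a degree-$2$ polynomial in the coordinates. Warren's theorem bounds the number of realizable sign patterns, and hence the number of distinct combinatorial types, by $2^{O(nk\log(nk))}$. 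If every simple order type on $n$ elements is realizable by $k_n$-gons, then $2^{c_0 n^2} \le 2^{O(n k_n \log(n k_n))}$, and combining with the upper bound $k_n \le c_2 n^2$ already established gives $k_n \ge c_1 n/\log n$.

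The main technical obstacle is the uniformity of the polygonal approximation. Replacing each body by a polygon that preserves its own critical tangent lines is routine, but one must verify that in the simultaneous replacement no spurious common tangent lines or crossings appear in the full dual support arrangement on the cylinder, which is ultimately what Theorem~\ref{combequivtype} tests. This forces the fill-in directions in each $\widetilde{\Theta}_i$ to be chosen according to a global modulus of continuity determined by the entire arrangement $\mc{A}$, and it is this uniformity requirement that drives the $O(n^2)$ count per polygon. The $n^2$ is almost certainly not sharp---the authors do not claim that it is---but any attempt to reduce the construction to $O(n)$ vertices per polygon would have to control the topology of $\mc{P}^*$ on the cylinder with much greater delicacy, and the counting argument in the lower bound suggests that the truth should be closer to $n/\log n$.
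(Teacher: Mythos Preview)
Your lower bound is essentially the paper's argument: both compare the Felsner--Valtr count $2^{\Omega(n^2)}$ of simple order types against an upper bound of $2^{O(nk\log(nk))}$ on combinatorial types realizable by $k$-gons. The paper reaches the latter by observing $t_k(n)\le t(kn)$ and citing Goodman--Pollack (Theorem~\ref{ordertype:count}), whereas you invoke Warren's theorem directly; these are the same argument in slightly different packaging.

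Your upper bound, however, takes a different route from the paper and has a real gap. The paper does not polygonize a pre-existing smooth realization. Instead it proves Theorem~\ref{N-gons}: given any combinatorial type with $N$ common supporting tangents, it \emph{constructs} a realization by $N$-gons directly, by wrapping a wiring diagram of the swap sequence around a large circle (points $p_{i,t}=(2\pi t/N;\, r+\pi_t^{-1}(i))$ in polar coordinates, $A_i=\conv\{p_{i,t}\}$). For orientable types $N=n(n-1)$, giving $k_n\le n(n-1)$ immediately.

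The gap in your approach is the sentence ``a standard approximation estimate \dots\ shows that $O(n)$ fill-in samples per inter-critical arc suffice.'' The number of supporting half-planes needed to $\varepsilon$-approximate a $C^2$ body depends on the curvature of the body and on $\varepsilon$, and both of these are determined by the particular smooth realization $\mc A$ you started with, not by $n$. Nothing in Theorem~\ref{orientable} or Remark~\ref{blashke} controls the curvatures of the bodies or the minimum transversal separation in $\mc A^*$ in terms of $n$, so your vertex count is really $O_{\mc A}(1)$ per arc, not $O(n)$. To turn your strategy into a proof you would first have to exhibit a smooth realization with quantitatively controlled geometry; the paper's explicit wiring-diagram construction bypasses this entirely. (A minor point: for an orientable pair the dual curves cross twice, not four times, so each body has $2(n-1)$ critical directions in the paper's sense; this does not affect the asymptotics.)
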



\paragraph{Realization spaces.}

An old conjecture of Ringel claimed that given two point sets with the same order type, one point set can be continuously deformed to the other while maintaining the order type \cite{ringel2}. 
This naturally leads to the study of {\em realization spaces} of order types, the set of all families of points with a fixed order type modulo projectivities. 
The conjecture can then be restated as, any non-empty realization space is connected. 
Ringel's conjecture was disproved in the early eighties, and 
the strongest result in this direction is Mnev's Universality Theorem \cite{mnev1, mnev2}, which states
that for any primary semialgebraic set $\mc Z$, 
there exists an order type whose realization space is homotopy equivalent to $\mc Z$. 
Recall that a primary semialgebraic set is the set of common solutions to some finite list of polynomial equations and strict inequalities in several real variables.
This has lead to a growing body of work 
\cite{OMS, milson, padrol2014delaunay, gebertReal, tsukamoto2013new, vakil}.

The main objective of this paper is to extend the study of realization spaces to arrangements of bodies of a fixed combinatorial type and exhibit a trade-off between the combinatorial complexity of the bodies and the topological complexity of their realization space. 
The first indication of this trade-off may be observed from Theorems~\ref{orientable} and~\ref{k-bounds}, which imply that for general convex bodies the realization space of any simple order type is non-empty, but this fails for $k$-gons. 
We prove two contrasting results. First, we show in Theorem~\ref{realizations} that Ringel's intuition is correct in this generalized context: the realization space of any combinatorial type satisfying some mild assumptions is contractible; that is, it is non-empty and has no holes of any dimension.  In particular, the set of arrangements (modulo planar rotations) of convex bodies realizing any fixed simple order type is contractible, and therefore connected. 
Second, we show in Theorem~\ref{kgonrs} that if the bodies are restricted to polygons with at most $k$ vertices, then Mnev's Theorem generalizes.\footnote{Note that Mnev's Theorem is more specific as it
deals with stable equivalence.}  Specifically, we show that for every $k$ and every primary semialgebraic set $\mc Z$ there is a combinatorial type whose $k$-gon realization space is homotopy equivalent to $\mc Z$. 

The proof of Theorem~\ref{realizations} provides an explicit deformation retraction to a standard arrangement, which will be defined for each combinatorial type.  The proof of Theorem~\ref{kgonrs} depends on Mnev's Theorem.  Specifically, the proof uses a reduction from the case of $k$-gons to the case of points by constructing an arrangement of $k$-gons where the only obstruction to realizability is that certain vertices must always have the same fixed order type.

\paragraph{Relationship to double pseudoline arrangements.} \label{2remarks}

Pocchiola and Habert introduced an extension of chirotopes to arrangements of convex sets based on a similar notion of duality to what is presented here, called double pseudoline arrangements \cite{habert}.  The essential difference is that the dual double pseudoline of a convex set is defined as the quotient of the dual support curve by the $\mb{Z}_2$ action on the cylinder $(\theta,y) \sim (\theta,-y)$.  Instead of a curve that wraps monotonically once around the cylinder, the dual double pseudoline is a curve that wraps monotonically twice around the Möbius strip. 
This leads to an extended notation of chirotopes that provides information about arrangements of convex sets which combinatorial types do not distinguish, such as disjointness and visibility. 
On the other hand, combinatorial types distinguish convex position of subarrangements and are simpler in certain respects that are crucial to the analysis in \cite{dhh1,dhh2} and the results of this paper.

\paragraph{Organization of the paper.}
Section \ref{prelims} gives definitions, states the main theorems of the paper, and clarifies some issues that were treated vaguely in the introduction. 
Section \ref{counting} deals with realizing order types, Theorems~\ref{orientable} and~\ref{k-bounds}; this section conveys the overall theme of the paper, but in a less technical setting, and involves ideas similar to what will be used in the rest of the paper.  
Section \ref{contract} proves contractibility, Theorem~\ref{realizations}.  
Section \ref{topinv} proves the topological invariance of combinatorial type, Theorem~\ref{combequivtype}.  While Theorem~\ref{combequivtype} is a more fundamental result, it appears later in the text as it depends crucially on Theorem~\ref{realizations}.  
Section \ref{universe} gives the universality construction for convex $k$-gons, Theorem~\ref{kgonrs}.  Finally, Section \ref{conclude} ends with some remarks and open problems.

\section{Preliminaries and main theorems} \label{prelims}

\paragraph{Genericity assumptions.} \label{sect:generic} 

A \df{common supporting tangent} of a pair of 
bodies is a {\em directed line} tangent to each body such that both
bodies are on its {\em left} side. In the dual, this corresponds to an intersection between two support curves.
We say that a pair of bodies intersect \df{transversally} when no point of intersection is contained in a common supporting tangent. In the dual this corresponds to a pair of curves in the cylinder that cross at each point of intersection; that is, for a pair of curves that are respectively the graphs of functions $f_1, f_2$, the function $f_1-f_2$ has only isolated zeros and changes sign at each zero. 
By an \df{arrangement} we mean a {\em finite} {\em indexed} non-empty collection of compact convex sets, which we call {\em bodies}, that  satisfy following genericity conditions: 
\begin{itemize} 
\item Each pair of bodies intersect transversally.
\item No three bodies share a common supporting tangent.
\item There are finitely many common supporting tangents. 
\end{itemize}
Analogously, by a \df{system} we mean a finite indexed collection of closed curves in the cylinder $\mb{S}^1 \times \mb{R}^1$ that are monotonic in the first coordinate and satisfy following genericity conditions: 
\begin{itemize}
\item Each pair of curves cross at each point of intersection.
\item No three curves share a common point of intersection. 
\item There are finitely many crossings.
\end{itemize}


\paragraph{Combinatorial type.}\label{ctdef}
Let $\sym_m$ be the symmetric group on $m$ elements and $[m] = \{1,\dots,m\}$. 
Given $i \in [m-1]$, the \df{adjacent transposition}  $\tau_i \in \sym_m$ is the permutation interchanging the $i$'th and $i{+}1$'st entries, 
\[ \tau_i(x_1,\dots,x_m) \ = \ (x_1,\dots,x_{i-1},x_{i+1},x_{i},x_{i+2},\dots,x_m). \]

Let $H(\tau_i) = i$ denote the height of an adjacent transposition. 
A \df{swap sequence} $\sigma : [N] \to \sym_m$ is any sequence of adjacent transpositions such that $\sigma_N\circ \dots \circ\sigma_1$ is the identity permutation.

Fix an index set $\indx$ of size $n$.  A \df{swap pair} $(\rho,\sigma)$ on $\indx$ is a bijection $\rho : [n] \to \indx$ together with a swap sequence $\sigma : [N] \to \sym_n$.
We define an equivalence relation $(\swapeq)$ on swap pairs as follows. 
Let $(\rho',\sigma') \swapeq (\rho,\sigma)$ when $(\rho',\sigma')$ can be obtained from $(\rho,\sigma)$ by performing any sequence of the following two \df{elementary operations} \newline
\vbox{
\begin{itemize}
\item a \df{cyclic shift}
\[ \rho' = \tau_{\sigma_1}(\rho), \quad \sigma\mathrlap{'}_i = \sigma_{(i{+}1 \ \text{mod} \ N)} \]
\item an \df{independent transposition}
\[\rho' = \rho, \quad \sigma' = \tau_i(\sigma) \quad \text{provided} \quad |H(\sigma_i)-H(\sigma_{i{+}1})|>1. \] 
\end{itemize}
}
A \df{combinatorial type} $\coty$ on $\indx$ is the equivalence class 
$\coty = \{(\rho',\sigma'): (\rho',\sigma') \swapeq (\rho,\sigma)\}$ 
of a swap pair $(\rho,\sigma)$.

To define the combinatorial type of a system $\mc S$, 
we order the crossings of $\mc S$ lexicographically in $\mb S^1 \times \mb R^1$ where $\mb S^1$ is ordered according to the standard parametrization by the half-open interval $(0,2\pi]$. 
Let $\rho$ be the order of the indices of each curve from bottom to top before the first crossing of the system.
Let $\sigma$ be the swap sequence corresponding to each crossing. 
That is, let $H(\sigma_i)$ be 1 plus the number of curves below the $i$'th crossing of $\mc S$.  
Observe that the sequence $\sigma_i\circ \dots \sigma_1(\rho)$ for $i=0,1,\dots,N$ records the order of the curves in a sweep of the cylinder.
The \df{combinatorial type} $\ct(\mc S)$ of a system $\mc S$ is the equivalence class of its swap pair $(\rho,\sigma)$. 
The combinatorial type of an arrangement $\mc A$ is that of its dual system, and by slight abuse of notation, we write $\ct(\mc A) = \ct(\mc A^*)$.

\begin{figure}[h]
\centering

\begin{tikzpicture}

\begin{scope}[yscale=.5,xscale=.7]

\draw[thick,cyan]
(0,4) node[left] {\ft $d$}
\g{3} \ds \du \ue
;

\draw[thick,blue]
(0,3) node[left] {\ft $c$}
\g{1} \ds \du \ug{1} \ud \de
;

\draw[thick,violet]
(0,2) node[left] {\ft $b$}
\ds \de \g{2} \us \ue
;

\draw[thick,black]
(0,1) node[left] {\ft $a$}
\us \ug{1} \ud \de \ds \de
;

\node at (0,0) {\ft $0$};
\node at (6,0) {\ft $2\pi$};

\end{scope}

\node 
at (9,1.25) 
{
$\begin{array}{r@{\ }c@{\ }l}
\rho &=& (a,b,c,d) \\
H(\sigma) &=& (1,2,2,3,1,3) 
\\
\overline{\rho\sigma} &=& ((b,a),(c,a),(a,c),(d,c),(a,b),(c,d))
\end{array}$
};

\end{tikzpicture}
\caption{\ft
A system with its swap pair $(\rho,\sigma)$ and its incidence sequence $\overline{\rho\sigma}$.\newline
Note that systems are drawn as viewed from outside the cylinder, so counter-clockwise is to the right.
}\label{eg_comb_type}
\end{figure}
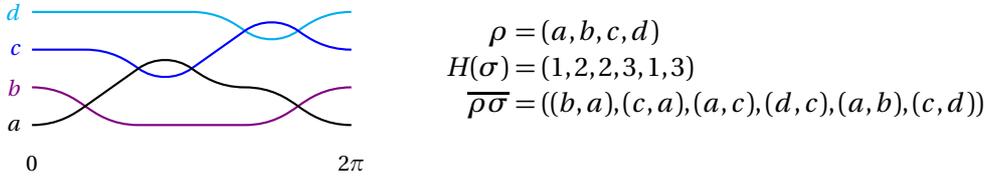

The \df{incidence sequence} $\overline{\rho\sigma} : [N] \to \indx^2$ of a swap pair $(\rho,\sigma)$ records the ordered pair of indices transposed by the action of each swap, 
\[ \overline{\rho\sigma}{}_i = (x_{H(\sigma_i)+1},x_{H(\sigma_i)}) \quad \text{where} \quad 
x = \sigma_{i-1}\circ \cdots \circ \sigma_1(\rho).\]
Note that the incidence sequence of equivalent swap pairs have the same multi-set of entries. 

The \df{layers} of a system are the connected components of the union of curves of the system. 
Analogously, the layers of a combinatorial type are the connected components of the graph on $\indx$ defined by its incidence sequence. 
The \df{depth} of a combinatorial type is the number of layers excluding isolated vertices, and the depth 1 case is called \df{non-layered}.  

\begin{remark}
Orientable combinatorial types are always non-layered.
\end{remark}

\paragraph{Realization spaces.} \label{realize this}

The \df{full realization space} $\fullrs(\coty)$ of a combinatorial type $\coty$ is defined by
\[ \fullrs(\coty) := \{\mc A\in\mc U_\indx: \ct(\mc A) = \coty\} \]
where $\mc U_\indx$ is the set of all arrangements of bodies indexed by $\indx$.  
The Hausdorff metric $d_H$ on compact subsets of 
$\mb{R}^2$ induces a metric on $\fullrs(\coty)$ by taking the maximum distance between bodies having the same index. 
That is, for ${\mc A} = \{A_{ i}\}_{ i\in \indx}$ and  ${\mc B} = \{B_{ i}\}_{ i\in \indx}$, 
\[d({\mc A}, {\mc B}) = \max_{  i \in \indx} d_H(A_{ i}, B_{ i}) \] 

\begin{remark}
The map that takes a convex body to its support function is an isometry from the space of convex bodies with the Hausdorff metric to the space of support functions on $\mb{S}^1$ with the supremum metric. 
\end{remark}

Depending on context, it may be convenient to regard realizations of a fixed combinatorial type as ``the same'' when they are related by a projective transformation. 
Let $\mc A \projeq \mc B$ when they are related by an admissible projectivity; that is, an invertible projective transformation $\pi$ such that $\pi(A_{ i}) = B_{ i}$ for all $ i \in \indx$ and $\pi$ is bounded and preserves orientation on the convex hull of ~$\bigcup \mc A$.  The \df{(projective) realization space}, which we may simply call the ``realization space'', is the quotient space 
\[\projrs(\coty) := \fullrs(\coty)/\projeq .\]

By a $k$-gon we mean a convex polygon with {\em at most} $k$ vertices.  The \df{full $k$-gon realization space} is given by
\[ \fullrs_k(\coty) := \{\mc A\in \fullrs(\coty): \forall  i\in \indx.\ A_{ i} \text{ is a $k$-gon}\} \] 
Similarly, we have the \df{(projective) $k$-gon realization space} $\projrs_k(\coty):= \fullrs_k(\coty)/\projeq$. 

\begin{remark}\label{circprojrs}
By considering the direction of some line $\ell$ passing through an arrangement $\mc A$, the admissible projectivities of $\mc A$ factor into a contractible set of projective transformations fixing the direction of $\ell$ and rotations of the plane $SO(2)$.  As such, $\fullrs(\kappa)$ is homotopic to $\projrs(\kappa) \times \mb{S}^1$.
\end{remark}

We now state the main theorems on realization space of arrangements of convex bodies. 

\begin{theorem}\label{realizations}
If ~$\Omega$ is a non-layered combinatorial type (in particular, if ~$\Omega$ corresponds to a simple order types),
then its realization space~$\projrs(\coty)$ is contractible. 
Moreover, if ~$\coty$ has depth $d > 1$, then ~$\projrs(\coty)$ is homotopic to a product of $d{-}1$ circles. 
\end{theorem}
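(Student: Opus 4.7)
The plan is to construct, for each combinatorial type $\Omega$, an explicit deformation retraction of $\fullrs(\Omega)$ onto a canonical subspace of standard realizations, from which $\projrs(\Omega)$ is obtained via the $SO(2)$ quotient of Remark~\ref{circprojrs}.

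First, fix a representative swap pair $(\rho, \sigma)$ of $\Omega$ and build an explicit \emph{standard dual support system} $\mc{S}_0$: place the $i$'th crossing at angular position $\theta_i = 2\pi i / (N{+}1)$ and height $H(\sigma_i)$, connect consecutive crossings along each curve by a smooth interpolation, and add a large additive constant so that Blashke's criterion of Remark~\ref{blashke} ensures each curve is the dual support curve of a convex body. This gives a standard arrangement $\mc{A}_0$ realizing $\Omega$, unique up to rotation of the cylinder.

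The deformation retraction proceeds in two stages. Given $\mc{A} \in \fullrs(\Omega)$ with dual support system $\{f_1, \dots, f_n\}$, label the crossings of $\mc{A}^*$ in the cyclic order prescribed by $(\rho, \sigma)$ and let $\theta_i(\mc{A})$ denote their angular positions. In \emph{stage one}, continuously modify each $f_j$ by small $C^2$-bounded bumps so that the crossings migrate monotonically from $\theta_i(\mc{A})$ to the standard positions $2\pi i/(N{+}1)$; smallness of the bumps relative to the $C^0$-gaps between curves on each inter-crossing interval, and to the excess in the Blashke inequality, guarantees that no extra crossings appear and the convexity condition is preserved. In \emph{stage two}, linearly interpolate each $f_j$ to the corresponding curve $g_j$ of $\mc{S}_0$. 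After stage one, each pair of crossings of $\mc{A}^*$ and of $\mc{S}_0$ is $\theta$-aligned, so for every pair $(i,j)$ the signs of $f_i - f_j$ and $g_i - g_j$ agree on every inter-crossing interval; linear interpolation then preserves the sign pattern, no new crossings appear, and the combinatorial type is maintained. The Blashke condition $f_j^t + (f_j^t)'' = (1{-}t)(f_j + f_j'') + t(g_j + g_j'') > 0$ is preserved as a positive linear combination.

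For a non-layered $\Omega$, the above retraction contracts $\fullrs(\Omega)$ onto the rotation-orbit of $\mc{A}_0$ --- a single circle --- so Remark~\ref{circprojrs} gives that $\projrs(\Omega)$ is contractible. For depth $d > 1$, the layers of any realization occupy pairwise disjoint vertical bands of the cylinder (since two curves that never cross are vertically ordered), so the retraction can be applied independently to each layer using its own rotation parameter, yielding $\fullrs(\Omega) \simeq (\mb{S}^1)^d$; the global $SO(2)$ factor of Remark~\ref{circprojrs} then kills one circle, giving $\projrs(\Omega) \simeq (\mb{S}^1)^{d-1}$. The hard part will be making stage one precise: the bumps must be chosen continuously in $\mc{A}$, small enough to avoid creating spurious crossings or violating convexity, and compatible with the monotone migration of all $N$ crossings simultaneously. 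This calls for partitions of unity in a neighborhood of each crossing, together with a consistent continuous labeling of the crossings of $\mc{A}$ by the swap-pair data; the latter is exactly where the definition of combinatorial type through the equivalence on swap pairs (cyclic shifts and independent transpositions) plays its essential role, matching crossings between layers only up to the per-layer rotation ambiguity that produces the $(\mb{S}^1)^d$ factor.
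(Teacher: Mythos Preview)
Your overall architecture is right and reaches the same conclusion as the paper, but you carry out the two main steps in the reverse order, and this makes your acknowledged ``hard part'' harder than it needs to be.

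The paper first performs the linear interpolation (your stage two) and only afterward moves the crossings (your stage one). The key observation is Remark~\ref{mink}: Minkowski interpolation $\mc A_t = (1-t)\mc A + t\,\alpha(\projrsvc(\mc A))$ between an arrangement and a canonical arrangement built from \emph{its own} crossing data keeps every crossing fixed in place throughout, so this linear step already retracts $\fullrs(\Omega)$ onto a subspace homeomorphic to the finite-dimensional \emph{support configuration space} $\vcrs(\Omega)$ of labeled crossing positions on $\mb S^1$ (Lemma~\ref{lemcontract1}). Moving crossings to standard positions (Lemma~\ref{lemcontract2}) is then a problem about rotating labeled points on a circle---no bumps on support functions, no Blaschke inequality to nurse, no spurious-crossing control. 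Your stage-two justification (``signs of $f_i-f_j$ and $g_i-g_j$ agree on each inter-crossing interval'') is exactly the paper's Lemma~\ref{lemcontract1} argument; the paper simply applies it toward $\alpha(\projrsvc(\mc A))$ rather than toward a globally fixed $\mc S_0$, which buys alignment for free.

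Doing the crossing migration first, as you propose, is plausible but genuinely more delicate: the bump construction must be continuous in $\mc A$, simultaneous for all $N$ crossings, and respect the fact that crossings with disjoint labels can legitimately swap $\theta$-order (independent transpositions), so ``monotone migration to $2\pi i/(N{+}1)$'' is not even well posed until you have resolved the continuous-labeling issue you flag at the end. The paper handles that labeling via a lexicographically minimal tableau and a canonical fixed crossing $\tilde v$, but crucially only \emph{after} reducing to $\vcrs(\Omega)$. For the layered case your picture is correct, but ``apply the retraction independently to each layer with its own rotation parameter'' again only makes literal sense once you are in $\vcrs(\Omega)=\vcrs(\Omega_1)\times\cdots\times\vcrs(\Omega_d)$; in $\fullrs(\Omega)$ the layers still live in a common plane.
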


\begin{theorem}\label{kgonrs}
For every primary semialgebraic set ~$\mc Z$ and every positive integer $k$, there exists a combinatorial type ~$\coty$ such that its $k$-gon realization space~$\projrs_k(\coty)$ is homotopic to ~$\mc Z$. 
\end{theorem}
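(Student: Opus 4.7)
The plan is to reduce Theorem~\ref{kgonrs} to Mnev's Universality Theorem. Given a primary semialgebraic set $\mc{Z}$, Mnev's theorem provides a simple order type $\chi$ on $n$ points whose projective realization space is homotopy equivalent to $\mc{Z}$. The goal is to construct a combinatorial type $\coty$ whose $k$-gon realization space $\projrs_k(\coty)$ is homotopy equivalent to the projective realization space of $\chi$, and hence to $\mc{Z}$.

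To build $\coty$, I would first fix a concrete realization of $\chi$ by points $p_1,\ldots,p_n$ and replace each $p_i$ by a small $k$-gon $A_i$, all with the same fixed shape and scale chosen so that the $A_i$ are pairwise disjoint and far apart relative to their diameters. When the $A_i$ are small enough, the cyclic order of any triple $(A_i,A_j,A_\ell)$ around the boundary of $\conv(A_i \cup A_j \cup A_\ell)$ coincides with the orientation of $(p_i,p_j,p_\ell)$. Taking $\coty$ to be the combinatorial type of this concrete arrangement, Theorem~\ref{orientable} identifies the order type associated to $\coty$ with $\chi$. If additional rigidity is needed, one augments the construction with auxiliary ``marker'' $k$-gons so that a specific vertex $v_i$ of each $A_i$ (say, the one supporting a common tangent with a prescribed marker) is canonically identifiable from the combinatorial data alone.

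Next I would analyze the forgetful map $\Phi \colon \projrs_k(\coty) \to \projrs(\chi)$ sending a $k$-gon realization to the tuple of designated vertices $(v_1,\ldots,v_n)$, modulo admissible projectivity. By Theorem~\ref{orientable} applied to the orientable subarrangement $\{A_i\}$, the image of $\Phi$ is contained in the realization space of $\chi$. Conversely, any point realization of $\chi$ inflates to a $k$-gon realization of $\coty$ by placing sufficiently small $k$-gons of the reference shape at each point, so $\Phi$ is surjective on realizations. To upgrade this to a homotopy equivalence, I would exhibit an explicit deformation retraction of $\projrs_k(\coty)$ onto a section of $\Phi$ in which the non-designated vertices and the markers sit in a standard position, in direct analogy with the retraction constructed in the proof of Theorem~\ref{realizations}: continuously shrink each $A_i$ about $v_i$ and slide the non-designated vertices to their reference positions, while avoiding the tangencies recorded in $\coty$.

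The main obstacle is ensuring the vertex-identification is robust across \emph{every} $k$-gon realization of $\coty$, not merely for arrangements close to the reference. One must design the combinatorial type so that, in any realization, the distinguished vertex $v_i$ of $A_i$ can be unambiguously read off from the combinatorial data, and the triple orientation of the $v_i$ is forced to equal $\chi$. This likely requires carefully designed ``gadget'' markers in the spirit of the Shor--Mnev constructions, together with a contractibility argument for the non-designated vertex parameters that interacts cleanly with the projective quotient. A secondary issue is verifying that the reference $k$-gon shape can always be chosen generically enough to avoid spurious collinearities or extra tangencies in the dual picture that would change the combinatorial type.
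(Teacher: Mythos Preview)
Your proposal has a genuine gap at exactly the point you flag as ``the main obstacle,'' and the sketch you give does not close it.

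If you take small congruent $k$-gons centered at a Mnev point configuration, the resulting combinatorial type is simply the \emph{orientable} type corresponding to $\chi$: each pair of dual curves crosses only twice, so $\coty$ records nothing about vertices.  Any $k$-gon arrangement with order type $\chi$ realizes this $\coty$, and your map $\Phi$ is not well defined from the combinatorial data.  You propose to repair this with unspecified ``marker'' bodies, but that is precisely the construction that has to be supplied; invoking Theorem~\ref{orientable} only tells you the \emph{bodies} have order type $\chi$, not that any chosen vertices do.  Moreover, even if one vertex $v_i$ per body were pinned down, the remaining $k-1$ vertices of each $A_i$ would still be free, and your ``shrink $A_i$ about $v_i$'' retraction does not preserve $\coty$: shrinking a polygon toward a vertex moves its supporting tangents and generically destroys the recorded crossings.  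This is not analogous to the retraction in Theorem~\ref{realizations}, which works by Minkowski-convex interpolation of support functions with the \emph{same} support configuration.

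The paper takes a different route that avoids any fiber-contractibility argument.  It builds $\coty$ so that each pair of dual curves crosses exactly $2k$ times; this forces every body in any $k$-gon realization to have exactly $k$ vertices, and the vertices of $A^1$ and $A^t$ alternate in convex position, giving a canonical labeling of all $kn$ vertices (made unique by choosing $k$ order types $\chi_1,\dots,\chi_k$ in a cyclically aperiodic pattern).  The construction then shows that these $kn$ labeled vertices must realize a single fixed order type $\chi$ built from the $\chi_i$, yielding a \emph{homeomorphism} $\fullrs_k(\coty)\cong\fullrs_1(\chi)$ rather than a mere homotopy equivalence.  Taking $\chi_1$ to be Mnev's order type and $\chi_2,\dots,\chi_k$ to be convex position (contractible realization space) then gives $\projrs_k(\coty)\simeq\projrs_1(\chi_1)\simeq\mc Z$.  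The key idea you are missing is to make the combinatorial type rich enough to identify \emph{every} vertex, not just one per body.
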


\section{Realizing order types} \label{counting}

\paragraph{Orientability and order types.}

We start by showing a bijective correspondence between orientable arrangements and simple order types.  

\begin{proof}[Proof of Theorem \ref{orientable}]
the Folkman-Lawrence Representation Theorem gives a bijective correspondence between simple order types $\chi$ and equivalence classes of uniform acyclic pseudocircle arrangements on the sphere $\mb{S}^2$.  
We extend this bijection to equivalence classes of orientable arrangements by mapping the dual support systems to pseudocircle arrangements.
Note that a positive orientation on the sphere is chosen; if we were to forget this orientation, then each equivalence class of pseudocircle arrangements would correspond to the pair $\{\chi,-\chi\}$.

Let $\mc A = \{A_1,\dots,A_n\}$ be an orientable arrangement.  Let $\phi : \mb{S}^1 \times \mb{R}^1 \to \mb{S}^2$ be the compactification of the cylinder defined by adding one point $p_{+\infty}$ at $+\infty$ and another point $p_{-\infty}$ at $-\infty$ where ${(\theta,y) \to p_{\pm\infty} \in \mb{S}^2}$ as ${y \to \pm\infty}$. 
The image of the dual support system $\phi(\mc A^*) = \{\phi(A_1^*),\dots,\phi(A_n^*)\}$ is now a uniform acyclic pseudocircle arrangement; $\phi(\mc A^*)$ is a uniform pseudocircle arrangement by the genericity assumptions and $\phi(\mc A^*)$ is acyclic since the region of cylinder that is above every curve of $\mc A^*$ is now to the left of every curve of $\phi(\mc A^*)$.

For the other direction, let $\mc S = \{S_1,\dots,S_n\}$ be a uniform acyclic pseudocircle arrangement, and let $p_{+\infty}$ be a point to the left of each curve and $p_{-\infty}$ be a point to the right of each curve. 
Pseudocircle arrangements can be swept, meaning a path $\gamma_t$ from the point $p_{-\infty}$ to the point $p_{+\infty}$ can be continuously deformed while keeping its end-points fixed such that it  passes through all other points on the sphere exactly once returning to its original position and it always intersects each pseudocircle at exactly one point \cite[Theorem~2.9]{GPsemi}. 
This defines a homeomorphism from ${\mb{S}^2 \setminus \{ p_{+\infty}, p_{-\infty}\}}$ to the cylinder such that the image of each pseudocircle is the graph of a function $f_i : \mb{S}^1 \to \mb{R}^1$.  Each of the $f_i$ can then be approximated by a smooth function $h_i$ while preserving their order above each point on the circle, 
$\forall \theta \in \mb{S}^1.\ f_{i_1}(\theta) \leq \dots \leq f_{i_n}(\theta)\ \Leftrightarrow \ h_{i_1}(\theta) \leq \dots \leq h_{i_n}(\theta)$.
By Remark~\ref{blashke}, we may now assume the $h_i$ are support functions of an arrangement $\mc A$.  With this, $\phi(\mc A^*)$ is a pseudocircle arrangement of the same order type as $\mc S$.

An orientable arrangement $\mc A$ now has a simple order type given by $\phi(A^*)$, and the orientations of a triple $A_i,A_j,A_k$ are given by the order $\phi(A_i^*),\phi(A_j^*),\phi(A_k^*)$ appear on the boundary of the region to the left of each curve.  
By Theorem~\ref{combequivtype} two orientable arrangements have the same combinatorial type if and only if they have the same order type.  And, every simple order type can be realized by an orientable arrangement. 
\end{proof}

\paragraph{Counting arrangements of polygons.} 
Here we give an upper bound on the number of combinatorial types that can be realized by $k$-gons, and bounds on the value of $k_n$ needed to realize all simple order types on $[n]$ by $k_n$-gons.
Our results are based on the following. 

\begin{theorem}[Goodman and Pollack \cite{gp-upper}] \label{ordertype:count}
Let $t(n)$ denote the number of distinct order types (not necessarily simple) on $[n]$ that can be realized by points. For some constants $c_1$, $c_2$, \[ 2^{4n\log n +c_1 n}  \leq t(n)  \leq 2^{4n \log n + c_2 n} . \]
\end{theorem}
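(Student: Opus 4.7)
The plan is to prove the upper and lower bounds using complementary tools: a Milnor--Thom--Warren-style bound on the number of sign patterns of real polynomials for the upper bound, and an explicit perturbation construction for the lower bound.

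For the upper bound, I would identify the space of $n$-point configurations with $\mathbb{R}^{2n}$ and observe that the order type of $\{(x_i,y_i)\}_{i=1}^n$ is determined by the sign vector of the $\binom{n}{3}$ orientation polynomials
\begin{equation*}
D_{ijk} \;=\; \det \begin{pmatrix} 1 & x_i & y_i \\ 1 & x_j & y_j \\ 1 & x_k & y_k \end{pmatrix},
\end{equation*}
each of which has total degree $2$ in the $2n$ variables since one column is constant. Hence $t(n)$ is at most the number of $\{+,0,-\}$-sign patterns realized by these polynomials on $\mathbb{R}^{2n}$. Warren's theorem, strengthened to count sign patterns with zeros via the Milnor--Thom bound on Betti numbers of real varieties, gives a bound of the form $\left(\tfrac{C m d}{N}\right)^N$ where $m$ is the number of polynomials, $d$ their degree, and $N$ the ambient dimension. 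Substituting $m = \binom{n}{3} \le n^3/6$, $d=2$, and $N = 2n$ yields $(C' n^2)^{2n} = 2^{4n\log n + O(n)}$.

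For the lower bound, I would construct an explicit family of at least $2^{4n\log n - O(n)}$ configurations with pairwise distinct order types. Starting from $n$ reference points on the moment curve (which guarantees convex position and, more importantly, a quantitative lower bound on each $|D_{ijk}|$), I would carve out around each reference point $p_i$ a tiny neighborhood subdivided into $\Theta(n^4)$ cells by the images of lines spanned by pairs of other reference points, rescaled to $p_i$'s neighborhood. Each cell is distinguished by the sign of at least one orientation $D_{ijk}$, and by letting the neighborhood radii decay geometrically from $p_1$ to $p_n$, the choice at $p_i$ cannot be erased by later perturbations. This gives $(n^4)^n = 2^{4n\log n}$ distinct order types.

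The main obstacle is the lower bound: one must verify that the $\Theta(n^4)$ cells in each neighborhood really do correspond to distinct orientation signs, and that the $n$ independent perturbations do not conspire to collapse different cell assignments to the same order type. This will require a Vandermonde-style quantitative estimate along the moment curve, controlling how far each $D_{ijk}$ is from vanishing at the unperturbed configuration, together with an inductive scale argument to enforce independence across the points. The upper bound, by contrast, is a direct application of Warren's theorem once the algebraic framing above is in place.
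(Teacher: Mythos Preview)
The paper does not prove this statement at all: Theorem~\ref{ordertype:count} is quoted as a result of Goodman and Pollack and used as a black box in the proof of Theorem~\ref{k-real}. There is therefore nothing in the paper to compare your proposal against.

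That said, your outline is essentially the standard proof. The upper bound via Warren/Milnor--Thom is exactly how Goodman and Pollack argue, and your parameter count is correct (degree~$2$, $2n$ variables, $\binom{n}{3}$ polynomials). For the lower bound, your moment-curve-plus-local-perturbation sketch is in the right spirit but is not quite the original argument: Goodman and Pollack instead take $n$ points in general position and count, for each point $p_i$, the number of cells in the arrangement of the $\binom{n-1}{2}$ lines through the remaining points; moving $p_i$ across any such line changes the order type, and a product argument over the $n$ points gives the $n^{4n}$ bound. Your version would work too, but the ``$\Theta(n^4)$ cells per neighborhood'' claim and the independence-across-scales argument both need to be made precise; the cleanest route is to drop the moment curve and the rescaling, and just count line-arrangement cells directly as Goodman and Pollack do.
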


\begin{theorem}[Felsner and Valtr \cite{fels-valt}] \label{abs:count}
Let $t_\text{ot}(n)$ denote the number of distinct simple order types on $[n]$. \[ 2^{0.188 n^2} \leq t_\text{ot}(n) \leq 2^{0.6571 n^2} . \]
\end{theorem}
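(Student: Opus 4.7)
The plan is to prove the two bounds separately by combining a structural bijection with an explicit construction. On the upper bound side, the key observation is that by the Folkman--Lawrence Representation Theorem (as used in the proof of Theorem~\ref{orientable}), a simple order type on $[n]$ corresponds bijectively to a $\swapeq$-equivalence class of swap pairs $(\rho,\sigma)$ in which $\sigma$ has length exactly $\binom{n}{2}$ and each unordered pair of indices appears in $\overline{\rho\sigma}$ exactly once --- equivalently, reduced words for the longest element of $\sym_n$, up to commutation of independent transpositions. Thus $t_\text{ot}(n)$ is upper bounded by the number of such reduced swap sequences. A crude bound is $n!\cdot (n-1)^{\binom{n}{2}}$, but this gives exponent $O(n^2 \log n)$, matching Goodman--Pollack's Theorem~\ref{ordertype:count}. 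To push the exponent down to a constant times $n^2$, my plan is to exploit that at step $i$ the allowed heights $H(\sigma_i)$ are severely constrained --- the two elements being swapped must not have been swapped before --- and to use an entropy or double-counting argument averaging the number of valid transitions along a typical sweep. This should yield the constant $0.6571$.

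For the lower bound, my plan is to construct explicitly a family of $2^{\Omega(n^2)}$ point sets in general position realizing pairwise distinct simple order types. I would begin with a highly degenerate ``grid-like'' seed configuration consisting of $\Theta(n)$ nearly-collinear rows of $\Theta(n)$ points each, so that $\Theta(n^2)$ triples are nearly collinear. Each such triple can be perturbed to either orientation independently of the global picture, provided the chosen triples are pairwise far enough apart that their sign choices do not interfere via the Grassman--Pl\"ucker relations. Recording the perturbation bits of a maximal set of such independent triples as a binary string then embeds $\{0,1\}^{\lfloor 0.188 n^2\rfloor}$ injectively into the set of simple order types on $[n]$, since distinct bit vectors already differ on a certified oriented triple.

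The main obstacle in both directions is of the same flavor: one must convert a soft counting or construction into one that genuinely respects the Grassman--Pl\"ucker identities, since naive enumeration vastly overcounts (on the upper bound side) while naive construction vastly undercounts (on the lower bound side) the number of realizable simple order types. Calibrating the explicit constants $0.188$ and $0.6571$ is then a numerical optimization of how many truly independent bits the encoding, respectively the perturbation, can carry. I expect the lower bound construction to be the trickier half, since certifying realizability of the perturbed configuration requires either explicit coordinates or a nontrivial realizability criterion, rather than mere consistency of oriented triples.
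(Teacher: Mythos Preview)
This theorem is quoted from Felsner and Valtr and is not proved in the paper; it is used only as a black-box input to the proof of Theorem~\ref{k-bounds}. So there is no argument in the paper to compare your proposal against.

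That said, your lower bound plan has a fatal gap. You propose to exhibit $2^{0.188 n^2}$ pairwise distinct simple order types by perturbing a nearly-degenerate \emph{point set} in $\Theta(n^2)$ independent bits. But Theorem~\ref{ordertype:count} (Goodman--Pollack), also quoted in the paper, says that the number of order types realizable by points is only $2^{4n\log n + O(n)}$. Hence no construction based on actual point configurations can ever produce $2^{\Omega(n^2)}$ distinct order types; the whole point of the contrast between Theorems~\ref{ordertype:count} and~\ref{abs:count} in the paper is precisely that $t_\text{ot}(n)$ outruns $t(n)$. The Felsner--Valtr lower bound is obtained by constructing many \emph{pseudoline arrangements} (equivalently, wiring diagrams, or swap pairs up to the elementary operations) whose crossing patterns are not governed by the Grassmann--Pl\"ucker relations, so your worry about ``certifying realizability of the perturbed configuration'' is not merely the trickier half --- it is exactly the obstruction that makes the approach impossible.

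Your upper bound sketch is in the right spirit (encode a simple order type by a wiring diagram and bound the number of encodings), but the passage from the crude $(n-1)^{\binom{n}{2}}$ count to the specific constant $0.6571$ requires a concrete encoding and a careful information-theoretic analysis, not just the observation that heights are ``severely constrained''. As stated, the plan does not contain the idea that actually produces the constant.
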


\begin{theorem} \label{k-real}
Let $t_k(n)$ denote the number of distinct combinatorial types on $[n]$ that can be realized by $k$-gons. For some constant $c$, \[ t_k(n) \leq 2^{c kn (\log n + \log k)} . \]
\end{theorem}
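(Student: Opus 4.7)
The plan is to reduce the count of combinatorial types realizable by $k$-gons to Goodman and Pollack's bound on order types of point sets (Theorem~\ref{ordertype:count}), by encoding each arrangement via its vertex set. For each realizable $\coty$, I would pick some $\mc A = \{A_1,\dots,A_n\}\in\fullrs_k(\coty)$. Perturbing within $\fullrs_k(\coty)$, I would arrange that each $A_i$ has exactly $k$ vertices and that the combined vertex multiset $V = \bigsqcup_i V(A_i)$, of cardinality $nk$, is in general position. Let $\chi_V$ be the resulting simple order type on $V$, and $\lambda : V \to [n]$ the labeling recording which polygon each vertex belongs to.

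The heart of the argument is the claim that $(\chi_V,\lambda)$ determines $\coty$. The key observation is that in this paper's duality, the dual support curve $A_i^*$ is exactly the upper envelope on the cylinder of the great circles $\{v^* : v \in V(A_i)\}$. Hence every crossing of $A_i^*$ with $A_j^*$ occurs precisely at a crossing of some pair $v^*,w^*$ with $v\in V(A_i)$, $w\in V(A_j)$, and the associated common supporting tangent of $A_i,A_j$ is the line $\overline{vw}$. Whether such a pair $(v,w)$ actually contributes a common supporting tangent reduces to the orientation predicate that every other vertex of $A_i\cup A_j$ lies on the left of the directed line $\overline{vw}$, which is expressible in $(\chi_V,\lambda)$. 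Moreover, the cyclic order in which these crossings appear along $\mb S^1$ coincides with the cyclic order of the corresponding crossings in the full great-circle arrangement $\{v^* : v \in V\}$, which by the identification of order type with combinatorial type for point sets (a special case of Theorem~\ref{combequivtype}) is determined by $\chi_V$. Reading off the resulting sequence of swaps recovers a swap pair for $\mc A^*$, hence $\coty$.

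Given the reduction, the counting is immediate: Theorem~\ref{ordertype:count} gives at most $t(nk) \leq 2^{4nk\log(nk) + O(nk)}$ choices for $\chi_V$, and $\lambda$ is one of at most $n^{nk}$ functions, so
\[ t_k(n) \;\leq\; t(nk) \cdot n^{nk} \;\leq\; 2^{4nk(\log n + \log k) + O(nk) + nk\log n} \;\leq\; 2^{c\,kn(\log n + \log k)} \]
for a suitable absolute constant $c$. The main obstacle is the determination step: while the identification of each crossing of $\mc A^*$ with a crossing of a pair of great circles from $V$ is visible from the duality, some care is needed to check that the sweep order of $\mc A^*$ along $\mb S^1$ is correctly recovered from the sub-sequence of crossings visible in the upper envelopes, and to verify that the required perturbation into general position can be performed within $\fullrs_k(\coty)$ without altering $\ct(\mc A)$.
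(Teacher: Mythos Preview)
Your approach is correct and is essentially the same as the paper's: reduce to the Goodman--Pollack count by associating to a $k$-gon realization the order type of its $kn$ vertices, then invoke Theorem~\ref{ordertype:count}. The paper's version is slightly cleaner in two respects: it indexes the vertices directly by $[n]\times[k]$ so that the polygon membership is built into the index set (giving $t_k(n)\leq t(kn)$ with no separate $n^{nk}$ labeling factor), and it uses the fact that $t(\cdot)$ in Theorem~\ref{ordertype:count} already counts non-simple order types, so no perturbation into general position is needed.
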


\begin{proof}
Since the combinatorial type of an arrangement of $n$ $k$-gons is determined by the order type of the $kn$ vertices of the arrangement, we have the inequality, $t_k(n) \leq t(kn)$. Specifically, for every combinatorial type $\coty$ that can be realized by $k$-gons, fix a realization by $k$-gons $\mc A = \{A^1,\dots,A^n\}$.  Then, for each $k$-gon $A^s$, fix a labeling of vertices so that $A^s$ is the convex hull of a point set $\{p_1^s,\dots,p_k^s\}$.  Now associate the order type of the points $\{p_1^1,\dots,p_k^n\}$ to $\coty$.  In this way, we define an injective map from the set of combinatorial types on $[n]$ that can be realized by $k$-gons to the set of order types on $\{({}_1^1),\dots,({}_k^n)\}$ that can be realized by points.
The upper bound on the number of combinatorial types now follows from Theorem~\ref{ordertype:count}.
\end{proof} 

\begin{theorem} \label{N-gons}
A combinatorial type ~$\coty$ with $N$ common supporting tangents 
can be realized by $N$-gons.    
\end{theorem}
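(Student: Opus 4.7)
The plan is to realize $\coty$ by polygons obtained from a smooth realization, placing polygon vertices at the tangent points of the common supporting tangents.

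By Theorem~\ref{realizations} there is a realization $\mc A = \{A_1, \ldots, A_n\}$ of $\coty$; applying Remark~\ref{blashke} I may assume each $A_i$ has a $C^2$ boundary, and the genericity assumptions on arrangements ensure that each of the $N$ common supporting tangents touches its two bodies at unique points. For each $i$, let $p_1^i, \ldots, p_{N_i}^i \in \partial A_i$ denote the tangent points on $\partial A_i$ coming from the $N_i$ common supporting tangents that involve $A_i$; since clearly $N_i \leq N$, the inscribed polygon
\[
P_i := \conv(p_1^i, \ldots, p_{N_i}^i)
\]
is an $N$-gon (in fact an $N_i$-gon), and $\mc P := \{P_1, \ldots, P_n\}$ is an arrangement of $N$-gons.

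Each common supporting tangent $\ell$ of $\mc A$ between bodies $A_i, A_j$ remains a common supporting tangent of $\mc P$: the tangent point on $A_i$ is a vertex of $P_i$, and since $P_i \subseteq A_i$ lies in the closed half-plane on the left of $\ell$ (and analogously for $P_j$), the line $\ell$ also supports $P_i$ and $P_j$ with both on the left. Thus $\mc P$ inherits all $N$ common tangents of $\mc A$ in the same cyclic angular order around the cylinder, and so already has at least the combinatorial information of $\coty$.

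The main obstacle is to prove that no \emph{spurious} common tangent arises in $\mc P$, so that $\ct(\mc P) = \coty$ rather than some type with more crossings. In dual terms, one needs the piecewise cosine curves $P_i^*$ to cross each other exactly at the $N$ points where the smooth curves $A_i^*$ cross, and nowhere else; equivalently, for each pair $i \neq j$, the sign of $h_{P_i}(\theta) - h_{P_j}(\theta)$ must agree with the sign of $h_{A_i}(\theta) - h_{A_j}(\theta)$ throughout $\mb{S}^1$. My strategy is to begin with an $\mc A$ that is sufficiently ``round'' (by adding a large additive constant to each support function via Remark~\ref{blashke}) and then to analyze the explicit piecewise-cosine structure of $h_{P_i} - h_{P_j}$ to show that its sign changes occur only at the $N_{ij}$ original common tangent directions. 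Once this is verified, $\mc P$ realizes $\coty$ by $N$-gons.
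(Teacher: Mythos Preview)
Your approach differs from the paper's: rather than inscribing polygons in a smooth realization, the paper builds the $N$-gons directly by ``wrapping a wiring diagram around a large circle'', placing the $t$-th vertex of each body at polar coordinates $(2\pi t/N,\, r + \pi_t^{-1}(i))$ for $r$ large.  Every body thus has a vertex at \emph{each} of the $N$ angular positions, and the ordering of the support functions at each $\theta_t = 2\pi t/N$ is read off directly from the swap sequence, with exactly one crossing between consecutive $\theta_t$'s.  This bypasses the spurious-tangent issue entirely.

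Your proof, by contrast, stops exactly at that issue: you correctly identify that ruling out extra crossings of $h_{P_i}-h_{P_j}$ is the crux, but you do not carry it out, and the suggested ``make $\mc A$ sufficiently round by adding a large constant'' does not obviously help.  Adding a constant $c$ to every support function leaves each difference $h_{A_i}-h_{A_j}$ unchanged, while the tangent points move to $p + c\theta$, so for large $c$ the vertex of $P_i$ in direction $\theta_k$ sits essentially at $c\theta_k$.  On an interval between two common tangent directions of the pair $(i,j)$, the support function $h_{P_i}$ is governed by the nearest vertex of $P_i$, and since your $P_i$ has vertices only at the $N_i$ directions where $A_i$ itself participates (not at all $N$ directions), the dominant $c\cos(\theta-\theta_k^*)$ terms in $h_{P_i}-h_{P_j}$ need not reflect the sign of $h_{A_i}-h_{A_j}$ at all; one can arrange for the nearest vertex of $P_j$ to be much closer to $\theta$ than the nearest vertex of $P_i$, flipping the sign.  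In short, the piecewise-cosine analysis you defer is not just a routine check---it is where the argument would fail without adding, as the paper implicitly does, a vertex to \emph{every} body at \emph{every} one of the $N$ tangent directions.
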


\begin{proof}
Recall that the wiring digram of a sequence of adjacent transpositions $\sigma$ is a family of polygonal paths that cross according to $\sigma$ \cite{goodburr}. 
Informally, we will define an arrangement of $N$-gons by ``wrapping'' a wiring diagram of $\sigma$ around a large circle. 

Let $(\rho,\sigma) \in \Omega$ and $\pi_t = \sigma_t \circ \dots \circ \sigma_1$. 
Choose $r$ sufficiently large 
and let $p_{i,t} = (2\pi t/N; r +\pi\mathrlap{_t}^{-1}(i))$ in polar coordinates, so that each point set $P_i = \{p_{i,1},\dots,p_{i,N}\}$ for $i \in [n]$ is in convex position.  For this, $r \geq {(1-\cos(2\pi/N))^{-1}}$ would suffice. 
Let $A_i = \conv(P_i)$. 
Since the support functions $h_i$ of the $A_i$ now satisfy 
\[ h_{\rho\pi\mathrlap{_t}^{-1}(1)}(\theta_t) < \dots < h_{\rho\pi\mathrlap{_t}^{-1}(n)}(\theta_t) \]
for $\theta_t = 2\pi t/N$ with only once crossing between $\theta_t$ and $\theta_{t+1}$, 
the swap pair of the arrangement $\mc A = \{A_1,\dots,A_n\}$ is $(\rho,\sigma)$. 
Hence, $\mc A$ is an arrangement of $N$-gons with combinatorial type $\coty$.
\end{proof}

\begin{proof}[Proof of Theorem \ref{k-bounds}.]
The upper bound follows immediately from Theorem~\ref{N-gons} and the observation than an orientable arrangement on $[n]$ always has $N = n(n-1)$ common supporting tangents. 
For the lower bound, observe from Theorems~\ref{abs:count} and~\ref{k-real} that for any fixed $k$, the number of simple order types grows faster than the number of combinatorial types that can be realized by $k$-gons.  Specifically, there is some constant $c_1$ such that if $k \leq c_1\frac{n}{\log n}$ then $t_k(n)  < t_\text{ot}(n)$, which implies that some simple order type on $[n]$ cannot be realized by $k$-gons.
\end{proof}

\section{Contractibility} \label{contract}

To show contractibility, we construct a standard arrangement of convex bodies for each combinatorial type by defining its dual support system. 
We then show that the full realization space $\fullrs(\coty)$ is equivariantly homotopic to a circle $\mb{S}^1$ by defining a deformation retraction to the subspace of rotated copies of the standard arrangement. 
By \df{equivariantly} homotopic we mean that the corresponding homotopy maps commute with $SO(2)$.  
We then pass to the (projective) realization space $\projrs(\coty)$ by identifying arrangements related by admissible projectivities. 
Since rotations are admissible projectivities, this defines a deformation retraction from $\projrs(\coty)$ to a point.

The deformation retraction from $\fullrs(\coty)$ to a circle will proceed in two steps; see \fig~\ref{compressed}.  First in Lemma~\ref{lemcontract1}, we deform the support system of a given arrangement to a system of the same combinatorial type that depends only on the (angular) position of each crossing.  We can then consider just the positions of the crossings and ignore the rest of the geometry of the system.  Second in Lemma~\ref{lemcontract2}, we move the crossings to a set of standard positions that depend only on: the given combinatorial type and the position of a certain crossing that we fix.  The set of possible standard systems we get in the end is parametrized by the position of this fixed crossing, which defines an embedding of the circle in $\fullrs(\coty)$. 
The first deformation retraction depends on the following remark. 

\begin{remark}\label{mink}
For any pair of convex bodies $A$ and $B$, $(A+B)^*=A^*+B^*$ with Minkowski addition on the left and addition of the support functions defining the curves on the right. And, for $t \geq 0$, $(t A)^* = t (A^*)$. 
Hence, the set of all support functions is a convex cone.  That is, if $h_1$ and $h_2$ are support functions, then so is $t_1h_1 +t_2h_2$ for $t_i \geq 0$.
Note however, that the set of dual support systems of a fixed combinatorial type is not a convex set. 
\end{remark}

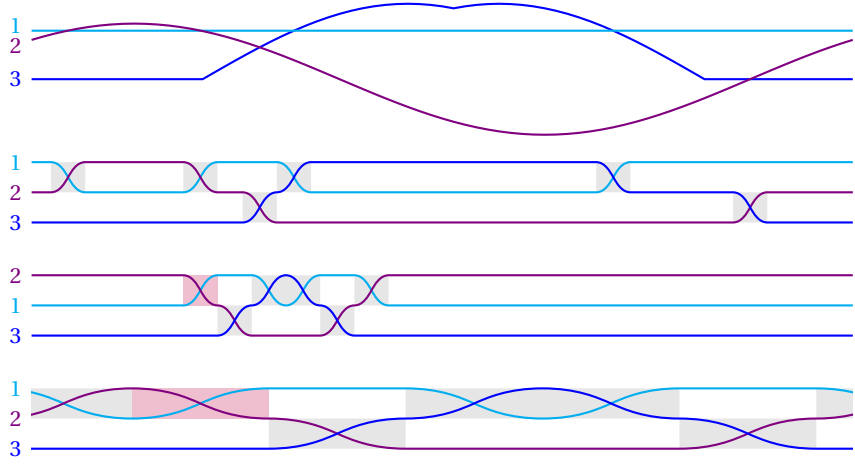
\begin{figure}[h] \centering
\begin{tikzpicture}

\begin{scope}[shift={(-5.5cm,-.2cm)},xscale=.03,yscale=1]

%

\draw[thick, blue]
(0,0) node[left] {\ft 3} -- (75,0)
\foreach \t in {80,85,...,185}
{ -- (\t,{cos(\t-165)}) }
\foreach \t in {190,195,...,295}
{ -- (\t,{cos(\t-205)}) }
 -- (360,0)
;

\draw[thick, cyan]
(0,{cos(50)}) node[left,yshift=2pt] {\ft 1} -- (360,{cos(50)})
;

\draw[thick, violet]
(0,{(sin(25)/sin(35))*cos(-45)}) node[left,yshift=-2pt] {\ft 2}
\foreach \t in {5,10,...,360}
{ -- (\t,{(sin(25)/sin(35))*cos(\t-45)}) }
;

\end{scope}

\begin{scope}[shift={(-5.5cm,-2.5cm)},xscale=.225,yscale=.4]

\fill[black!10]
(8.5/7.5,2) rectangle (23.5/7.5,3)
(66.5/7.5,2) rectangle (81.5/7.5,3)
(92.5/7.5,1) rectangle (107.5/7.5,2)
(107.5/7.5,2) rectangle (122.5/7.5,3)
(247.5/7.5,2) rectangle (262.5/7.5,3)
(307.5/7.5,1) rectangle (322.5/7.5,2)
;

\draw[thick,cyan]
(0,3) node[left] {\ft 1} \g{8.5/7.5} \ds \de \g{43/7.5} \us \ue \g{26/7.5} \ds \de \g{125/7.5} \us \ue \g{97.5/7.5}
;

\draw[thick,violet]
(0,2) node[left] {\ft 2} \g{8.5/7.5} \us \ue \g{43/7.5} \ds \de \g{11/7.5} \ds \de \g{200/7.5} \us \ue \g{37.5/7.5}
;

\draw[thick,blue]
(0,1) node[left] {\ft 3} \g{92.5/7.5} \us \ue \us \ue \g{125/7.5} \ds \de \g{45/7.5} \ds \de \g{37.5/7.5}
;

\end{scope}

\begin{scope}[shift={(-5.5cm,-4cm)},xscale=.225,yscale=.4]

\fill[purple!25]
(66.5/7.5,2) rectangle (81.5/7.5,3)
;

\fill[black!10]
(81.5/7.5,1) rectangle (96.5/7.5,2)
(96.5/7.5,2) rectangle (126.5/7.5,3)
(126.5/7.5,1) rectangle (141.5/7.5,2)
(141.5/7.5,2) rectangle (156.5/7.5,3)
;

\draw[thick,cyan]
(0,2) node[left] {\ft 1} \g{66.5/7.5} \us \ue \g{15/7.5} \ds \de \g{0} \us \ue \g{15/7.5} \ds \de \g{203.5/7.5}
;

\draw[thick,violet]
(0,3) node[left] {\ft 2} \g{66.5/7.5} \ds \de \g{0} \ds \de \g{30/7.5} \us \ue \g{0} \us \ue \g{203.5/7.5}
;

\draw[thick,blue]
(0,1) node[left] {\ft 3} \g{81.5/7.5} \us \ue \us \ue \g{0} \ds \de \g{0} \ds \de \g{218.5/7.5}
;

\end{scope}

\begin{scope}[shift={(-5.5cm,-5.5cm)},xscale=.225,yscale=.4]
\begin{scope}[xscale=4]
\clip
(0,.5) rectangle (90/7.5,3.5)
;

\fill[purple!25]
(11/7.5,2) rectangle (26/7.5,3)
;

\fill[black!10]
(26/7.5,1) rectangle (41/7.5,2)
(41/7.5,2) rectangle (71/7.5,3)
(71/7.5,1) rectangle (86/7.5,2)
(86/7.5,2) rectangle (101/7.5,3)
(-4/7.5,2) rectangle (11/7.5,3)
;

\draw[thick,cyan]
(-4/7.5,3) 
\ds \de \us \ue \g{2} \ds \de \us \ue \g{2} \ds \de
;

\draw[thick,violet]
(-4/7.5,2) 
\us \ue \ds \de \ds \de \g{4} \us \ue \us \ue
;

\draw[thick,blue]
(-4/7.5,1) 
\g{4} \us \ue \us \ue \ds \de \ds \de \g{2}
;

\end{scope}

\draw[thick,cyan]
(0,3) node[left] {\ft 1};
\draw[thick,violet]
(0,2) node[left] {\ft 2};
\draw[thick,blue]
(0,1) node[left] {\ft 3};

\end{scope}

\end{tikzpicture}
\caption{\ft
From top to bottom: 
(1) The dual support system of an arrangement $\mc A$ of combinatorial type $\coty$; compare with \fig \ref{eg_dual}. 
(2) The system $\alpha^* \projrsvc(\mc A)$ of Lemma~\ref{lemcontract1}, which depends only on the angular positions of the crossings of $\mc A^*$. 
(3) The system $\alpha^* W(\coty,\theta,\delta)$ obtained by fixing the marked crossing of $\alpha^* \projrsvc(\mc A)$, and rotating all other crossings clockwise. 
(4) The standard system $\alpha^* W(\coty,\theta)$ of Lemma~\ref{lemcontract2}, which depends only on the angular position $\theta$ of the fixed crossing and $\coty$. 
}
\end{figure}

\paragraph{Support configurations.}

The \df{support configuration} of an
arrangement $\mc A$ indexed by $\indx$ is a labeled vector configuration ${\projrsvc({\mc A}) \subset \indx^2 \times \mb{S}^1}$ which contains a triple $( i, j,\theta)$ if bodies $A_{ i},A_{ j}$ have a common supporting tangent line $\ell$ that first meets $A_{ i}$ and then meets $A_{ j}$ and has outward normal vector $\theta$. 
We say labels $(i,j)$, $(i',j')$ are \df{disjoint} when $\{i,j\} \cap \{i',j'\} = \emptyset$.
Note that a unit vector $\theta$ may appear in multiple elements of $\projrsvc({\mc A})$ with disjoint labels. 
Dually, $\projrsvc({\mc A})$ corresponds to the crossings of $\mc A^*$. 
Specifically, $( i, j,\theta) \in \projrsvc({\mc A})$ when curves $A_{ i}^*$ and $A_{ j}^*$ cross at $\theta$ with $A_{ i}^*$ crossing downward and $A_{ j}^*$ crossing upward. 
That is, the respective support functions $f_{ i}, f_{ j}$ of $A_{ i},A_{ j}$ are equal at $\theta$ and $f_{ j}-f_{ i}$ is increasing at $\theta$. 

Observe that the support configuration of an arrangement determines the combinatorial type of that arrangement.  
For a given combinatorial type $\coty$, we will define its \df{support configuration space} $\vcrs(\coty)$, which will turn out to be the set of support configurations of all arrangements realizing $\coty$.  
We first define the set of labeled configurations $\vcrs(\rho,\sigma)$ corresponding to a given swap pair $(\rho,\sigma)$.  
Recall that $\overline{\rho\sigma}$ records the ordered pairs of indices transposed by $\sigma$ acting sequentially on $\rho$.  Observe that if $(\rho,\sigma)$ is the swap pair of a system, then $\overline{\rho\sigma}{}_i$ for $i \in [N]$ is the labeling of the $i$-th crossing of the system.  Recall also that we order $\mb{S}^1$ by the parametrization by $(0,2\pi]$.  Let 
\[ 
\vcrs(\rho,\sigma) \coloneqq \left\{ \left\{ (\overline{\rho\sigma}{}_i,\,\theta_i) :\ {i \in [N]} \right\} :\
\begin{array}{l@{\vspace{2pt}}}
\theta_i \in \mb S^1,\ \theta_i \leq \theta_{i+1},\\ \theta_i = \theta_{i+1} \Rightarrow |H(\sigma_i)-H(\sigma_{i+1})| > 1
\end{array} \right\},
\]
\[
\vcrs(\coty) \coloneqq \bigcup_{(\rho,\sigma)\in \coty} \vcrs(\rho,\sigma). 
\]
Note that a vector $\theta \in \mb S^1$ might appear multiple times in $\vcrs(\coty)$ with different labels provided the pairs of indices in the labels are disjoint.  

We define a metric on $\vcrs(\coty)$ as follows. 
For a given support configuration $X$ and a given ordered pair of indices $(i,j)\in {\indx \choose 2}$, let 
$X_{(i,j)}:=\{\theta \in \mb S^1 : (i,j,\theta)\in X\}$. 
For two support configurations, $X,Y \subset \indx^2 \times \mb{S}^1$, 
\[d(X,Y)= \max_{(i,j)\in {\indx \choose 2}}d_H( X_{(i,j)}, Y_{(i,j)})\] 
where the distance between two direction vectors is given by their angle and $d_H$ is the corresponding Hausdorff metric on sets. 

\begin{lemma}\label{lemcontract1} 
For any combinatorial type $\coty$, the full realization space $\fullrs(\coty)$ is non-empty and equivariantly homotopic to the support configuration space $\vcrs(\coty)$.  
\end{lemma}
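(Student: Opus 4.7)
The plan is to prove $\fullrs(\coty) \simeq \vcrs(\coty)$ (equivariantly under $SO(2)$) by exhibiting an explicit deformation retraction. The natural map $\projrsvc \colon \fullrs(\coty) \to \vcrs(\coty)$ records the support configuration; I would construct a continuous $SO(2)$-equivariant section $\alpha^* \colon \vcrs(\coty) \to \fullrs(\coty)$ and a homotopy from $\alpha^* \circ \projrsvc$ to the identity on $\fullrs(\coty)$. The homotopy will be straight-line interpolation in support functions: given $\mc A \in \fullrs(\coty)$ with support functions $h_i$ and $\alpha^*(\projrsvc(\mc A))$ with support functions $g_i$, let $H(\mc A, t)$ be the arrangement whose support functions are $(1-t) h_i + t g_i$. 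By Remark~\ref{mink} each convex combination of support functions is again a support function of a convex body, so $H$ is well-defined.

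To construct $\alpha^*(X)$ for $X = \{(i_\ell, j_\ell, \theta_\ell)\}_{\ell \in [N]}$, I would build $C^2$-smooth functions $g_1, \dots, g_n \colon \mb S^1 \to \mb R$ that are constant except in small arcs around each $\theta_\ell$, in which $g_{i_\ell}$ and $g_{j_\ell}$ swap heights using a fixed translation-invariant bump profile (as depicted in the figure preceding the lemma). The widths of the bumps should depend continuously on the gaps between consecutive crossings that share a common index, small enough to avoid overlap; bumps at equal $\theta$ with disjoint labels do not interact. Translation-invariance of the profile ensures commutativity with $SO(2)$. Adding a sufficiently large constant makes each $g_i$ satisfy the Blashke criterion of Remark~\ref{blashke}, so each is the support function of a convex body and $\alpha^*(X) \in \fullrs(\coty)$. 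In particular, $\fullrs(\coty)$ is non-empty.

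The main verification is that $H(\mc A, t) \in \fullrs(\coty)$ for all $t$. Since $\mc A$ and $\alpha^*(\projrsvc(\mc A))$ share the same support configuration, for each pair of indices $i,j$ the differences $F_0 = h_j - h_i$ and $F_1 = g_j - g_i$ vanish on exactly the same finite set of angles and change sign in the same direction at each common zero. Both are sign-constant on each arc between consecutive common zeros, and just to the right of each zero they share the same sign (dictated by the crossing direction), so by sign alternation they agree in sign on every arc. The convex combination $(1-t) F_0 + t F_1$ therefore vanishes exactly on the common zero set and transversally changes sign there; no new crossings appear, no existing ones disappear, and triple coincidences cannot be created. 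Hence $\projrsvc(H(\mc A, t))$ is constant in $t$ and $\ct(H(\mc A, t)) = \coty$ throughout. This shows $H$ retracts $\fullrs(\coty)$ onto $\alpha^*(\vcrs(\coty))$, which is homeomorphic to $\vcrs(\coty)$ via $\projrsvc$, and all maps commute with rotations by construction.

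The anticipated technical obstacle is the continuity of $\alpha^*$ at support configurations where crossings collide at a common $\theta$. In $\vcrs(\coty)$, two crossings with non-disjoint labels may only merge via an independent-transposition operation, permitted exactly when $|H(\sigma_i) - H(\sigma_{i+1})| > 1$ as in the definition of $\vcrs(\rho,\sigma)$. The bump widths and profiles must be selected so that $C^2$-smoothness, the Blashke condition $g_i + g_i'' > 0$, genericity, and $SO(2)$-equivariance all survive such limits simultaneously; this bookkeeping is the core of the argument.
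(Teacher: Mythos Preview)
Your proposal is correct and follows essentially the same approach as the paper: the map $\projrsvc$, a section $\alpha$ built as a smooth wiring diagram with bumps of width governed by the minimum gap between non-disjoint-label crossings (made into genuine support functions via Blaschke's criterion after adding a large constant), and a straight-line homotopy in support functions (equivalently, Minkowski interpolation) that fixes the support configuration throughout. Your sign argument for $(1-t)F_0 + tF_1$ actually spells out what the paper compresses into a one-line appeal to Remark~\ref{mink}; the only slip is in your final paragraph, where you write ``non-disjoint'' but mean ``disjoint'' (only crossings with disjoint labels can share an angle in $\vcrs(\coty)$, and these cause no trouble since the relevant bump width $\delta$ depends only on gaps between \emph{non}-disjoint-label crossings and hence stays bounded away from zero on any convergent sequence in $\vcrs(\coty)$).
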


\begin{proof}
For $\mc{A} \in \fullrs(\coty)$ with swap pair $(\rho,\sigma)$, 
we have $\projrsvc(\mc A) \in \vcrs(\rho,\sigma) \subset \vcrs(\coty)$, so assigning each arrangement to its support configuration defines a map $\projrsvc \colon \fullrs(\coty) \to \vcrs(\coty)$, which will be one direction of the homotopy equivalence.  

For the other direction, 
we define an embedding ${\alpha : \vcrs(\coty) \to \fullrs(\coty)}$.  For each labeled configuration $V \in \vcrs(\coty)$, we construct a system of curves $\alpha^*(V) = {\{ A^*_i : i \in \indx \}}$ where $A^*_i = f_i(\mb{S}^1)$, $f_i : \mb{S}^1 \to \mb{R}^1$, and show that $\alpha^*(V)$ is the dual support system of an arrangement $\alpha(V)$ that has support configuration $V$. 
The system $\alpha^*(V)$ that we construct may be regarded as a smooth analog of Goodman's wiring diagram \cite{goodburr}. 

Fix $V \in \vcrs(\coty)$, let $V_{ i}\subset \mb{S}^1$ denote the vectors of $V$ with labels involving $ i$, and 
let $\delta$ be the minimum angular distance between any two vectors of $V$ with non-disjoint labels. 
For $v = (i,j,\theta) \in V$ define the open arc $ \Theta(v) := \left(\theta - \nicefrac{\delta}{2},\; \theta + \nicefrac{\delta}{2} \right) \subset \mb{S}^1$. Now define $f_{ i}$ to be constant on the complement of the arcs $\Theta(V_{ i})$, and
to smoothly increase or decrease by $\pm 1$ symmetrically about $\theta$ in each arc $\Theta(v)$ according to the label on $v \in V_{ i}$; that is, $f_{ i}$ increases on $\Theta(v)$ if $( j, i,\theta) \in V$ and decreases if $( i, j,\theta) \in V$ for some $ j$.\footnote{The definition of $f_{ i}$ on $\Theta(v)$ is irrelevant as long as $f_{ i}$ is $C^2$-smooth, monotonic, symmetric about $\theta$, and varies continuously with respect to $V$.  A cubic spline would suffice for this.} 
We claim that each $f_i$ is well defined up to an additive constant, and these constants can be chosen so that each pair $f_{ i},f_{ j}$ coincides on $V_{ i} \cap V_{ j}$ and nowhere else. 

Since $\sigma_N \cdots \sigma_1$ is the identity permutation, each $f_{ i}$ increases the same number of times as it decreases on arcs in $\Theta(V_{ i})$, so traversing once around $\mb{S}^1$ results in no net change in the value of $f_{ i}$, which is therefore well defined up to an additive constant. 

Let $c$ be some constant to be determined later.  
For $\xi \in \mb{S}^1$ ordered by $(0,2\pi]$, let $j(\xi)$ be the largest integer for which $\theta_{j(\xi)} < \xi$ where $\theta_j$ is the angle of the $j$'th vector of $V$. 
We will show that letting $f_i(\xi) = c + \sigma_{j(\xi)} \dots \sigma_1 \rho(i)$ on the complement of $\Theta(V_i)$ satisfies our claim.  
For consecutive angles $\theta_a, \theta_b \in V_i$,  
the labels on the vectors of $V$ in the interval $[\theta_a +\nicefrac{\delta}{2},\theta_b -\nicefrac{\delta}{2}]$ do not involving $i$, so $i$ is remains fixed by the corresponding adjacent transpositions of the swap sequence $\sigma_{j(\theta_a +\nicefrac{\delta}{2})+1},\dots,\sigma_{j(\theta_b -\nicefrac{\delta}{2})}$, which implies $f_i$ is constant on this interval.  Since the $f_i$ are defined to increase or decrease on $\Theta(V_i)$ according to the action of the swap sequence, this completely determines a smooth function $f_i$.  Since the $f_i$ increase or decrease symmetrically in each arc of $\Theta(V_i)$, each pair $f_i$, $f_j$ coincide exactly on $V_{ i} \cap V_{ j}$.

To fix the constant $c$, let 
\[ \min_{ ( i,\theta) \in \indx \times \mb{S}^1} (f_{ i}(\theta) + f\mathrlap{{}_{ i}}''(\theta)) = 1. \] 
By Remark~\ref{blashke}, the system $\alpha^*(V)$ defined by the functions $f_i$ is the dual support system of an arrangement $\alpha(V) \in \fullrs(\coty)$ that is uniquely and continuously determined by $V \in \vcrs(\coty)$, and $\projrsvc(\alpha(V)) = V$. 
This gives us a subspace ${\alpha}(\coty) := \{ \alpha(V) : V \in \vcrs(\coty)\} \subset \fullrs(\coty)$ that is homeomorphic to $\vcrs(\coty) = \projrsvc(\fullrs(\coty))$. 
For $\mc A \in \fullrs(\coty)$ define 
${\mc A}_t := t \alpha({\projrsvc(\mc A)}) + (1-t){\mc A}$ for $0\leq t \leq 1$ by Minkowski addition on each body of the arrangement. Since $\projrsvc(\mc A) = \projrsvc (\alpha({\projrsvc(\mc A)}))$ and, as we linearly interpolate between two systems with the same crossings, the crossings remain fixed by Remark~\ref{mink}, $\projrsvc(\mc A _t)$ is constant for all $t \in[0,1]$. Thus, $\alpha(\coty)$ is an equivariant deformation retract of $\fullrs(\coty)$. 
\end{proof}

\paragraph{Local sequences and associated tableau.}
We now introduce an encoding of the combinatorial type extending the local sequences of a point set. 
This encoding will be used in several proofs, including in later sections of the paper. 

For a system $\mc S$, the \df{local sequence} $\lambda_{ i} = (\lambda_{i,1},\dots,\lambda_{i,n_i})$ of $i \in \indx$ lists the indices of the curves that $S_{ i}$ crosses ordered by $(0,2\pi]$; see \fig~\ref{eg_bump}.  
Analogously for a pair $(\rho,\sigma) \in \coty$, the local sequence $\lambda_{ i}$ lists the indices $\lambda_{i,j}$ appearing together with $ i$ as part of a pair $(\lambda_{i,j},i)$ or $(i,\lambda_{i,j})$ in the incidence sequence $\overline{\rho\sigma}$.  The \df{associated tableau} $\Lambda$ of $(\rho,\sigma)$ is the tableau whose rows are the local sequences for $(\rho,\sigma)$ ordered by $\rho$ from bottom to top. We say $\Lambda$ is a \df{tableau representation} of the combinatorial type $\coty$. The local sequences and associated tableau of a system or arrangement are that of its swap pair.

\begin{figure}[h]
\centering

\begin{tikzpicture}

\begin{scope}[yscale=.5,xscale=.7]

\draw[thick,cyan]
(0,4) node[left] {\ft $d$}
\g{3} \ds \du \ue
;

\draw[thick,blue]
(0,3) node[left] {\ft $c$}
\g{1} \ds \du \ug{1} \ud \de
;

\draw[thick,violet]
(0,2) node[left] {\ft $b$}
\ds \de \g{2} \us \ue
;

\draw[thick,black]
(0,1) node[left] {\ft $a$}
\us \ug{1} \ud \de \ds \de
;

\node at (0,0) {\ft $0$};
\node at (6,0) {\ft $2\pi$};

\end{scope}

\begin{scope}[shift={(0,-2)},scale = .4,every node/.style={anchor=mid}]
\draw 
(0,0) -- (4,0)
(0,1) -- (4,1)
(0,2) -- (4,2)
(0,3) -- (4,3)
(0,4) -- (2,4)
(0,0) -- (0,4)
(1,0) -- (1,4)
(2,0) -- (2,4)
(3,0) -- (3,1)  (3,2) -- (3,3)
(4,0) -- (4,1)  (4,2) -- (4,3)
;
      \node at (.5,.5) {\ft $b$};
      \node at (1.5,.5) {\ft $c$};
      \node at (2.5,.5) {\ft $c$};
      \node at (3.5,.5) {\ft $b$};
      \node at (.5,1.5) {\ft $a$};
      \node at (1.5,1.5) {\ft $a$};
      \node at (.5,2.5) {\ft $a$};
      \node at (1.5,2.5) {\ft $a$};
      \node at (2.5,2.5) {\ft $d$};
      \node at (3.5,2.5) {\ft $d$};
      \node at (.5,3.5) {\ft $c$};
      \node at (1.5,3.5) {\ft $c$};
\path
(-.5,3.5) node[cyan] {\ft $d$}
(-.5,2.5) node[blue] {\ft $c$}
(-.5,1.5) node[violet] {\ft $b$}
(-.5,.5) node[black] {\ft $a$}
;

\end{scope}

\end{tikzpicture}
\hspace{1.5cm}
\begin{tikzpicture}

\begin{scope}[yscale=.5,xscale=.7]

\draw[thick,cyan]
(0,4) node[left] {\ft $d$}
\g{2} \ds \du \ue \g{1}
;

\draw[thick,blue]
(0,3) node[left] {\ft $c$}
\ds \du \ug{1} \ud \de \g{1}
;

\draw[thick,violet]
(0,1) node[left] {\ft $b$}
\g{3} \us \ud \de
;

\draw[thick,black]
(0,2) node[left] {\ft $a$}
\us \ud \de \ds \du \ue
;

\node at (0,0) {\ft $0$};
\node at (6,0) {\ft $2\pi$};

\end{scope}

\begin{scope}[shift={(0,-2)},scale = .4,every node/.style={anchor=mid}]
\draw 
(0,0) -- (2,0)
(0,1) -- (4,1)
(0,2) -- (4,2)
(0,3) -- (4,3)
(0,4) -- (2,4)
(0,0) -- (0,4)
(1,0) -- (1,4)
(2,0) -- (2,4)
(3,1) -- (3,3)
(4,1) -- (4,3)
;
      \node at (.5,.5) {\ft $a$};
      \node at (1.5,.5) {\ft $a$};
      \node at (.5,1.5) {\ft $c$};
      \node at (1.5,1.5) {\ft $c$};
      \node at (2.5,1.5) {\ft $b$};
      \node at (3.5,1.5) {\ft $b$};
      \node at (.5,2.5) {\ft $a$};
      \node at (1.5,2.5) {\ft $a$};
      \node at (2.5,2.5) {\ft $d$};
      \node at (3.5,2.5) {\ft $d$};
      \node at (.5,3.5) {\ft $c$};
      \node at (1.5,3.5) {\ft $c$};
\path
(-.5,3.5) node[cyan] {\ft $d$}
(-.5,2.5) node[blue] {\ft $c$}
(-.5,.5) node[violet] {\ft $b$}
(-.5,1.5) node[black] {\ft $a$}
;

\end{scope}

\end{tikzpicture}

\caption{\ft 
\textbf{Left:} A system and its associated tableau of local sequences $\Lambda$. \textbf{Right:} The tableau $\bump(\Lambda,\{a,b\})$ and a corresponding system.
}\label{eg_bump}
\end{figure}
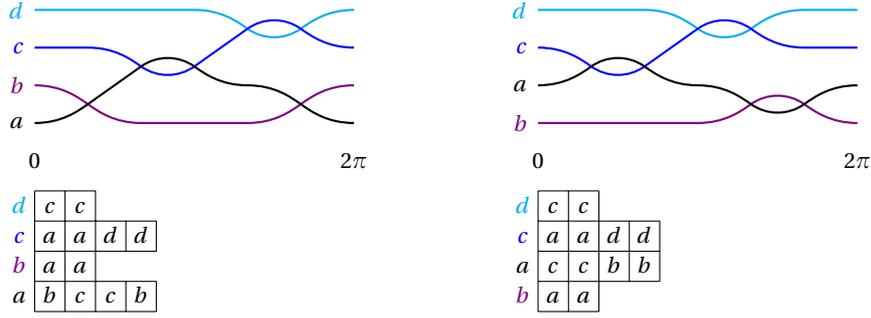

Like in the definition of combinatorial type, we define an equivalence relation on associated tableau. We say a pair $ j, k \in \indx$ are \df{adjacent} in a tableau $\Lambda$ with rows $\lambda_{ i}$ when \[ \lambda_{ j} = ( k,\lambda_{ j,2},\dots,\lambda_{ j,n_j}) \quad \text{and} \quad \lambda_{ k} = ( j,\lambda_{ k,2},\dots,\lambda_{ k,n_k}).\] In this case, \df{bumping} $\{ j, k\}$ sends $\Lambda$ to the tableau $\Lambda' = \bump(\Lambda,\{ j, k\})$ with rows 
\[ \lambda\mathrlap{'}_{ i} = \left\{\begin{array}{ll}
(\lambda_{ j,2},\dots,\lambda_{ j,n_{ j}}, k) &  i= j \\
(\lambda_{ k,2},\dots,\lambda_{ k,n_{ k}}, j) &  i= k \\
\lambda_{ i} &  i\neq  j, k \\
\end{array}\right.\]
and the order of rows $ j$, $ k$ in the tableau is reversed. 

\begin{remark}\label{bumpequiv}
A pair of tableaux represent the same combinatorial type if and only if one can be obtained from the other by a sequence of bumps, since cyclic shifts correspond to bumps and independent transpositions do not change the associated tableau.
\end{remark}

For a pair of tableau $\Lambda^1, \Lambda^2$ with respective rows $\lambda^1_{i},\lambda^2_{i}$ appearing in the same order $\rho$ , let $\Lambda^1\cdot \Lambda^2$ denote row-wise concatenation.  That is, $\Lambda^1\cdot \Lambda^2$ has rows \[ \lambda_i = (\lambda^1_{i,1},\dots,\lambda^1_{i,n_{1,i}},\lambda^2_{i,1},\dots,\lambda^2_{i,n_{2,i}}). \] The \df{periodicity} $p$ of a tableau $\Lambda$ is the largest $p$ such that $\Lambda$ is a $p$-fold concatenation of a tableau $\Lambda'$, \[\Lambda = \underset{\times p}{\Lambda' \cdots \Lambda'}.\] We call the tableau $\Lambda'$, the \df{period} of $\Lambda$. We say $\Lambda$ is \df{non-periodic} when $p=1$. The periodicity of a combinatorial type $\coty$ is that of any tableau representing $\coty$, which is well defined by the following lemma. 

\begin{lemma}\label{period}
The period of a tableau is unique, and all tableau representing a fixed combinatorial type have the same periodicity.
\end{lemma}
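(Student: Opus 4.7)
The plan is to handle the two assertions separately: I would verify uniqueness of the period for a fixed tableau directly from the definition, and then use Remark~\ref{bumpequiv} to reduce invariance across combinatorially equivalent tableaux to the case of a single bump.

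For uniqueness, suppose $\Lambda$ has rows $\lambda_i$ of length $n_i$ and that $\Lambda = \underbrace{\Lambda'\cdots\Lambda'}_{p}$ for some tableau $\Lambda'$. Row-wise concatenation forces row $i$ of $\Lambda'$ to have length $n_i/p$ and to coincide with the initial segment of length $n_i/p$ of $\lambda_i$; hence $\Lambda'$ is completely determined by the pair $(\Lambda,p)$. Since periodicity is defined as the maximum $p$ for which such a $\Lambda'$ exists, both $p$ and $\Lambda'$ are unique.

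For invariance, by Remark~\ref{bumpequiv} equivalent tableaux are related by a sequence of bumps, so it suffices to show that a single bump preserves periodicity. The key compatibility I would establish is: if $\Lambda = \underbrace{\Lambda'\cdots \Lambda'}_p$ and $\{j,k\}$ is adjacent in $\Lambda$, then $\{j,k\}$ is also adjacent in $\Lambda'$, and
\[ \bump(\Lambda,\{j,k\}) \;=\; \underbrace{\bump(\Lambda',\{j,k\})\cdots \bump(\Lambda',\{j,k\})}_{p}. \]
The adjacency transfers because the first entry of each row of $\Lambda$ equals the first entry of the corresponding row of $\Lambda'$. For the equality, row $j$ of $\Lambda$ consists of $p$ consecutive copies of some block $(k,a_1,\dots,a_m)$; removing the leading $k$ and appending a $k$ at the end converts this into $p$ copies of $(a_1,\dots,a_m,k)$, which is exactly row $j$ of $\bump(\Lambda',\{j,k\})$. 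The analogous argument handles row $k$, the remaining rows are unchanged, and the row order is reversed on $j$ and $k$ on both sides so the block structure stays consistent. Thus bumping does not decrease periodicity. For the opposite inequality I would use that $N$ successive bumps (corresponding to $N$ cyclic shifts of the swap sequence of length $N$) return any swap pair to itself, so a single bump can be undone by $N-1$ further bumps; applying the same non-decreasing argument along this reverse sequence yields equality.

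The step that requires the most care is the compatibility claim, where one must track that reversing the positions of $j$ and $k$ in the row ordering after the bump is consistent with the reversal applied inside each copy of $\Lambda'$, so that the $p$-fold concatenation structure is genuinely preserved. Once this is in hand, both assertions of the lemma follow immediately.
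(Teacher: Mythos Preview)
Your proposal is correct and follows essentially the same approach as the paper: uniqueness from the fact that $\Lambda'$ is forced to be the initial $n_i/p$ segment of each row, and invariance from the compatibility $\bump(\Lambda'^{\cdot p},\{j,k\}) = \bump(\Lambda',\{j,k\})^{\cdot p}$, which shows periodicity is monotone under bumps and hence constant on the equivalence class. The paper asserts this compatibility in one line without your detailed row-by-row justification, and for the reverse inequality it simply invokes Remark~\ref{bumpequiv} directly (there is a sequence of bumps from $\Lambda_2$ back to $\Lambda_1$), which is slightly cleaner than your $N$-cyclic-shifts argument but amounts to the same thing.
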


\begin{proof}
For uniqueness, just observe that row $\lambda\mathrlap{'}_i$ of $\Lambda'$ consists of the first $n_i/p$ entries of row $\lambda_i$ of $\Lambda$ where $n_i$ is the length of $\lambda_i$. Now consider a pair of tableau $\Lambda_1$ and $\Lambda_2 = \bump(\Lambda_1,\{i,j\})$ representing the same $\coty$ with periodicities $p_1$, $p_2$ respectively, and let $\Lambda\mathrlap{'}_1$ be the period of $\Lambda_1$. Then $\Lambda_2$ is the $p_1$-fold concatenation of $\Lambda\mathrlap{'}_2 = \bump(\Lambda\mathrlap{'}_1,\{i,j\})$. Therefore, $p_2 \geq p_1$. Since any tableau representing $\coty$ can be obtained from any other tableau representing $\coty$ by a sequence of bumps, all tableaux have the same periodicity.
\end{proof}

\paragraph{Standard configurations (non-periodic).}

For non-periodic non-layered combinatorial types $\coty$, we will construct a standard set of labeled vector configurations $\wcrs(\coty) \subset \vcrs(\coty)$ parametrize by $\mb S^1$. Here we arbitrarily choose a standard configuration similar to the ``compressed form'' given in \cite[page 31]{knuti}. For layered combinatorial types, we can apply this construction independently to each layer. The periodic case will be dealt with in the next subsection. 

We first construct a labeled vector configuration $W(\coty,\theta,d)$ for $\theta \in \mb S^1$ and $\delta>0$ sufficiently small as follows. Let $\Lambda_\text{min}$ be the lexicographically minimal tableau representing $\coty$ for which there exists exactly one adjacent pair. By Lemma~\ref{lemcontract2} below, such a tableau always exists, provided $\coty$ is non-layered. We will define a sequence of configurations $W_t$ recursively starting from $t=0$. To start, set $\Lambda_0=\Lambda_\text{min}$, $\xi_0=\theta$, $W_0 = \emptyset$. Let $\{(i_{t,1},j_{t,1}),\dots,(i_{t,m_t},j_{t,m_t})\}$ be the set of all adjacent pairs in $\Lambda_t$ ordered according to the rows of $\Lambda_t$. Let 
\[ W_{t+1} = W_t \cup \{(i_{t,1},j_{t,1},\xi_t),\dots,(i_{t,m_t},j_{t,m_t},\xi_t)\}, \] 
$\xi_{t+1} = \xi_t +\delta$, and let $\Lambda_{t+1}$ be the tableau obtained from $\Lambda_t$ by interchanging the corresponding pairs of rows and deleting the first entry from each of these rows. Note that this is similar to a bump except that initial entries are deleted instead of being moved to the end of their respective rows. Eventually, $\Lambda_T = \emptyset$ for some minimal $T$.  Let $W(\coty,\theta,\delta) = W_T$. Finally, let 
\[ \wcrs(\coty) = \{W(\coty,\theta,{2\pi}/{N}): \theta \in \mb S^1 \} .\]

\begin{lemma}\label{lemcontract2}
For any non-layered combinatorial type $\coty$, $\vcrs(\coty)$ is equivariantly homotopic to $\wcrs(\coty) \simeq \mb{S}^1$.
\end{lemma}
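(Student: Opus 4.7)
The plan is to exhibit $\wcrs(\coty)$ as a circle sitting inside $\vcrs(\coty)$ and to construct an equivariant strong deformation retraction of $\vcrs(\coty)$ onto it. First I would verify three preliminary facts. (1) $\Lambda_\text{min}$ exists: starting from any tableau representing the non-layered type $\coty$ and applying bumps, one can always reach a tableau with exactly one adjacent pair (essentially by sweeping a realization and positioning the start just before an isolated crossing), and lex-minimality then singles out $\Lambda_\text{min}$ uniquely. (2) The inductive construction of $W(\coty,\theta,\delta)$ is well-defined and terminates in some $T$ steps because each iteration strictly shortens the total length of the working tableau; with $\delta = 2\pi/N$ the angles $\xi_0,\dots,\xi_{T-1}$ fit in $[0,2\pi)$, and crossings placed at a common $\xi_t$ arise from simultaneously adjacent pairs of $\Lambda_t$, hence have pairwise disjoint labels, so $W(\coty,\theta,2\pi/N) \in \vcrs(\coty)$. (3) Under the non-periodic assumption in force in this subsection, $\Lambda_\text{min}$ has no nontrivial cyclic row symmetry, so $\theta \mapsto W(\coty,\theta,2\pi/N)$ is an injective continuous embedding of $\mb{S}^1$ into $\vcrs(\coty)$, yielding $\wcrs(\coty) \cong \mb{S}^1$.

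The heart of the argument is constructing the equivariant retraction. For each $V \in \vcrs(\coty)$, I associate a canonical starting angle $\theta(V)$: as the sweep angle $\xi$ traverses $\mb{S}^1$, the tableau associated to the sweep of $V$ changes by a bump whenever $\xi$ crosses a crossing of $V$, and it visits every tableau representing $\coty$; by non-periodicity, exactly one maximal arc of $\xi$-values produces $\Lambda_\text{min}$, and I set $\theta(V)$ to be its right endpoint. I then partition the crossings of $V$ into batches $B_0,\dots,B_{T-1}$, where $B_k$ consists of the crossings corresponding to the adjacent pairs processed at step $k$ of the construction of $W(\coty,\theta(V),\cdot)$; by construction, starting from $\theta(V)$ and proceeding counterclockwise, every crossing in $B_k$ precedes every crossing in $B_{k+1}$. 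The homotopy sends each $(i,j,\theta) \in V$ belonging to $B_k$ to $(i,j,(1-t)\theta + t\xi_k)$ at time $t$, with $\xi_k = \theta(V) + 2\pi k/N$. I would then verify that this preserves the combinatorial type throughout: between batches, the cyclic order is preserved by linearity combined with $\xi_k < \xi_{k+1}$ and the fact that $B_k$ precedes $B_{k+1}$ in $V$; within a single batch, the crossings have pairwise disjoint labels, so any reordering is an independent transposition and leaves the combinatorial type unchanged. The retraction fixes $\wcrs(\coty)$ pointwise, since there $\theta(V)$ recovers the parameter and every batch already sits at its target angle. Equivariance under $SO(2)$ is immediate because a global angular shift of $V$ shifts $\theta(V)$ and every $\xi_k$ by the same amount.

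The main technical obstacle is continuity of $V \mapsto \theta(V)$ and of the batch assignment as $V$ varies in $\vcrs(\coty)$. A priori these could jump when two crossings of $V$ coalesce or pass one another in angle, but crossings of an element of $\vcrs(\coty)$ that coincide in angle necessarily have pairwise disjoint labels, so their precise ordering is invisible to the combinatorial sweep data and the family of associated tableaux varies continuously with $V$. Together with the linearity of the interpolation, this ensures the homotopy is jointly continuous in $V$ and $t$ and remains in $\vcrs(\coty)$; combined with Lemma~\ref{lemcontract1}, the claimed equivariant homotopy equivalence $\vcrs(\coty) \simeq \wcrs(\coty) \simeq \mb{S}^1$ follows.
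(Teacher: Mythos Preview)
Your overall strategy is close to the paper's, and the preliminaries (existence of $\Lambda_{\min}$, well-definedness of $W(\coty,\theta,\delta)$, $\wcrs(\coty)\cong\mb S^1$) are fine. However, the key step of your retraction rests on the assertion that, in the sweep of $V$ starting at $\theta(V)$, \emph{every crossing of $B_k$ precedes every crossing of $B_{k+1}$}. This is false in general. Once some batch $B_j$ has two or more (pairwise disjoint) adjacent pairs, the sweep of $V$ may process one of them, then immediately process a pair that only becomes adjacent after that single bump; such a pair lies in $B_{j+1}$, yet it may well occur in $V$ before the remaining elements of $B_j$. Concretely, with curves $1,\dots,6$ and $\Lambda_{\min}$ having rows beginning $1{:}(2,3,4,5,\dots)$, $2{:}(1,3,\dots)$, $3{:}(1,2,\dots)$, $4{:}(1,\dots)$, $5{:}(1,\dots)$, one computes $B_0=\{(1,2)\}$, $B_1=\{(1,3)\}$, $B_2=\{(1,4),(2,3)\}$, and $(1,5)\in B_3$; but the sweep order $(1,2),(1,3),(1,4),(1,5),(2,3),\dots$ is perfectly valid, so $(1,5)\in B_3$ can precede $(2,3)\in B_2$. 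Your justification that the interpolated configuration stays in $\vcrs(\coty)$ therefore breaks down as written.

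The approach is salvageable. What \emph{is} true, and suffices, is that any two crossings sharing a label occur in the same relative order in $V$ (the sweep from $\theta(V)$) and in the target $W$ (the batch order): in the $W$-construction each row is consumed entry by entry, so the $m$-th crossing along a given curve lands in a strictly later batch than the $(m{-}1)$-st. Linear interpolation between two orderings that agree on a pair keeps that pair ordered, so crossings with a common label never meet during the homotopy; the only swaps that occur involve disjoint labels and hence are independent transpositions, preserving $\coty$. The paper sidesteps this issue entirely by using an iterative clockwise ``compression'': it fixes one anchor vector, then repeatedly rotates the remaining vectors clockwise until one of them reaches angular distance $\delta$ from an already-fixed vector with a non-disjoint label, freezing it there; by construction vectors are only rotated past others with disjoint labels, so membership in $\vcrs(\coty)$ is preserved step by step without any batch-precedence claim.
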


\begin{proof}[Proof ($\coty$ non-periodic)] 
For each $V \in \vcrs(\coty)$, we will inductively define a sequence of partitioned vector configurations ${W_k \cup R_k \in \vcrs(\coty)}$ starting from $k=1$, and maps ${\psi_k: \vcrs(\coty) \times [0,1] \to \vcrs(\coty)}$ that perform each step of the induction continuously. Together, these maps will define an equivariant deformation retraction from $\vcrs(\coty)$ to $\wcrs(\coty)$. 

Let $\angle(y,x)$ denote the angular distance from $x$ to $y$ in the counter-clockwise direction, and as in the proof of Lemma \ref{lemcontract1}, let $\delta$ be the minimum angular distance between vectors of $V$ that have non-disjoint labels, so that for all $x,y \in V_i$ we have $0 < \delta \leq \angle(y,x) < 2\pi$. Fix a labeled vector $v\in V$, and set $W_1 =\{v\}$ and $R_1 = V \setminus \{v\}$. We will continuously deform $V$ to a configuration $W(v)$. Later we will make a choice of $v = \tilde v$ that depends continuously on the initial configuration $V \in \vcrs(\coty)$. 

At each step, rotate the sub-configuration $R_k$ clockwise by a continuous rotation ${\phi_{k,t} : [0,1] \to SO(2)}$, $\phi_{k,0} = \text{id}$, that decreases angular distances among the vectors of $W_k \cup \phi_{k,t} R_k$ with non-disjoint labels, until reaching the minimal rotation $\phi_k = \phi_{k,1}$ such that there is some $x \in R_k$ and $y \in W_k$ with non-disjoint labels and $ \angle(\phi_k x, y) = \delta$. Let $X$ be the set of all such $x \in R_k$, let $W_{k+1} = W_k \cup \phi_k X$ and $R_{k+1} = {\phi_k (R_k \setminus X)}$, and continue inductively until $R_K = \emptyset$. Since $\coty$ is \emph{non-layered}, such a pair $x,y$ exists provided $R_k \neq \emptyset$, and since each step removes elements from $R_k$, the process terminates in a configuration $W_K = W_K(v)$ such that for every $x \in W_K \setminus \{v\}$ there exists a $y \in W_K$ such that $\angle(x,y) = \delta$ and $x,y$ have non-disjoint labels; see \fig \ref{compressed}. Let $\psi_{k}(V,t) = W_k \cup \phi_{k,t} R_k$ for $k \leq K$. Since vectors of $\psi_{k}(V,t)$ are only rotated through other vectors with disjoint labels, this process only changes the swap pair of a configuration by elementary operations, which implies $\ct(\mc A_{\psi_{k}(V,t)}) = \ct(\mc A_V)$. Finally, let $\psi_{K+1}(V,t)$ start from $\psi_{K+1}(V,0) = W_K$ and continuously scale the angular distance of each vector from $v$ by sending a labeled vector at $u$ to $u_t$ where $ \angle(u,v) =r\delta$, $\angle(u_t,v) = tr\delta'+(1-t)r\delta$, and $\delta' = 2\pi/N$. Let $W(v) = \psi_{K+1}(V,1)$.

\begin{figure}[h!] \centering
\begin{tikzpicture}[scale=1.4]
\draw (.14,-.2)node[left]{\ft $0$};
\draw (4.74,-.2)node[left]{\ft $2\pi$};
\draw (.95,-.2)node {\ft $v$};
\fill[purple!60, opacity=.4] (.8,0) --++ (0,.3) --++ (.3, 0) --++
(0,-.3) --cycle; 
\fill[gray!60, opacity=.4] (1.9,0) --++ (0,.3) --++ (.3, 0) --++
(0,-.3) --cycle; 
\fill[gray!60, opacity=.4] (3.1,0) --++ (0,.3) --++ (.3, 0) --++
(0,-.3) --cycle; 
\fill[gray!60, opacity=.4] (4.1,0) --++ (0,.3) --++ (.3, 0) --++
(0,-.3) --cycle; 
\fill[gray!60, opacity=.4] (0.2,.3) --++ (0,.3) --++ (.3, 0) --++
(0,-.3) --cycle; 
\fill[gray!60, opacity=.4] (1.3,.3) --++ (0,.3) --++ (.3, 0) --++
(0,-.3) --cycle; 
\fill[gray!60, opacity=.4] (2.7,.3) --++ (0,.3) --++ (.3, 0) --++
(0,-.3) --cycle; 
\fill[gray!60, opacity=.4] (3.5,.3) --++ (0,.3) --++ (.3, 0) --++
(0,-.3) --cycle; 
\fill[gray!60, opacity=.4] (1,.6) --++ (0,.3) --++ (.3, 0) --++
(0,-.3) --cycle; 
\fill[gray!60, opacity=.4] (3.0,.6) --++ (0,.3) --++ (.3, 0) --++
(0,-.3) --cycle; 
\fill[gray!60, opacity=.4] (2.2,.6) --++ (0,.3) --++ (.3, 0) --++
(0,-.3) --cycle; 
\fill[gray!60, opacity=.4] (4.1,.6) --++ (0,.3) --++ (.3, 0) --++
(0,-.3) --cycle; 

\begin{scope}[yscale=.3,xscale=.15]
\draw[thick,black] (0,0) --++ (16/3,0) \us\ue --++ (4/3,0) \us\ue
--++ (12/3,0) \us\ue --++ (10/3,0) \ds\de --++ (4/3,0) \ds\de --++
(6/3,0) \ds\de --++ (2/3,0);  
\draw[thick,cyan] (0,1) --++ (4/3,0) \us\ue --++ (10/3,0) \us\ue
--++ (18/3,0) \ds\de --++ (4/3,0) \ds\de --++ (2/3,0) \ds\de --++
(14/3,0) \us\ue --++ (2/3,0);  
\draw[thick,violet] (0,2) --++ (4/3,0) \ds\de --++ (6/3,0) \ds\de --++
(16/3,0) \us\ue --++ (10/3,0) \us\ue \us\ue --++ (16/3,0) \ds\de --++
(2/3,0);  
\draw[thick,blue] (0,3) --++ (20/3,0) \ds\de \ds\de --++ (6/3,0)
\ds\de --++ (18/3,0) \us\ue --++ 
(2/3,0) \us\ue --++ (6/3,0) \us\ue --++ (2/3,0); 
\end{scope}

\begin{scope}[xshift=-1.5cm]

\draw (7.14,-.2)node[left]{\ft $0$};
\draw (11.74,-.2)node[left]{\ft $2\pi$};
\draw (7.95,-.2)node {\ft $v$};
\fill[purple!60, opacity=.4] (7.8,0) --++ (0,.3) --++ (.3, 0) --++
(0,-.3) --cycle; 
\fill[gray!60, opacity=.4] (8.4,0) --++ (0,.3) --++ (.3, 0) --++
(0,-.3) --cycle; 
\fill[gray!60, opacity=.4] (9,0) --++ (0,.3) --++ (.3, 0) --++ (0,-.3)
--cycle; 
\fill[gray!60, opacity=.4] (9.6,0) --++ (0,.3) --++ (.3, 0) --++
(0,-.3) --cycle; 
\fill[gray!60, opacity=.4] (8.1,.3) --++ (0,.3) --++ (.3, 0) --++
(0,-.3) --cycle; 
\fill[gray!60, opacity=.4] (8.7,.3) --++ (0,.3) --++ (.3, 0) --++
(0,-.3) --cycle; 
\fill[gray!60, opacity=.4] (9.3,.3) --++ (0,.3) --++ (.3, 0) --++
(0,-.3) --cycle; 
\fill[gray!60, opacity=.4] (9.9,.3) --++ (0,.3) --++ (.3, 0) --++
(0,-.3) --cycle; 
\fill[gray!60, opacity=.4] (8.4,.6) --++ (0,.3) --++ (.3, 0) --++
(0,-.3) --cycle; 
\fill[gray!60, opacity=.4] (9,.6) --++ (0,.3) --++ (.3, 0) --++
(0,-.3) --cycle; 
\fill[gray!60, opacity=.4] (9.6,.6) --++ (0,.3) --++ (.3, 0) --++
(0,-.3) --cycle; 
\fill[gray!60, opacity=.4] (10.2,.6) --++ (0,.3) --++ (.3, 0) --++
(0,-.3) --cycle; 

\begin{scope}[xscale=.15,yscale=.3]
\draw[thick,black] (140/3,0) --++ (16/3,0) \us\ue\us\ue\us\ue --++
(2,0) \ds\de\ds\de\ds\de --++ (32/3,0);  
\draw[thick,violet] (140/3,1) --++ (16/3,0) \ds\de --++ (2,0)
\us\ue\us\ue\us\ue --++ (2,0) \ds\de\ds\de --++ (26/3,0);  
\draw[thick,blue] (140/3,2) --++ (22/3,0) \ds\de\ds\de --++ (2,0)
\us\ue\us\ue\us\ue --++ (2,0) \ds\de --++ (20/3,0);  
\draw[thick,cyan] (140/3,3) --++ (28/3,0) \ds\de\ds\de\ds\de --++
(2,0) \us\ue\us\ue\us\ue --++ (20/3,0);  
\end{scope}

\end{scope}

\end{tikzpicture}
\caption{\ft Sending $V$ ({left}) to its compressed form $W_K(v)$ ({right})} 
\label{compressed}
\end{figure}
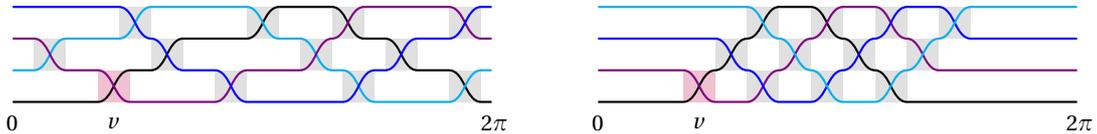

We now define $\tilde v = \tilde v(V)$ in terms of the configurations $\{ W(v) : v \in V\}$. If we rotate $W = W(v)$ by $\phi$ such that that there are no elements of $\phi W$ in the arc $(0,\phi v) \subset \mb S^1$, then the associated tableau $\Lambda$ of $\phi \mc A_W$ will have exactly one adjacent pair, which is given by the support information of $v \in V$. Since $\coty$ is \emph{non-periodic}, there is a unique choice of $v = \tilde v$ such that $\Lambda = \Lambda_\text{min}$.  With this, our final vector configuration becomes $W = W(\tilde v) = W(\coty,\theta,2\pi/N)$, where $\tilde v = (i,j,\theta)$, and the desired equivariant deformation retraction is $\psi_{K+1}\cdots\psi_1$.
\end{proof}

\paragraph{Standard configurations (periodic).}

To define $\wcrs(\coty)$ in the periodic case, set $\Lambda_0$ to be the period of $\Lambda_\text{min}$ and otherwise proceed as in the non-periodic case to obtain a configuration $W_T$ at the minimal $T$ such that $\Lambda_T = \emptyset$. Let 
\[ W(\coty,\theta,d) = \{(i,j, \xi +2\pi k/p):(i,j,\xi) \in W_T, k\in[p]\} \] 
and define $\wcrs(\coty)$ as above. Note that although $\wcrs(\coty)$ is homeomorphic to $\mb S^1$, the map $\mb S^1 \to \wcrs(\coty)$, $\theta \mapsto W(\coty,\theta,{2\pi}/{N})$ is a $p$-to-1 covering. 

\begin{proof}[Proof of Lemma \ref{lemcontract2} ($\coty$ periodic)]
We proceed the same way as in the non-periodic case, except instead of fixing a single vector $v = \tilde v$, we start by fixing all vectors $v_1,\dots,v_p = (i,j,\xi_1),\dots,(i,j,\xi_p) $ corresponding to $\Lambda_\text{min}$.  After continuously rotating certain vectors clockwise and then continuously rescaling the angular distance of each vector from the corresponding $v_i$ as above, we obtain $\psi_{K+1}(V,1) = {W(\coty',\xi_1,\delta') \cup \dots \cup W(\coty',\xi_p,\delta')}$ where $\coty'$ is the combinatorial type represented by the period $\Lambda\mathrlap{'}_\text{min}$ of $\Lambda_\text{min}$ and $\delta' = 2\pi/N$. Note that $\Lambda\mathrlap{'}_\text{min}$ is the lexicographically minimal tableau representing $\coty'$ with exactly one adjacent pair. Now, rotate each subconfiguration $W(\coty',\xi_i,\delta')$ for which $\angle(\xi_i,\xi_{i-1}) > 2\pi/p$ clockwise continuously by $\phi_{i,K+2,t}$ until the vectors $\theta_i = \phi_{i,K+2,1} \xi_i$ are spaced evenly around the circle, to obtain 
\[ \psi_{K+2}(V,1) = W(\coty,\theta_1,\delta') = W(\coty',\theta_1,\delta') \cup \dots \cup W(\coty',\theta_p,\delta') \in \wcrs(\coty) . \qedhere \]
\end{proof}

\paragraph{Contractibility}

\begin{proof}[Proof of Theorem \ref{realizations}] 
In the depth 1 case, the full realization space $\fullrs(\coty)$ is homotopic to the space of support configurations $\vcrs(\coty)$ by Lemma \ref{lemcontract1}, which is homotopic to $\mb{S}^1$ by Lemma \ref{lemcontract2}. Since these homotopies are equivariant, by Remark \ref{circprojrs}, $\projrs(\coty)$ is contractible.   

In the depth $d> 1$ case, partition $\coty$ into layers $\coty = \coty_1 \cup \dots \cup \coty_d$. If we restrict a support configurations of $\coty$ to vectors with labels in a layer $\coty_i$, then we obtain a support configuration of $\coty_i$. Hence, $\vcrs(\coty) \subset \vcrs(\coty_1)\times\dots\times\vcrs(\coty_d)$. In the other direction, if we are given support configurations $V_i \in \vcrs(\coty_i)$, then $\bigcup_{i\in[d]} V_i \in \vcrs(\coty)$. Hence $\vcrs(\coty) = \vcrs(\coty_1)\times\dots\times\vcrs(\coty_d)$, and therefore by Lemmas \ref{lemcontract1} and \ref{lemcontract2}, $\fullrs(\coty)$ is homotopic to a product of $d$ circles, and again by Remark \ref{circprojrs}, $\projrs(\coty)$ is homotopic to a product of $d{-}1$ circles.
\end{proof}

\section{Topological invariance}\label{topinv}

In this section, we prove that combinatorial type is a complete topological invariant of systems of curves on the cylinder. Specifically, we show that the associated tableaux of two systems are related by a sequence of bumps if and only if the systems are related by a self-homeomorphism of the cylinder that preserves orientation and $+\infty$. Theorem \ref{combequivtype} then folows from Remark~\ref{bumpequiv}.

For $0\leq \theta <2\pi$, let $\zeta_\theta = \{\theta\}\times \mb R^1$. We call a curve $\gamma : \mb R \to \mb S^1 \times \mb R^1$ a \df{cut-path} of a system $\mc S$ when $\gamma$ diverges to $\pm\infty$, is oriented from $-\infty$ to $+\infty$, intersects each curve of $\mc S$ exactly once, and intersects each curve one at a time away from any crossings of $\mc S$ and away from $\zeta_0$. We associate to each component $\gamma'$ of $\gamma\setminus\zeta_0$, the \df{region} of the cylinder to the left of $\gamma'$ and bounded by $\zeta_0$. We assume every component of $\gamma\setminus\zeta_0$ intersects some curve of $\mc S$; otherwise we can perform an isotopy of the cylinder that preserves $\mc S$ and removes any components that do not intersect $\mc S$. Let $M = M(\mc S,\gamma)$ denote the sum of the number of curves intersecting each region plus the number of crossings in each region. Note that $M \geq n := |\indx|$, since a cut-path intersects all curves and defines at least one region. 

We define two classes of isotopies of the cylinder, called \df{moves}, sending one system and cut-path to another, while preserving the combinatorial type of the system; see \fig~\ref{cut-moves}.  
\begin{enumerate}[(i)]
\item
If a pair of curves cross each other after $\zeta_0$ before crossing any other curve or the cut-path, then deform the curves to send this crossing through $\zeta_0$ in the clockwise direction.
\item
If a curve crosses the cut-path before intersecting any other curve and the cut-path intersects $\zeta_0$ either (a) immediately after or (b) immediately before this crossing, then deform the cut-path by sending this crossing through $\zeta_0$ in the clockwise direction.  
\end{enumerate}

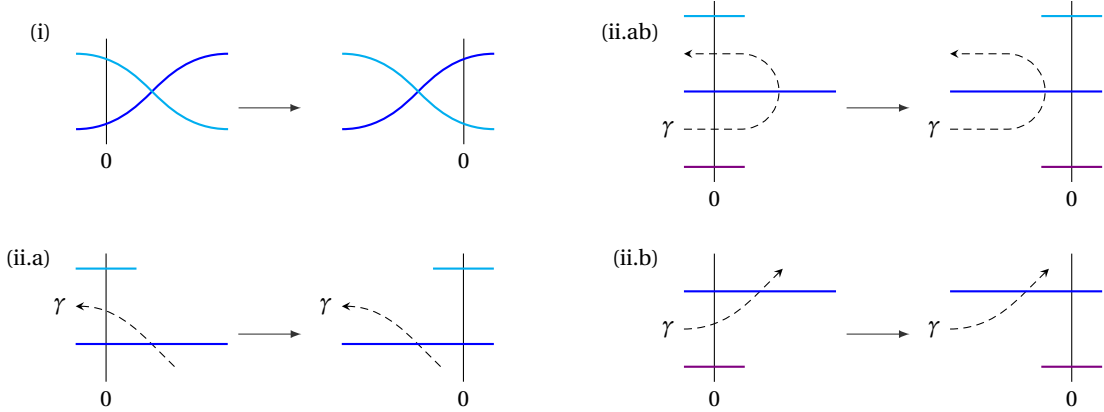
\begin{figure}[h!] \centering
\begin{tikzpicture}

\node (a0) at (0,0) 
{\begin{tikzpicture}
\draw
(.4,-.2) node[below] {\ft 0} -- (.4,1.2)
;
\draw[thick,blue]
(0,0) node[left] {\ft \color{white} $\gamma$}  \us \ue
;
\draw[thick,cyan]
(0,1) \ds \de
;
\end{tikzpicture}}
;
\node[left] at (-1,1) {\ft (i)};

\node (b0) at (3.5,0)
{\begin{tikzpicture}
\draw
(1.6,-.2) node[below] {\ft 0} -- (1.6,1.2)
;
\draw[thick,blue]
(0,0) node[left] {\ft \color{white} $\gamma$}  \us \ue 
;
\draw[thick,cyan]
(0,1) \ds \de 
;
\end{tikzpicture}}
;

\node (a1) at (8,0) 
{\begin{tikzpicture}
\draw
(.4,-.7) node[below] {\ft 0} -- (.4,1.7)
;
\draw[thick,cyan]
(0,1.5) -- (.8,1.5)
;
\draw[thick,violet]
(0,-.5) -- (.8,-.5)
;
\draw[thick,blue]
(0,.5) \g{2}
;
\draw[densely dashed,->,>=stealth]
(0,0) node[left] {\ft $\gamma$} -- (.75,0) arc (-90:90:.5) -- (0,1)
;
\end{tikzpicture}}
;
\node[left] at (7,1) {\ft (ii.ab)};

\node (b1) at (11.5,0)
{\begin{tikzpicture}
\draw
(1.6,-.7) node[below] {\ft 0} -- (1.6,1.7)
;
\draw[thick,cyan]
(1.2,1.5) -- (2,1.5)
;
\draw[thick,violet]
(1.2,-.5) -- (2,-.5)
;
\draw[thick,blue]
(0,.5) \g{2}
;
\draw[densely dashed,->,>=stealth]
(0,0) node[left] {\ft $\gamma$}  -- (.75,0) arc (-90:90:.5) -- (0,1)
;
\end{tikzpicture}}
;

\node (a2) at (0,-3) 
{\begin{tikzpicture}
\draw
(.4,0) node[below] {\ft 0} -- (.4,1.7)
;
\draw[thick,cyan]
(0,1.5) -- (.8,1.5)
;
\draw[densely dashed,<-,>=stealth]
(0,1) node[left] {\ft $\gamma$} \ds \dg{.3}
;
\draw[thick,blue]
(0,.5) \g{2}
;
\end{tikzpicture}}
;
\node[left] at (-1,-2) {\ft (ii.a)};

\node (b2) at (3.5,-3)
{\begin{tikzpicture}
\draw
(1.6,0) node[below] {\ft 0} -- (1.6,1.7)
;
\draw[thick,cyan]
(1.2,1.5) -- (2,1.5)
;
\draw[densely dashed,<-,>=stealth]
(0,1) node[left] {\ft $\gamma$} \ds \dg{.3}
;
\draw[thick,blue]
(0,.5) \g{2}
;
\end{tikzpicture}}
;

\node (a3) at (8,-3) 
{\begin{tikzpicture}
\draw
(.4,-.7) node[below] {\ft 0} -- (.4,1)
;
\draw[thick,violet]
(0,-.5) -- (.8,-.5)
;
\draw[densely dashed,->,>=stealth]
(0,0) node[left] {\ft $\gamma$}  \us \ug{.3}
;
\draw[thick,blue]
(0,.5) \g{2}
;
\end{tikzpicture}}
;
\node[left] at (7,-2) {\ft (ii.b)};

\node (b3) at (11.5,-3)
{\begin{tikzpicture}
\draw
(1.6,-.7) node[below] {\ft 0} -- (1.6,1)
;
\draw[thick,violet]
(1.2,-.5) -- (2,-.5)
;
\draw[densely dashed,->,>=stealth]
(0,0) node[left] {\ft $\gamma$} \us \ug{.3}
;
\draw[thick,blue]
(0,.5) \g{2}
;
\end{tikzpicture}}
;

\foreach \i in {0,...,3}
{
\draw[black!80,->,>=latex]
(a\i) -- (b\i)
;
}

\end{tikzpicture}
\caption{\ft The possible moves on a system and cut-path.} 
\label{cut-moves}
\end{figure}

\begin{remark}
Move (i) removes an crossing from one or more regions and changes the associated tableau of the system by bumping the pair of indices of the curves that cross at that crossing.  While move (ii) decreases the number of curves intersecting some region, possibly deleting that region, the move does not change the associated tableau.  Thus, these moves decrease the value of $M$ and preserve combinatorial type.
\end{remark}

\begin{proof}[Proof of Theorem \ref{combequivtype}]
By Theorem \ref{realizations} for any pair of systems $\mc S$, $\mc T$ of the same combinatorial type, there is a path in $\fullrs(\coty)$ from $\mc S$ to $\mc T$, so there is an isotopy sending $\mc S$ to $\mc T$.

For the other direction, suppose we are given a pair of systems $\mc S$, $\mc T$ on $\indx$ and an orientation preserving homeomorphism $\varphi: \mb S^1 \times \mb R^1 \to \mb S^1 \times \mb R^1$ that preserves $+\infty$. We will find a sequence of bumps sending the associated tableau of $\mc T$ to that of $\mc S$. Choose some $\eps >0$ sufficiently small so that $M(\mc S,\zeta_\eps) = n$.  That is, $\zeta_\eps$ is a cut-path for $\mc S$ with a single region that does not contain any crossings. And, choose $\eps$ generically so that $\eta := \varphi (\zeta_0)$ is a cut-path of $\mc T$. We will perform a sequence of moves starting from $(\mc{T},\eta)$.  

Since each move decreases the value of $M$ and $M \geq n$, we can perform a sequence of moves on $(\mc T,\eta)$ until no more moves are possible. If some region contains an crossing then we can perform move (i), and if there are multiple regions then we can perform move (ii). Therefore, we obtain a system $\mc U$ with associated tableau $\Lambda$ and cut-path $\gamma$ with a single region that contains no crossings. 
The local sequence of a curve $U_i\in \mc U$ is given by the order in which $U_i$ crosses the other curves after crossing $\gamma$, which is the same as that of $S_i \in \mc S$. 
The rows of $\Lambda$ are the local sequences of $\mc U$ in the order $\gamma$ crosses the curves of $\mc U$, which is the same as order as that of $\mc S$.  Hence $\mc U$ and $\mc S$ have the same the associated tableau $\Lambda$.  Furthermore, $\Lambda$ is obtained from the associated tableau of $\mc T$ by preforming the sequence of bumps corresponding to the above sequence of moves. Therefore $\mc U$, $\mc T$, and $\mc S$ all have the same combinatorial type. 
\end{proof}

\section{Universality} \label{universe}

In this section, we prove a universality theorem for arrangements of $k$-gons.
We actually prove the following slightly more specific result.

\begin{lemma}\label{prodrs}
For any $k$ order types $\chi_1, \dots, \chi_k$ on $[n]$, where at least two are distinct, there is a combinatorial type $\coty$ on $[n]$ such that its $k$-gon realization space $\projrs_k(\coty)$ is homotopy equivalent to $\projrs_1(\chi_1) \times \cdots \times \projrs_1(\chi_k)$.
\end{lemma}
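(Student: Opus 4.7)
The strategy is to construct $\coty$ as the combinatorial type of a ``canonical'' $k$-gon arrangement $\mc A_0$ whose bodies have their $k$ vertices distributed among $k$ widely-separated spatial clusters, so that the $j$-th cluster's $n$ vertices realize the order type $\chi_j$. Concretely, pick $k$ distinct directions $\theta_1,\dots,\theta_k \in \mb{S}^1$ and a large radius $R$; near each location $R(\cos\theta_j,\sin\theta_j)$ place a tiny cluster of $n$ points $v_{1,j},\dots,v_{n,j}$ realizing $\chi_j$ (we will handle $\projrs_1(\chi_j)=\emptyset$ separately by showing $\projrs_k(\coty)$ is also empty in that case); then set $A_i := \conv(v_{i,1},\dots,v_{i,k})$ and $\coty := \ct(\mc A_0)$. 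In the dual cylinder, $\mc A_0^*$ decomposes into $k$ angular ``blocks'', with the crossings in block $j$ encoding a Folkman--Lawrence representation of $\chi_j$ compressed into that angular sector.

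The heart of the argument is that $\coty$ forces this block structure on every $k$-gon realization. For $\mc A \in \projrs_k(\coty)$, Theorem~\ref{combequivtype} supplies a self-homeomorphism of the cylinder sending $\mc A_0^*$ to $\mc A^*$ that transports the block partition to $\mc A^*$. Each body of $\mc A$ is a $k$-gon, so its support curve is piecewise cosine with at most $k$ pieces, while the transported block structure demands at least one piece per block; a pigeonhole argument then places exactly one vertex of each body in each block. Collecting, for each $j$, the $n$ vertices active in block $j$ gives a point set $P_j$ of order type $\chi_j$, and this yields a continuous map
\[
F\colon \projrs_k(\coty) \longrightarrow \projrs_1(\chi_1) \times \cdots \times \projrs_1(\chi_k).
\]

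For a homotopy inverse $G$, given $(P_1,\dots,P_k)$, scale and position each $P_j$ inside its prescribed cluster location and set $A_i := \conv(\{\text{$i$-th point of each }P_j\})$. A contractibility argument in the spirit of Theorem~\ref{realizations} --- deformation-retracting within each fiber of $F$ to a standard cluster geometry --- then shows $F$ and $G$ are homotopy inverses. The assumption that at least two of the $\chi_j$ are distinct ensures $\coty$ is non-periodic (cf.~Lemma~\ref{period}); otherwise a $k$-fold cyclic rotation of the blocks would be a symmetry of $\coty$, making $\projrs_k(\coty)$ a $\mb{Z}_k$-quotient of the product rather than the product itself. The main obstacle is the forcing step: rigorously proving that $\coty$ really constrains every $k$-gon realization into the $k$-block structure with the specified per-block order types. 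This rests on carefully designing $\coty$ so that each angular block can be combinatorially identified (for instance by separating blocks with ``buffer'' zones of non-crossings), together with a local analysis of the piecewise-cosine support curves of $k$-gons to establish the bijective matching between cosine pieces and blocks.
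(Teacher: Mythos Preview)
Your high-level picture --- a dual decomposed into $k$ angular blocks, one order type per block --- matches the paper, but there is a genuine gap both in the construction of $\coty$ and in the forcing argument you flag as ``the main obstacle''.

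The naive cluster construction does not produce the dual you describe. If you drop a tiny copy of a realization of $\chi_j$ near $R(\cos\theta_j,\sin\theta_j)$, the dual crossings among those $n$ points occur at the two angles perpendicular to each pairwise connecting line, and those directions can lie \emph{anywhere} on $\mb S^1$; nothing forces them into the angular sector near $\theta_j$. So block $j$ of $\mc A_0^*$ will generically \emph{not} contain a full pseudoline representation of $\chi_j$, the resulting $\coty$ will depend on which realization of $\chi_j$ you chose, and $\coty$ need not determine $\chi_j$ at all. The paper fixes this by a specific projective placement: two distinguished extreme points $p_i^1,p_i^n$ of each $\chi_i$ are sent to framing points $a_i^1,a_i^n$ sitting on the convex boundary of a $2k$-point frame, and the rest of $\chi_i$ is squeezed into the triangular pocket between consecutive framing lines. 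Equivalently --- and this is what handles non-realizable $\chi_j$, which your primal-only construction cannot --- $\coty$ is \emph{defined} dually as $\cyclecat(\mc L_1\cdot\mc U\cdot\mc L_2\cdot\mc U\cdots\mc L_k\cdot\mc U)$, where $\mc L_j$ is a full pseudoline representation of $\chi_j$ and $\mc U$ is a fixed half-sorting block with $\binom{n}{2}$ crossings (so your proposed ``buffer zones of non-crossings'' is not the right separator).

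The forcing step is the real content, and the mechanism is sharper than a pigeonhole on cosine pieces. The homeomorphism of Theorem~\ref{combequivtype} transports crossings, not vertex breakpoints, so ``the block structure demands at least one piece per block'' needs a combinatorial justification. In the paper's $\coty$, \emph{every} pair of dual curves $T^s,T^t$ crosses exactly $2k$ times (once in each $\mc L_j$, once in each $\mc U$). Hence in any $k$-gon realization, $A^s$ and $A^t$ each contribute $k$ arcs to $\partial\conv(A^s\cup A^t)$, alternating; since each is at most a $k$-gon, each is exactly a $k$-gon and their vertices interleave. This yields a canonical labeling $\{a_i^s\}$ of all $kn$ vertices, whose joint order type is shown to be a fixed $\chi$ (Lemma~\ref{univpointstype}). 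The payoff is a \emph{homeomorphism} $\fullrs_k(\coty)\to\fullrs_1(\chi)$, after which $\fullrs_1(\chi)$ factors as $\fullrs_1(\omega)\times\projrs_1(\psi_1)\times\cdots\times\projrs_1(\psi_k)$ with $\omega$ the framing order type (contractible realization space) and each $\psi_j$ reducible to $\chi_j$. No fiberwise retraction in the style of Theorem~\ref{realizations} is needed.
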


Theorem~\ref{kgonrs} follows immediately.

\begin{proof}[Proof of Theorem~\ref{kgonrs}]
Fix a basic primary semialgebraic set $\mc Z$ and $k >1$.  Let $\chi_1$ be the order type of the Mnëv point set with point realization space homotopic to $\mc Z$. Let $\chi_2,\dots,\chi_k$ all be the order type of $n$ points in convex position. Note that the point realization space of $n$ points in convex position is contractible. With this, the $k$-gon realization space of $\coty$ from Lemma~\ref{prodrs} is also homotopic to $\mc Z$.
\end{proof}

To show Lemma~\ref{prodrs}, we construct a combinatorial type $\coty$ such that for every realization $\mc{A}$ of $\coty$ by $k$-gons, the vertices of each $k$-gon can be labeled.  That is, each vertex can be uniquely identified using only information encoded in the combinatorial type.  Note that this is not possible in general, as combinatorial type does not provide information about individual vertices directly.  Furthermore, we construct $\coty$ so that the order type of the vertices of $\mc{A}$ is the same in every realization and each $\chi_i$ appears as a subset of the vertices.

We define $\coty$ in two ways: in the primal we construct an arrangement of $k$-gons, 
then in the dual we construct a system of curves. 
We show in Lemma~\ref{univbodiestype} that these two constructions define the same combinatorial type. We will use both the primal and the dual construction in the proof of Lemma~\ref{prodrs}. 

Before defining $\coty$, index the order types $\chi_i$ so that the cyclic ordering $\chi_1,\chi_2,\dots,\chi_k,\chi_1,\dots$ is not periodic with period smaller than $k$.  This is possible by the assumption that there are at least two distinct order types.

\paragraph{The primal construction.}

The primal construction $\mc A$ depends on certain arbitrary choices that will not affect the combinatorial type. Assume for the primal construction that each $\chi_i$ is realizable; the non-realizable case is defined by the dual construction only.

Let $\mc A_0$ be an arrangement of $2k$ points in convex position denoted by $a_1^1\cm a_1^n\cm a_2^1\cm a_2^n,\dots\cm a_k^1\cm a_k^n$ in counter-clockwise order, such that the lines $\ell_i$ spanning $a_i^n$ and $a_{i+1}^1$ bound a convex \hbox{$k$-gon} $B$.\footnote{%
Here subscripts are indices over $\mb{Z}_k$, so in particular $\ell_k$ is the line spanning $a_k^n$ and $a_1^1$.}
Observe that $B \setminus \conv(\mc{A}_0)$ consists of $k$ triangular regions.  We construct $\mc{A}$ by placing a point set realizing one of the $\chi_i$ in each of these traingular regions, then we define the $k$-gons $A^s$ to have vertices consisting of one point from each region;
see \fig~\ref{univprimal} for an example with $n=6$, $k=4$.

Let $\chi_i$ be defined on the index set $\{({}_i^1),\dots,({}_i^n)\}$, and  
let $\mc P_i = \{p_i^1,\dots,p_i^n\}$ be a realization of $\chi_i$.  
Furthermore, let $\chi_i$ indexed so that $p_i^1$ and $p_i^2$ appear on the boundary of the convex hull of $\mc P_i$ and the local sequence of $p_i^{1}$ is $p_i^2,p_i^3,\dots,p_i^n$.  That is, the angles $\theta_i^{s}$ at $p_i^1$ from the semiline through $p_i^2$ to the semiline through $p_i^s$ are increasing in the counter-clockwise direction, $0 = \theta_i^2 < \theta_i^3< \dots< \theta_i^n < \pi$.  Note that this implies $p_i^n$ is also on the boundary of the convex hull of $\mc P_i$, which we will call the \df{convex boundary} for short.
Now augment $\mc P_i$ by two points as follows.  Let $\mc Q_i = \mc P_i \cup \{q_i^1,q_i^n\}$ such that 
${p_i^n\cm q_i^1\cm q_i^n\cm p_i^1}$ appear consecutively in counter-clockwise order on the convex boundary of $\mc Q_i$ 
and no line through any two points of $\mc P_i$ separates the points $q_i^1, q_i^n, p_i^1$. 
Note that this uniquely determines 
the order type of $\mc Q_i$; see \fig~\ref{univprimal} (left).

\begin{figure}[h!] \centering
\begin{tikzpicture}[scale = .69]
\begin{scope}[rotate=8]
\fill[black!10!white] (-1,4) -- (-2.4,1.2) --(0,0) --cycle;
\fill[black!10!white] (1.5,9) -- (0,6) --(3,8) --cycle;
\fill[black!10!white] (84/11,54/11) -- (6,6) --(7,3) --cycle;
\fill[black!10!white] (36/7,-18/7) -- (6,0) --(2,-1) --cycle;

\draw[thin, black!30] 
(-3,3/2) --++ (9,-4.5)
(-3,0) --++ (5,10)
(5,-3) --++ (3,9)
(0,10) --++ (9,-6)
;

\fill (-1.4,2.4) coordinate (a1'2) circle [radius =.055];
\fill (-1.1,2) coordinate (a1'3) circle [radius =.055];
\fill (-1,1) coordinate (a1'4) circle [radius =.055];
\fill (-0.7,0.75) coordinate (a1'5) circle [radius =.055];
\fill (5.3,-1.4) coordinate (a2'4) circle [radius =.055];
\fill (4.1,-1.8) coordinate (a2'2) circle [radius =.055];
\fill (4.5,-1.6) coordinate (a2'3) circle [radius =.055];
\fill (5.6,-0.75) coordinate (a2'5) circle [radius =.055];
\fill (6.4,5.2) coordinate (a3'5) circle [radius =.055];
\fill (6.7,4.6) coordinate (a3'4) circle [radius =.055];
\fill (7,4.3) coordinate (a3'3) circle [radius =.055];
\fill (7.1,3.9) coordinate (a3'2) circle [radius =.055];
\fill (1.5,7.7) coordinate (a4'3) circle [radius =.055];
\fill (2.2,8) coordinate (a4'2) circle [radius =.055];
\fill (0.8,7.1) coordinate (a4'5) circle [radius =.055];
\fill (1.2,7.3) coordinate (a4'4) circle [radius =.055];

\foreach \k in {2,3,4,5}
{\draw[thin,gray]
(a4'\k)
\foreach \j in {1,2,3,4}
{ -- (a\j'\k)};}

\foreach \k in {2,3,4,5}
{
\foreach \j in {1,2,3,4}
{
\fill (a\j'\k) circle [radius =.055]
;}}


\draw[thick, violet] (0,0) --++ (6,0) --++(0,6)--++ (-6,0) --cycle; 
\draw[very thick, blue] (2,-1) -- (7,3) -- (3,8) -- (-1,4) --cycle;

\draw (-1.1,1.4) node[left]{\ft $\chi_1$};
\draw  (5.3,-2) node[left]{\ft $\chi_2$};
\draw (7.6,4.7)node[left]{\ft $\chi_3$};
\draw (2.1,8.2)node[left]{\ft $\chi_4$};
\draw (0,-.3)node[left]{\ft $a^6_1$};
\draw (6.9,-.1)node[left]{\ft $a^6_2$};
\draw (6.9,6.1)node[left]{\ft $a^6_3$};
\draw (0,6)node[left]{\ft $a^6_4$};
\draw (-1,4.1)node[left]{\ft $a^1_1$};
\draw (2,-1.2)node[left]{\ft $a^1_2$};
\draw (7.1,2.9)node[right]{\ft $a^1_3$};
\draw (3.1,8.1)node[right]{\ft $a^1_4$};
\fill[violet] (0,0) circle [radius = .06];
\fill[violet] (6,0) circle [radius = .06];
\fill[violet] (6,6) circle [radius = .06];
\fill[violet] (0,6) circle [radius = .06];
\fill[blue] (2,-1) circle [radius = .06];
\fill[blue] (7,3) circle [radius = .06];
\fill[blue] (3,8) circle [radius = .06];
\fill[blue] (-1,4) circle [radius = .06];
\end{scope}

\begin{scope}[xshift = -6.6cm, yshift = 4.5cm]
\draw[thin, black!20] 
(0.2,-0.8) -- (-1.2,4.8)
(0.4,-.4) -- (-4.3,4.3)
(1/4,-3/16) -- (-5,15/4)
(0.4,-0.2) -- (-5.6,2.8)
(-6,0) -- (.6,0)
(-4,-2.8) -- (-4,4.7)
(-5.5,2.5) -- (-.4,4.2)
(-4.4,-2.8) -- (-.5,5)
(-11/2,-3/2) -- (-7/2,9/2)
(-6,2) -- (.6,2)
(-17/3,-4/3) -- (-3,4)
(-6,-2/3) -- (0,10/3)
(-11/2,-1/2) -- (0,5)
(-5,4/3) -- (0,14/3)
(-5,7/2) -- (1/2,3/4)
;
\draw[thick, violet] (-1.3,-1.6) --(-5,0);
\draw[very thick, blue] (-3,-5/3) --(0,0);
\draw (0,.3) node[right]{\ft $p^1_1$};
\draw (-1,4.1) node[right]{\ft $p^2_1$};
\draw (-2,2) node[right]{\ft $p^3_1$};
\draw (-4,3) node[left]{\ft $p^4_1$};
\draw (-4,2) node[left]{\ft $p^5_1$};
\draw (-5,0.2) node[left]{\ft $p^6_1$};
\draw (-1.3,-1.6) node[right]{\ft $q^6_1$};
\draw (-3,-5/3) node[left]{\ft $q^1_1$};
\draw[fill] (-1,4) circle [radius =.055];
\draw[fill] (-2,2) circle [radius =.055];
\draw[fill] (-4,3) circle [radius =.055];
\draw[fill] (-4,2) circle [radius =.055];
\fill[blue] (0,0) circle [radius =.06];
\fill[violet] (-5,0) circle [radius =.06];
\fill[blue] (-3,-5/3) circle [radius =.06];
\fill[violet] (-1.3,-1.6) circle [radius =.06];
\draw[<-, >= latex, black!50] (4.2,-.5) arc [radius=6, start
angle=62, end angle= 95]; 
\draw[->, >= latex, black!50] (-2.8,-1.9) arc 
[radius=13, start angle=225, end angle= 276];
\draw[->, >= latex, black!50] (-0.45,-1.6) arc 
[radius=11, start angle=281, end angle= 312];
\draw[->, >= latex, black!50] (-5,-.4) arc 
[radius=8, start angle=200, end angle= 293];
\end{scope}
\end{tikzpicture}
\caption{\ft The point set $\mc P_1$ on the left is
  mapped to points on the
  right by the projective transformation determined by 
$p^1_1\mapsto a^1_1,\  q^1_1 \mapsto a^1_{2},\ p^6_1 \mapsto a^6_1,\ q^6_1 \mapsto a^6_4 $}  \label{univprimal}
\end{figure}
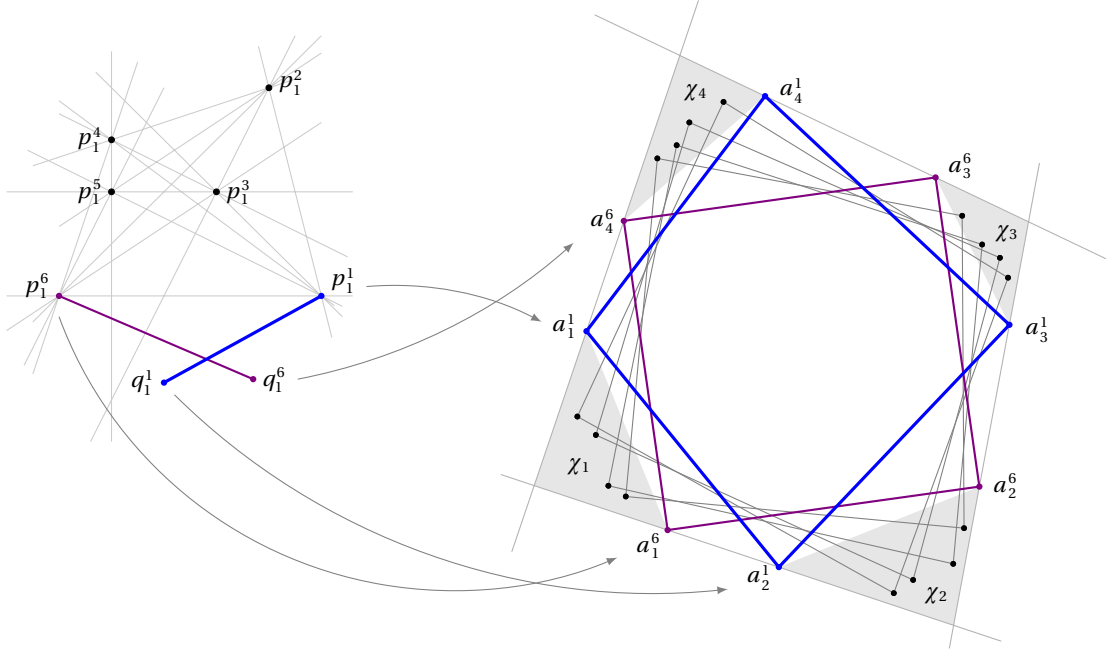

Now define projective transformations $\phi_i$ such that \[ \phi_i(q_i^n) = a_{i-1}^n, \quad \phi_i(p_i^1) = a_i^1, \quad \phi_i(p_i^n) = a_i^n, \quad \phi_i(q_i^1) = a_{i+1}^1 , \] and let $\mc P = \{a_1^1,a_1^2,\dots,a_2^1,\dots,a_n^n\}$ where $a_i^s = \phi_i(p_i^s)$. Note that $\phi_i$ sends $\mc Q_i$ into $B$ and preserves the orientation of $q_i^1, q_i^n, p_i^1$, so $\mc Q_i \projeq \{a_i^1,\dots,a_i^n,a_{i-1}^n,a_{i+1}^1\}$ with appropriate relabeling. Let $\mc A = \{A^1,\dots,A^{n}\}$  where $A^s = \conv(\{a_1^s,a_2^s,\dots,a_k^s\})$, and 

Finally, let $\coty$ denote the combinatorial type of $\mc{A}$, let $\chi$ denote the order type of $\mc P$, and let $\psi_i$ denote the order type of $\mc{Q}_i$.

\FloatBarrier

\subsection{Path systems}

We call the graph of some indexed family of functions defined over an interval a \df{path system}. 
We say two path systems are \df{equivalent} when the are related by an orientation preserving self-homeomorphism of the plane that also preserves indices and the orientations of the paths. 
We will always assume that the end-points of a path system are all distinct, and 
that the paths satisfy the same genericity conditions given in Subsection~\ref{sect:generic} for systems of curves. 
For path systems $\mc L_1,\mc L_2$ over intervals $I_1,I_2 \subset \mb R$ with the same number of paths, the concatenation $\mc L_1 \cdot \mc L_2$ is the path system obtained by identifying the right edge of $I_1 \times \mb R$ with the left edge of $I_2 \times \mb R$ by a homeomorphism sending the right end-points of $\mc L_1$ to the left end-points of $\mc L_2$.  Here indices must be dealt with appropriately. 
If the left end-points of $\mc L_1$ appear in the same order as the right end-points, then we may form a system of curves $\cyclecat \mc L_1$ by identifying the left and right edges of $I_1 \times \mb R$ by a homeomorphism that identifies the left and right end-points of each path in $\mc L_1$. 
Let $\vertflip \mc L_1$ denote the path system obtained by flipping $\mc L_1$ vertically by the map $(x,y) \mapsto (x,-y)$.  
Given an order type $\chi$, we say a path system $\mc L$ is a \df{pseudoline representation} of $\chi$ when $\mc S = {\cyclecat (\mc L \cdot \vertflip \mc L)}$ is an orientable system with order type $\chi$ as in Theorem~\ref{orientable}.  
Note that classes of equivalent path pseudoline representations of $\chi$ bijectively correspond to tableau representations of $\chi$. 
We say an element $i$ is on the \df{convex boundary} of $\chi$ when the corresponding curve $S_i$ appears on the upper envelope of a corresponding system $\mc S$. 

\begin{remark}\label{firstpath}
For each element $i$ on the convex boundary of an order type $\chi$, there is a unique class of equivalent pseudoline representations $\mc L$ where $L_i$ starts as the top most path and crosses all other paths, thereby going to the bottom, before any other crossings occur. 
\end{remark}

\subsection{The dual construction}

Let $\chi_i$ be an order type on elements $\{({}_i^1),\dots,({}_i^n)\}$ indexed as in the primal construction, and 
let $\mc L_i$ be a pseudoline representation of $\chi_i$ with paths $L_i^1,\dots,L_i^n$ such that $L_i^1$ starts at the top and crosses all other paths first as in Remark \ref{firstpath}. 
Let $\mc C = \{C_1,\dots,C_k\}$ be the dual system of $k$ points in convex position indexed in counter-clockwise order, and observe that each curve $C_i$ appears exactly once on the upper envelope and once on the lower envelope of $\mc C$.
Let $\mc S$ be a system of curves where each curve $C_i \in \mc C$ is replaced by $n$ curves $\{S^1_i,\dots,S^n_i\}$ in a small tubular neighborhood about $C_i$ crossing to form a copy of $\mc L_i$ above all other curves of $\mc S$ and a copy of $\vertflip \mc L_i$ below all other curves of $\mc S$. 
Let $T^s$ be the upper envelope of the curves $S_1^s,\dots,S_k^s$, and let $\mc T = \{T^1,\dots,T^n\}$.
Equivalently, 
let $\mc U$ be the path system of size $n$ where each path from bottom to top crosses all paths below itself (beginning with the bottom path crossing no other paths and ending with the top path crossing all other paths), and let $\mc T= \cyclecat(\mc L_1\cdot \mc U\cdot \mc L_2\cdot \mc U \cdots \mc L_k\cdot \mc U)$. 
See \fig~\ref{univdual} for an example with $n=6$, $k=4$. 

Finally, let $\coty$ denote the combinatorial type of $\mc{T}$ and let $\chi$ denote the order type of $\mc{S}$.

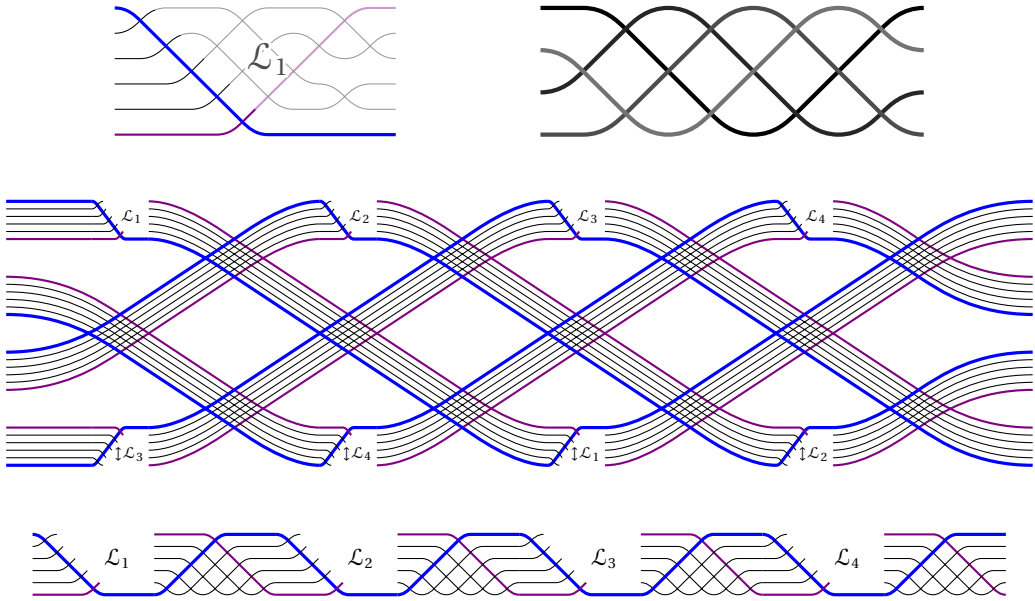
\begin{figure}[htp!]

\centering

\begin{tikzpicture}[scale=.8]

\begin{scope}[xshift=-7cm,yscale=-.42,xscale=.42,yshift=-6cm]

\draw[thick,violet]
(0,6) \g{4} \ds \dg{4} \de \g{1}
;

\draw
(0,5) \g{3} \ds \dg{2} \de \us \ue \ds \de
(0,4) \g{2} \ds \dg{2} \de \g{2} \us \ug{1} \ue
(0,3) \g{1} \ds \de \us \ug{2} \ue \g{1} \ds \de \g{1}
(0,2) \ds \de \g{2} \us \ug{2} \ue \us \ue \g{1}
;

\draw[very thick, blue]
(0,1) \us \ug{4} \ue \g{5}
;

\fill[white,opacity=.6]
(1,.5) -- ++(5,5) -- ++(5.5,0) -- ++(0,-5) -- cycle  
;

\node[black!70] at (6,3) {\contour{white}{\Large $\mc L_1$}};

\end{scope}

\begin{scope}[scale=.7,yshift=-1cm]

\draw[ultra thick, black]
(0,4) \g{1} \ds \dg{2} \de \us \ug{2} \ue 
;
\draw[ultra thick, black!85]
(0,2) \us \ug{1} \ue \ds \dg{2} \de \us \ue 
;
\draw[ultra thick, black!70]
(0,1) \g{1} \us \ug{2} \ue \ds \dg{2} \de
;
\draw[ultra thick, black!55]
(0,3) \ds \dg{1} \de \us \ug{2} \ue \ds \de
;

\end{scope}

\end{tikzpicture}

\vspace{.8cm}

\begin{tikzpicture}[xscale=1.5]

\draw[thick,violet]
(0,4) \g{.75} coordinate (a1_0_1) ++(.5,.5)  \dsx \dg{2} \dex coordinate (a1_0_2) ++(.5,-.5) \usx \ug{2} \ue 
(0,2) \us \ug{1} \uex coordinate (a2_0_1) ++(.5,.5) \dsx \dg{2} \dex coordinate (a2_0_2) ++(.5,-.5) \usx \ue 
(0,1.5) \g{.75} coordinate (a3_0_2) ++(.5,-.5) \usx \ug{2} \uex coordinate (a3_0_1) ++(.5,.5) \dsx \dg{2} \de
(0,3.5) \ds \dg{1} \dex coordinate (a4_0_2) ++(.5,-.5) \usx \ug{2} \uex coordinate (a4_0_1) ++(.5,.5) \dsx \de 
;
\foreach \x in {1,2,3,4}
{
\draw[thin]
(0,{4+.1*\x}) \g{.75} coordinate (a1_\x_1) ++(.5,0) \dsx \dg{2} \dex coordinate (a1_\x_2) ++(.5,0) \usx \ug{2} \ue
(0,{2+.1*\x}) \us \ug{1} \uex coordinate (a2_\x_1) ++(.5,0) \dsx \dg{2} \dex coordinate (a2_\x_2) ++(.5,0) \usx \ue 
(0,{1+.1*\x}) \g{.75} coordinate (a3_\x_2) ++(.5,0) \usx \ug{2} \uex coordinate (a3_\x_1) ++(.5,0) \dsx \dg{2} \de 
(0,{3+.1*\x}) \ds \dg{1} \dex coordinate (a4_\x_2) ++(.5,0) \usx \ug{2} \uex coordinate (a4_\x_1) ++(.5,0) \dsx \de  
;
}
\draw[very thick,blue]
(0,4.5) \g{.75} coordinate (a1_5_1) ++(.5,-.5) \dsx \dg{2} \dex coordinate (a1_5_2) ++(.5,.5) \usx \ug{2} \ue 
(0,2.5) \us \ug{1} \uex coordinate (a2_5_1) ++(.5,-.5) \dsx \dg{2} \dex coordinate (a2_5_2) ++(.5,.5) \usx \ue 
(0,1) \g{.75} coordinate (a3_5_2) ++(.5,.5) \usx \ug{2} \uex coordinate (a3_5_1) ++(.5,-.5) \dsx \dg{2} \de
(0,3) \ds \dg{1} \dex coordinate (a4_5_2) ++(.5,.5) \usx \ug{2} \uex coordinate (a4_5_1) ++(.5,-.5) \dsx \de 
;

\begin{scope}[yscale=.1,xscale=.05]
\foreach \k in {1,2,3,4}
{
\draw[very thick,blue]
(a\k_5_2) \us \ug{4} \ue \g{4} 
;
\draw[thin]
(a\k_1_2) \ds \de
(a\k_2_2) \g{1} \ds \dg{.5}
(a\k_3_2) \g{2} \ds \dg{.5}
(a\k_4_2) \g{3} \ds \dg{.5}
;
\draw[thick,violet]
(a\k_0_2) \g{4} \ds \dg{.5} 
;
\path (a\k_2_2) ++(6.6,-.5) node {\tiny $\vertflip\! \mc L_{\k}$}
;
}

\foreach \k in {1,2,3,4}
{
\draw[very thick,blue]
(a\k_5_1) \ds \dg{4} \de \g{4} 
;
\draw[thin]
(a\k_4_1) \us \ue
(a\k_3_1) \g{1} \us \ug{.5}
(a\k_2_1) \g{2} \us \ug{.5}
(a\k_1_1) \g{3} \us \ug{.5}
;
\draw[thick,violet]
(a\k_0_1) \g{4} \us \ug{.5} 
;
\path (a\k_3_1) ++(7,0) node {\tiny $\mc L_{\k}$}
;
}
\end{scope}

\end{tikzpicture}

\vspace{.8cm}

\begin{tikzpicture}[scale=.16]

\foreach \x in {}
{
\draw[very thick, blue]
(\x,6) \g{1}
;
\draw
(\x,5) \g{1} (\x,4) \g{1} (\x,3) \g{1} (\x,2) \g{1}
;
\draw[thick,violet]
(\x,1) \g{1}
;
}

\foreach \x in {}
{
\draw[very thick, blue]
(\x,1) \g{1}
;
\draw
(\x,5) \g{1} (\x,4) \g{1} (\x,3) \g{1} (\x,2) \g{1}
;
\draw[thick,violet]
(\x,6) \g{1}
;
}

\foreach \x in {0,20,40,60}
{
\draw[very thick,blue]
(\x,6) \ds \dg{4} \de \g{4} 
;
\draw[thin]
(\x,5) \us \ue
(\x,4) \g{1} \us \ug{.5}
(\x,3) \g{2} \us \ug{.5}
(\x,2) \g{3} \us \ug{.5}
;
\draw[thick,violet]
(\x,1) \g{4} \us \ug{.5} 
;
}

\foreach \x in {10,30,50,70}
{
\draw[thick,violet]
(\x,6) \g{4} \ds \dg{4} \de
;
\draw
(\x,5) \g{3} \ds \dg{3} \de \us \ue 
(\x,4) \g{2} \ds \dg{2} \de \us \ug{1} \ue \g{1}
(\x,3) \g{1} \ds \dg{1} \de \us \ug{2} \ue \g{2}
(\x,2) \ds \de \us \ug{3} \ue \g{3}
;
\draw[very thick, blue]
(\x,1) \us \ug{4} \ue \g{4}
;
}

\path
(07,4) node {\ft $\mc L_1$}
(27,4) node {\ft $\mc L_2$}
(47,4) node {\ft $\mc L_3$}
(67,4) node {\ft $\mc L_4$}
;

\end{tikzpicture}

\caption{%
\textbf{Top left:} The pseudoline representation $\mc L_1$ of $\chi_1$. 
\textbf{Top right:} The system $\mc C$. \newline
\textbf{Center:} The system $\mc S$.
\textbf{Bottom:} The system $\mc T$ of combinatorial type $\coty$.
}\label{univdual}
\end{figure}

\begin{lemma}\label{univpointstype}
$\mc P^*$ and $\mc S$ are orientable and have the same well defined combinatorial type $\chi$.
\end{lemma}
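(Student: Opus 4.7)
The plan is to verify the three claims of the lemma in sequence --- orientability of both $\mc P^*$ and $\mc S$, coincidence of their combinatorial types, and well-definedness with respect to the choices in either construction. Geometrically, $\mc P^*$ should be viewed as a perturbation of the dual system $\mc C$ of $k$ convex-position points: each curve $C_i$ gets resolved into a tight bundle of $n$ curves $(a_i^s)^*$ whose internal crossings are governed by $\chi_i$, which is precisely how $\mc S$ is manufactured.

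Orientability is short on both sides. For $\mc S$, each triple of curves behaves like three generic points: triples inside one cluster inherit orientability from the pseudoline representation $\mc L_i$ of the simple order type $\chi_i$ (via Theorem~\ref{orientable}); triples across two clusters are controlled by the fully-crossing block $\mc U$; triples across three clusters shadow three curves of the convex-position dual $\mc C$. For $\mc P^*$, orientability is equivalent to $\mc P$ having no three collinear points, which can be arranged by a generic perturbation of the $\mc P_i$ and $\phi_i$ that leaves each $\psi_i$ unchanged.

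To identify $\mc P^*$ with $\mc S$, I would decompose $\mb S^1$ into $k$ arcs $J_i^+$ on which the $n$ curves $\{(a_i^s)^*\}_{s\in[n]}$ occupy the top of $\mc P^*$ (equivalently, $\theta$ points outward across the edge of $B$ containing $\phi_i(\mc Q_i)$), $k$ antipodal arcs $J_i^-$ on which they occupy the bottom, and $2k$ transition arcs interpolating between them. A direct support-function computation shows that inside $J_i^+$ the crossings of the bundle realize a pseudoline representation of $\chi_i$ in the canonical class of Remark~\ref{firstpath}, hence equivalent to $\mc L_i$; inside $J_i^-$ they realize $\vertflip \mc L_i$; and in each transition arc every strand of one cluster crosses every strand of the next exactly once, executing the block $\mc U$. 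Concatenating these pieces cyclically yields an orientation- and $+\infty$-preserving self-homeomorphism of the cylinder carrying $\mc P^*$ onto $\mc S$, so by Theorem~\ref{combequivtype} the two systems share the same combinatorial type $\chi$. Well-definedness is then automatic: Remark~\ref{firstpath} pins $\mc L_i$ down up to equivalence on the dual side, and Theorem~\ref{orientable} identifies any two primal realizations of the same data.

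The main obstacle will be the transition arcs. One has to verify that every cross-cluster crossing between clusters $i$ and $i{+}1$ lies inside the designated transition region and does not leak into any adjacent $J_j^{\pm}$, and also that the bottom-to-top order in which these $n^2$ crossings occur really realizes the totally-crossing block $\mc U$ rather than some other permutation. This reduces to an angle computation that exploits the convexity of $B$ together with the specific role of the auxiliary vertices $q_i^1, q_i^n$ built into the definition of $\mc Q_i$ --- these were introduced precisely to make the clusters interface cleanly at each edge of $B$, which is exactly what the transition analysis requires.
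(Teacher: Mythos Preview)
Your plan is workable in principle but takes a longer route than necessary, and the transition-arc step you flag as the main obstacle is both more involved than you indicate and, by the paper's argument, entirely avoidable.

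The simplification you are missing is that once orientability is established, Theorem~\ref{orientable} tells you that two orientable systems have the same combinatorial type if and only if they have the same \emph{order type}, and order type is determined locally by the orientation of every triple. So rather than building a global self-homeomorphism of the cylinder and invoking Theorem~\ref{combequivtype}, the paper simply runs the same three-way case split you already set up for orientability --- triples in three distinct clusters, in two clusters, or in one cluster --- and checks in each case that the orientation on the $\mc P$ side agrees with the orientation on the $\mc S$ side. This does double duty: orientability and equality of order types are verified simultaneously, in a few lines, and well-definedness follows because the orientation in each case is pinned down by $\chi_u$ or by the convex-position frame $\mc C$, independent of the auxiliary choices.

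Two smaller points. First, your claim that $\mc P$ is in general position only ``after a generic perturbation'' undersells the construction: the paper's case analysis shows directly that no triple is degenerate, so no perturbation is needed. Second, the block $\mc U$ belongs to the description of $\mc T$, not $\mc S$; in $\mc S$ the inter-cluster crossings are the $\binom{k}{2}$ grid patterns where the tubular neighborhoods of $C_i$ and $C_j$ meet, not a single $\mc U$ between consecutive clusters. Your arc decomposition would have to account for all of these, which is exactly the bookkeeping the triple-by-triple argument sidesteps.
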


\begin{proof}
Consider the orientations of a triple $a_u^r,a_v^s,a_w^t \in \mc{P}$, and the corresponding curves $S_u^r\cm S_v^s\cm S_w^t \in \mc S$.  If $u,v,w$ are distinct, then these points have the same orientation as $a_u^1,a_v^1,a_w^1$, since each $a_i^x$ among these points is between $a_i^1$ and $a_i^n$ in the local sequence of $a_{i-1}^n$ among $\phi_i(\mc Q_i)$, which implies $a_i^x$ is in convex position together with $\mc A_0$ between $a_i^1$ and $a_i^n$ in counter-clockwise order.  Furthermore, the curves have the same orientation as $C_u,C_v,C_w$, which is the same as that of $a_u^1\cm a_v^1\cm a_w^1$, since each $S_i^x$ is in a small tubular neighborhood about $C_i$. If $u=v=w$ then the orientation of $a_u^r\cm a_u^s\cm a_u^t$ as well as that of $S_u^r,S_u^s,S_u^t$ is determined by $\chi_u$.  If $u=v$ and $w$ is distinct, then both the points and the curves have the same orientation as $p_u^r,p_u^s,p_u^1$. In any case, each triple of $\mc S$ is orientable and its orientation is fixed and is the same as that of $\mc P^*$, and since order types are completely determined by the orientation of each triple, this determines the order type of $\mc P$ and $\mc S$. 
\end{proof}

\begin{lemma}\label{univbodiestype}
$\mc A^*$ and $\mc T$ have the same well defined combinatorial type $\coty$.
\end{lemma}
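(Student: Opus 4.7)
My plan is to reduce Lemma~\ref{univbodiestype} to Lemma~\ref{univpointstype} plus the observation that taking upper envelopes of labeled subfamilies is a purely combinatorial operation on labeled systems.

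The starting identifications are: $A^{s*}$ is the upper envelope of $a_1^{s*},\dots,a_k^{s*}$, because the support function of $\conv(a_1^s,\dots,a_k^s)$ is the pointwise maximum of the individual point support functions; and $T^s$ is, by construction, the upper envelope of $S_1^s,\dots,S_k^s$. Thus both $\mc A^*$ and $\mc T$ arise by taking upper envelopes, grouped by the body index $s\in[n]$, of the auxiliary labeled systems $\mc P^*=\{a_i^{s*}\}$ and $\mc S=\{S_i^s\}$, each indexed by $[k]\times[n]$.

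By Lemma~\ref{univpointstype}, $\mc P^*$ and $\mc S$ have the same well-defined combinatorial type $\chi$. It then remains to argue that $\ct$ of a grouped upper envelope is a function of $\ct$ of the underlying labeled system. The reason is that $\ct(\mc P^*)$ determines the vertical ordering of all curves of $\mc P^*$ above each angular position, and in particular singles out, for each group $\{a_i^{s*}: i\in[k]\}$, the currently dominant member. A crossing of $A^{s*}$ with $A^{t*}$ at angle $\theta$ occurs precisely at a crossing of $\mc P^*$ between some $a_i^{s*}$ and some $a_j^{t*}$ at which both are currently on top of their respective groups; reading these events in order from the swap sequence of $\mc P^*$ yields the swap sequence of $\mc A^*$. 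The identical procedure applied to $\mc S$ yields the swap sequence of $\mc T$. Since the input combinatorial types agree, so do the outputs: $\ct(\mc A^*)=\ct(\mc T)=\coty$.

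Well-definedness of $\coty$ is then immediate: by Remark~\ref{bumpequiv} the combinatorial type of $\mc T$ depends on $(\chi_1,\dots,\chi_k)$ only through the bump-equivalence classes of the $\mc L_i$, and the equality $\ct(\mc A^*)=\ct(\mc T)$ shows that $\ct(\mc A^*)$ is likewise independent of the arbitrary choices ($\mc P_i$, $\phi_i$, $\mc A_0$, and $B$) made in the primal construction. The main obstacle I anticipate is bookkeeping around genericity: the naive read-off can produce simultaneous events whenever a within-group crossing of $\mc P^*$ coincides angularly with the hand-off of dominance within another group, and these need to be resolved by a generic perturbation carried out compatibly on both $\mc P^*$ and $\mc S$ before their swap sequences are compared. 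Once this technicality is handled, the reduction to Lemma~\ref{univpointstype} is essentially mechanical.
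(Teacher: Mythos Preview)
Your argument is correct and is essentially an expanded version of the paper's one-line proof, which reads: ``This follows from Lemma~\ref{univpointstype} and the fact that the support function of a polygon is the upper envelope of the support functions of its vertices.'' You have simply unpacked why the upper-envelope operation on labeled systems is determined at the level of combinatorial type and flagged the genericity bookkeeping that the paper leaves implicit.
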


\begin{proof}
This follows from Lemma~\ref{univpointstype} and the fact that the support function of a polygon is the upper envelope of the support functions of its vertices.
\end{proof}

\paragraph{\textit{Proof of Lemma~\ref{prodrs}}.}

First observe that $\fullrs_1(\chi_i)$ is homotopic to $\fullrs_1(\psi_i)$, since $\psi_i$ is reducible by $q_i^1$ and $\omega_i := \psi_i \setminus q_i^1$ is reducible by $q_i^n$ \cite[Lemma 8.2.1]{OMS}.  That is, for any realization $\mc P_i$ of $\chi_i$, there is a non-empty convex region where $\mc P_i$ can be augmented by a point $q_i^n$ to obtain a realization of $\omega_i$, and the fibers of the deletion map $\delta : \fullrs_1(\omega_i) \to \fullrs_1(\chi_i)$ defined by deleting the point $q_i^n$ are given by this convex region, which implies $\fullrs_1(\omega_i)$ and $\fullrs_1(\chi_i)$ are homotopic. Likewise, $\fullrs_1(\psi_i)$ and $\fullrs_1(\omega_i)$ are homotopic.

Let $\omega$ denote the non-simple order type of the point set $\mc A_0 \cup \{b_1,\dots,b_k\}$ where $b_i$ is the meet of lines $\ell_{i-1},\ell_i$.
We claim that $\fullrs_1(\chi)$ is homeomorphic to $\fullrs_1(\omega) \times \projrs_1(\psi_1) \times \dots \times \projrs_1(\psi_k)$.
To see this, let
\[ \pi_0 : \fullrs_1(\chi) \to \fullrs_1(\omega) \quad \text{and} \quad \pi_i : \fullrs_1(\chi) \to \projrs_1(\psi_i)\]
where $\pi_0$ is defined by restricting to the points $a_1^1\cm a_1^n\cm b_1,\dots\cm a_n^1\cm a_n^n\cm b_n$,
and $\pi_i$ is defined by restricting to the points $a_i^1,\dots,a_i^n,a_{i-1}^n,a_{i+1}^1$ respectively relabeled as $p_i^1,\dots,p_i^n,q_i^n,q_i^1$ and identifying realizations that are projectively equivalent. Let $\pi = \pi_0 \times\dots\times \pi_k$.  The construction of $\mc P$ in the primal definition of $\coty$ shows that $\pi$ is surjective, and $\pi$  
is injective since the points of $\mc Q_i$ are uniquely determined by the projective equivalence class to which they belong and the positions of the points $q_i^n\cm p_i^1\cm p_i^n\cm q_i^1$, which are fixed by a realization of $\omega$. Hence $\pi$ is a bijection, and since it and its inverse are continuous, the claim holds. 

Next, we claim $\fullrs_k(\coty)$ is homeomorphic to $\fullrs_1(\chi)$. Let $\varphi : \fullrs_1(\chi) \to \fullrs_k(\coty)$ by taking convex hulls as in the primal definition of $\coty$.  Assume $\fullrs_k(\coty)$ is non-empty and consider $\mc A = \{A^1,\dots,A^n\} \in \fullrs_k(\coty)$; the existence of $\varphi$ implies that if $\fullrs_k(\coty)$ where empty then $\fullrs_1(\chi)$ would also be empty.  We will show that there is a unique way of indexing the vertices of each body $A^s$ by $\{a_1^s,\dots,a_n^s\}$ so that together they realize $\chi$, which implies there is the unique point set $\mc V \in \fullrs_1(\chi)$ for which $\varphi(\mc V) = \mc A$.

We will first see that such an indexing exists, which implies $\varphi$ is surjective.  Notice that $A^{1*}$ and $A^{t*}$ cross $2k$ times for each $t \neq 1$, so $A^1$ and $A^t$ each appear $k$ times on the boundary of $\conv(A^1 \cup A^t)$, which implies each $A^s$ must be a $k$-gon and the vertices of $A^1$ and $A^t$ are in convex position.  First choose some indexing of the vertices of $A^1$ by $v_1^1,\dots,v_k^1$ in counter-clockwise order, and then index the vertex of each $A^t$ between $v_i^1$ and $v_{i+1}^1$ by $v_i^t$, and let $\mc V_i = \{v_i^1,\dots,v_i^n\}$.  Now let $\mu_i^t$ be the outward normal direction of the common supporting tangent of $A^1$ and $A^t$ through $v_i^1$ and $v_i^t$, and let $\xi_i^t$ be the outward normal direction of the common supporting tangent through $v_i^t$ and $v_{i+1}^1$.  The support curve of the point $v_i^t$ coincides with $A^{t*}$ on the half open interval $[\mu_i^t,\xi_i^t) \subset \mb S^1$, and by Lemma~\ref{univbodiestype}, has an crossing with $A^{s*}$ in $[\mu_i^s,\xi_i^s)$ corresponding to a common supporting tangent through $a_i^t, a_i^s$ for $t,s$ distinct.  This fixes a pseudoline representation for each $\mc V_i$, which is equivalent to $\mc P^*_{i-h}$ for an appropriate cyclic shift of indices by $h \in \mb Z_k$, so $\mc P_i := \mc V_{i+h} \in \fullrs_1(\chi_{i})$. If we perform the primal construction by choosing the point sets $\mc P_i$ as realizations of $\chi_i$, and choosing $q_i^1,q_i^n,\mc A_0$ among the vertices of $A^1$ and $A^n$ so that each map $\phi_i$ is the identity, then we obtain the same arrangement $\mc A$ that we started with.  Thus, by Lemma~\ref{univpointstype} the vertices of $\mc A$ labeled by $a_i^s = v_{i+h}^s$ have order type $\chi$.

We will now see this indexing is unique, which implies $\varphi$ is injective.  Since the vertices of $A^1$ and $A^t$ appear in an alternating order around the convex boundary of their union, the indexing of the vertices is fixed up to a cyclic shift, and since the cyclic ordering of $\chi_1,\dots,\chi_k$ does not have periodicity smaller than $k$, $h$ is the unique cyclic shift of indices for which the vertices have order type $\chi$.  Hence $\varphi$ is a bijection, and since it and its inverse are continuous, the claim holds.

Finally, $\fullrs_k(\coty)$ is homeomorphic to $\fullrs_1(\chi)$, which is homeomorphic to $\fullrs_1(\omega) \times \projrs_1(\psi_1) \times \dots \times \projrs_1(\psi_k)$, so by identifying projectively equivalent realizations, $\projrs_k(\coty)$ is homeomorphic to $\projrs_1(\omega) \times \projrs_1(\psi_1) \times \dots \times \projrs_1(\psi_k)$, and since $\projrs_1(\omega)$ is contractible and $\projrs_1(\psi_i)$ is homotopic to $\projrs_1(\chi_i)$, $\projrs_k(\coty)$ is homotopic to $\projrs_1(\chi_1) \times \dots \times \projrs_1(\chi_k)$.
\qed

\section{Open problems and concluding remarks} \label{conclude}

What is the smallest integer $k_n$ such that any order type on $n$ elements can be realized by convex $k_n$-gons? 
Theorem~\ref{k-bounds} gives asymptotic bounds, but there remains a wide gap.

Does universality hold for realizations of {\em order types} by $k$-gons? The authors were able to establish the weaker result, that universality holds for {\em non-crossing} arrangements of $k$-gons. That is, arrangements for which every {\em pair} of bodies has the same combinatorial type as a pair of distinct points, instead of every triple. The proof is similar to that of Theorem~\ref{realizations}, but depends on a construction where certain pairs bodies of the arrangement are disjoint in every realization by $k$-gons; see \fig~\ref{hidde}. 

\begin{figure}[h] 
\centering
\begin{tikzpicture}[scale=.7] 
\begin{scope}[xshift = -9cm, scale = .5]
\fill[black]
(152:2cm) circle [radius = .08]
(332:2cm) circle [radius = .08];
\draw[blue] 
(72:3cm)--(232:3cm)
(252:3cm)--(52:3cm);
\end{scope}

\begin{scope}[xshift = -4cm, scale = .5]
\fill[blue!15!white, opacity= .4] 
(0:5cm) \foreach \x in {0, 72 ,...,170} {-- (\x:5cm)  } --cycle ;
\draw[blue] 
(0:5cm) \foreach \x in {0, 72 ,...,170} {-- (\x:5cm)  } --cycle ;
\fill[blue!15!white, opacity= .4] (180:5cm) \foreach \x in {180, 252
  ,...,359} {-- (\x:5cm)  } --cycle ;
\draw[blue] (180:5cm) \foreach \x in {180, 252 ,...,359} {-- (\x:5cm)
} --cycle ; 
\fill[black]  \foreach \x in {36, 108 ,...,120} {
    -- (\x:4.5cm) circle [radius = .08]  } ;
\fill[black]  \foreach \x in {216, 288 ,...,359} {
    -- (\x:4.5cm) circle [radius = .08]  } ;
\fill[violet!15!white, opacity=.4] 
(82:4.15cm)--(162:4.5cm)--(242:4.15cm) --cycle 
(262:4.15cm)--(342:4.5cm)--(62:4.15cm) --cycle;
\draw[violet] 
(82:4.15cm)--(162:4.5cm)--(242:4.15cm) --cycle 
(262:4.15cm)--(342:4.5cm)--(62:4.15cm) --cycle;
\end{scope}

\begin{scope}[xshift = 4cm,  scale = .83, rotate = 25]
\fill[blue!15!white, opacity = .4]  (0:5cm) \foreach \x in {0, 50,
  ..., 177} {-- (\x:5cm)   } --cycle ;
\draw[blue]  (0:5cm)\foreach \x in {0, 50, ..., 177} {-- (\x:5cm)   }
--cycle ; 
\fill[blue!15!white, opacity=.4] (180:5cm) \foreach \x in {180, 230,
  ..., 356} {-- (\x:5cm) } --cycle ;
\draw[blue] (180:5cm) \foreach \x in {180, 230, ..., 356} {-- (\x:5cm)
} --cycle ; 
\fill[violet!15!white, opacity = .4]
(55:4.7cm) --(100:4.8cm) --(165:4.72cm) --(225:4.7cm) --cycle ;
\draw[violet]
(55:4.7cm) --(100:4.8cm) --(165:4.72cm) --(225:4.7cm) --cycle ;
\fill[violet!15!white, opacity = .4]
(235:4.7cm) --(280:4.8cm) --(345:4.72cm) --(45:4.7cm) --cycle;
\draw[violet]
(235:4.7cm) --(280:4.8cm) --(345:4.72cm) --(45:4.7cm) --cycle;
\fill[teal!15!white, opacity = .4]
(103:4.45cm) -- 
(165:4.57cm) --   
(230:4.655cm) --   
(277:4.45cm) --cycle
(283:4.45cm) --   
(345:4.57cm) --
(50:4.655cm) --
(97:4.45cm) --cycle;
\draw[teal]
(103:4.45cm) -- 
(165:4.57cm) --   
(230:4.655cm) --   
(277:4.45cm) --cycle
(283:4.45cm) --   
(345:4.57cm) --
(50:4.655cm) --
(97:4.45cm) --cycle;
\fill[black]  
\foreach \x in {25, 75, ..., 150} 
{-- (\x:4.73cm) circle [radius = .05]  } 
\foreach \x in {205, 255, ..., 345} 
{-- (\x:4.73cm) circle [radius = .05]  } ;
\end{scope}

\end{tikzpicture}
\caption{\ft Construction for $k=2,3,4$ from the proof of universality for non-crossing combinatorial types.}
\label{hidde}
\end{figure}
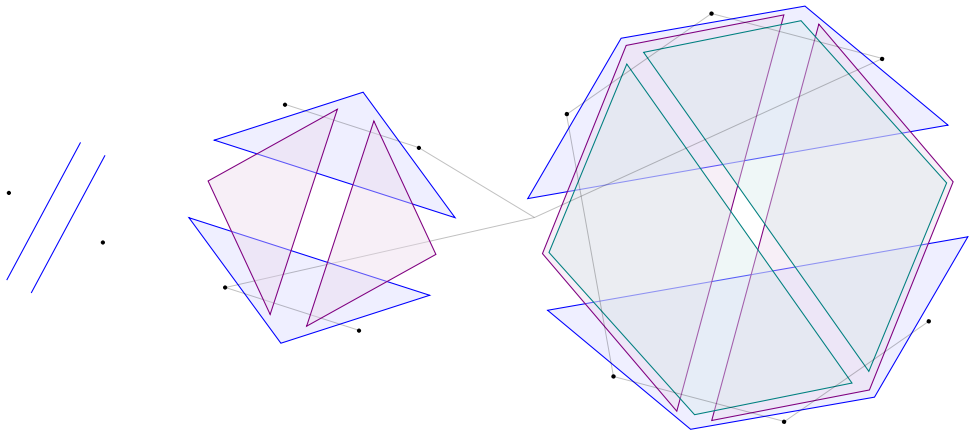

\FloatBarrier

Is the realization space of $k$-gons with the combinatorial type of $n$ points in convex position contractible?  While this may be the simplest order type, difficulties arise when the $k$-gons intersect; see \fig~\ref{windmill}.

\begin{figure}[h]
\centering
\begin{tikzpicture}[x={(-30:.6cm)},y={(60:.6cm)}]
\draw[thick,violet,fill=violet,fill opacity=0.03]
(-1.5,-.4) -- (-.5,1.8) -- (4.34,-.4) --cycle
;
\draw[thick,blue,fill=blue,fill opacity=0.03]
(.4,-1.5) -- (-1.8,-.5) -- (.4,4.34) --cycle
;
\draw[thick,black,fill=black,fill opacity=0.03]
(1.5,.4) -- (.5,-1.8) -- (-4.34,.4) --cycle
;
\draw[thick,cyan,fill=cyan,,fill opacity=0.03]
(-.4,1.5) -- (1.8,.5) -- (-.4,-4.34) --cycle
;
\end{tikzpicture}
\caption{\ft Four triangles having the same combinatorial type as vertices of a square.}
\label{windmill}
\end{figure}
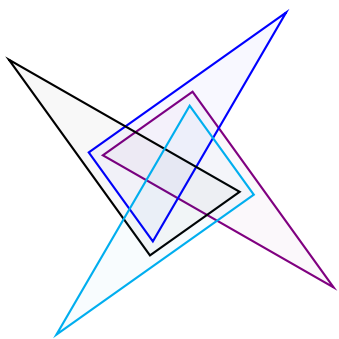

\FloatBarrier

Can every {\em order type} be realized by arrangements of {\em pairwise disjoint} bodies? Ziegler has given a construction of $2^{O(n^2)}$ distinct order types, all of which can be realized by pairwise disjoint bodies \cite[Theorem 7.4.2]{OMS}. This fails for combinatorial types in general, as there exists an arrangement of 4 {\em non-crossing} bodies in which some pair must always intersect; see \fig~\ref{notdisjoint}.  

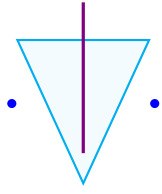
\begin{figure}[h]
\centering
\begin{tikzpicture}
\draw[thick,cyan,fill=cyan,fill opacity=0.05]
(30:1) -- (150:1) -- (270:1.4) -- cycle
;
\fill[blue]
(-20:1) circle [radius = .06]
(200:1) circle [radius = .06]
;
\draw[very thick,violet]
(0,-1) -- (0,1)
;
\end{tikzpicture}
\caption{\ft An arrangement with combinatorial type that cannot realized by disjoint bodies.}\label{notdisjoint}
\end{figure}

\FloatBarrier

Our results mostly focus on the cases where $k$ is a constant or $k$ is infinite. It would be interesting to understand how the realization space depends on $k$ as a function on $n$. For instance, Theorem \ref{realizations} states that $\projrs(\coty)$ is contractible for any non-layered combinatorial type $\coty$ on $[n]$, but is there a function $p(n)$ which guarantees that $\projrs_{p(n)}(\coty)$ is contractible? A natural guess would be $p(n) \in O(n^2)$. For combinatorial types $\coty$ on $[n]$, are there upper bounds on the Betti numbers of $\projrs_k(\coty)$ in terms of $k$ and $n$?

Unlike the order type, the definition of combinatorial type is non-local, in the sense that order type depends only on information about triples, whereas combinatorial type depends on global information.  In fact, there are pairs of arrangements where for each triple, the corresponding combinatorial type of that triple is the same for both arrangements, but the combinatorial type of the two arrangements overall is not the same; see \fig~\ref{nonlocal}. Can combinatorial type be defined from local information when the local complexity is bounded? More specifically, is there an integer $m_t$, such that the combinatorial type of an arrangement is uniquely determined by the combinatorial type of each sub-arrangement of $m_t$ bodies, provided that each pair of bodies has at most $t$ common supporting tangents? 

\begin{figure}[h] 
\centering
\begin{tikzpicture}

\begin{scope}[xscale=-1]

\draw[thick]
(30:1) -- (90:1) -- (150:1) -- (210:1) -- (270:1) -- (330:1) -- cycle;

\draw[thick, cyan]
(60:1) -- ++(0,-1.5);

\draw[thick, blue]
(120:1) -- ++(0,-1.5);

\node at (0:1) {\color{teal} $\bullet$};
\node at (180:1) {\color{violet} $\bullet$};

\end{scope}

\begin{scope}[shift={(-2.5cm,-3cm)},scale=.3]

\draw[violet, thick]
(0,4) \ds \dg{2} \de \g{0} \ds \de \g{6} \us \ug{3} \ue
;

\draw[thick]
(0,3) \us \ue \ds \du \ue \ds \du \ue \ds \du \ue \g{4} \ds \de
;

\draw[blue, thick]
(0,2) \g{1} \us \ug{1} \ud \dg{1} \de \g{0} \ds \de \g{3} \us \ug{1} \ue \ds \de \g{1}
;

\draw[cyan, thick]
(0,1) \g{2} \us \ue \g{0} \us \ug{1} \ud \dg{1} \de \g{1} \us \ue \ds \dg{1} \de \g{2}
;

\draw[teal, thick]
(0,0) \g{4} \us \ue \g{0} \us \ug{2} \ud \dg{3} \de \g{3}
;

\end{scope}

\begin{scope}[shift={(7cm,0cm)},xscale=-1]

\draw[thick]
(30:1) -- (90:1) -- (150:1) -- (210:1) -- (270:1) -- (330:1) -- cycle;

\draw[thick, cyan]
(-60:1) -- ++(0,1.5);

\draw[thick, blue]
(120:1) -- ++(0,-1.5);

\node at (0:1) {\color{teal} $\bullet$};
\node at (180:1) {\color{violet} $\bullet$};

\end{scope}

\begin{scope}[shift={(4.5cm,-3cm)},scale=.3]

\draw[violet, thick]
(0,4) \ds \dg{2} \de \g{0} \ds \de \g{6} \us \ug{3} \ue
;

\draw[thick]
(0,3) \us \ue \ds \du \ue \g{2} \ds \du \ue \ds \du \ue \g{2} \ds \de
;

\draw[blue, thick]
(0,2) \g{1} \us \ug{1} \ud \dg{2} \de \g{3} \us \ue \us \ue \ds \de \g{1}
;

\draw[cyan, thick]
(0,1) \g{2} \us \ue \us \ue \ds \de \g{1} \us \ug{1} \ud \dg{2} \de \g{2}
;

\draw[teal, thick]
(0,0) \g{4} \us \ug{3} \ud \dg{2} \de \ds \de \g{3}
;

\end{scope}

\end{tikzpicture}
\caption{\ft \textbf{Top:} Two arrangements that are related by a bijection preserving the combinatorial type of triples, but that do not have the same combinatorial type. \textbf{Bottom:} Their dual support system.}
\label{nonlocal}
\end{figure}
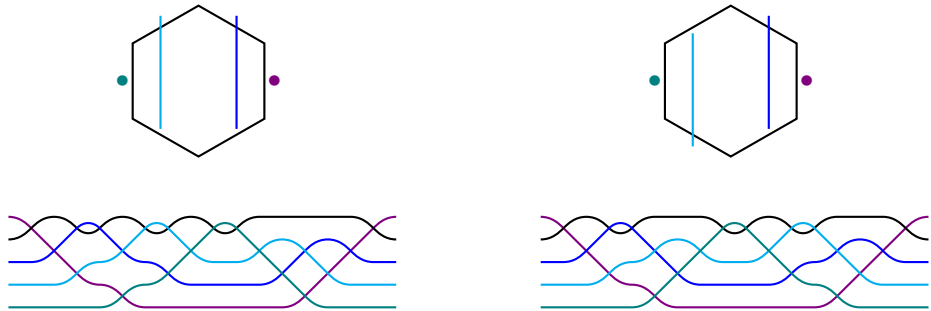

\FloatBarrier

We have bounded the complexity of the bodies by working in the space of $k$-gons. Are there other measures of complexity that yield   a universality theorem? Any continuous map from $\mathbb{R}^{c(n)}$ to the space of arrangements of convex bodies defines a finite dimensional subspace.  In the case of $k$-gons in the plane, $c(n)=2kn$.    Does universality hold for any other such map?  Consider for instance subspaces having bounded VC-dimension.

Our notion of combinatorial type extends to higher dimensions and to systems of sections of vector bundles other than the cylinder. It would be interesting to see how our results may extend in these cases.

\bibliography{realbody1}{}

\begin{thebibliography}{10}

\bibitem{OMS}
Anders Bj{\"o}rner, Michel Las~Vergnas, Bernd Sturmfels, Neil White, and
  G{\"u}nter~M. Ziegler.
\newblock {\em Oriented matroids}, volume~46 of {\em Encyclopedia of
  Mathematics and its Applications}.
\newblock Cambridge University Press, 1999.

\bibitem{ragga}
Raghavan Dhandapani, Jacob~E. Goodman, Andreas Holmsen, and Richard Pollack.
\newblock Interval sequences and the combinatorial encoding of planar families
  of pairwise disjoint convex sets.
\newblock {\em Rev. Roum. Math. Pures Appl}, 50(5-6):537--553, 2005.

\bibitem{dhh2}
Michael~Gene Dobbins, Andreas Holmsen, and Alfredo Hubard.
\newblock Regular systems of paths and families of convex sets in convex
  position.
\newblock {\em Transactions of the AMS}.
\newblock (to appear).

\bibitem{dhh1}
Michael~Gene Dobbins, Andreas Holmsen, and Alfredo Hubard.
\newblock The {E}rd{\H o}s-{S}zekeres problem for non-crossing convex sets.
\newblock {\em Mathematika}, 60(2):463--484, 2014.

\bibitem{fels-valt}
Stefan Felsner and Pavel Valtr.
\newblock Coding and counting arrangements of pseudolines.
\newblock {\em Discrete \& Computational Geometry}, 46(3):405--416, 2011.

\bibitem{folkman}
Jon Folkman and Jim Lawrence.
\newblock Oriented matroids.
\newblock {\em Journal of Combinatorial Theory, Series B}, 25(2):199--236,
  1978.

\bibitem{goodburr}
Jacob~E. Goodman.
\newblock Proof of a conjecture of {B}urr, {G}r{\"u}nbaum, and {S}loane.
\newblock {\em Discrete Mathematics}, 32(1):27--35, 1980.

\bibitem{GPallow}
Jacob~E. Goodman and Richard Pollack.
\newblock On the combinatorial classification of nondegenerate configurations
  in the plane.
\newblock {\em Journal of Combinatorial Theory, Series A}, 29(2):220--235,
  1980.

\bibitem{GPsemi}
Jacob~E. Goodman and Richard Pollack.
\newblock Semispaces of configurations, cell complexes of arrangements.
\newblock {\em Journal of Combinatorial Theory, Series A}, 37(3):257--293,
  1984.

\bibitem{gp-upper}
Jacob~E. Goodman and Richard Pollack.
\newblock Upper bounds for configurations and polytopes in {$\mathbb{R}^d$}.
\newblock {\em Discrete \& Computational Geometry}, 1(1):219--227, 1986.

\bibitem{GPdoub}
Jacob~E. Goodman and Richard Pollack.
\newblock The combinatorial encoding of disjoint convex sets in the plane.
\newblock {\em Combinatorica}, 28(1):69--81, 2008.

\bibitem{Groemer1996}
Helmut Groemer.
\newblock {\em Geometric applications of Fourier series and spherical
  harmonics}, volume~61 of {\em Encyclopedia of Mathematics and its
  Applications}.
\newblock Cambridge University Press, 1996.

\bibitem{grunbaumS}
Branko Gr{\"u}nbaum.
\newblock {\em Arrangements and spreads}.
\newblock American Mathematical Society, 1972.

\bibitem{habert}
Luc Habert and Michel Pocchiola.
\newblock Arrangements of double pseudolines.
\newblock In {\em Proceedings of the 25th Annual Symposium on Computational
  Geometry}, pages 314--323. ACM, 2009.

\bibitem{alfie}
Alfredo Hubard.
\newblock Erd{\H{o}}s-{S}zekeres para convexos.
\newblock Bachelor's thesis, UNAM, 2005.

\bibitem{hubsuk}
Alfredo Hubard, Luis Montejano, Emiliano Mora, and Andrew Suk.
\newblock Order types of convex bodies.
\newblock {\em Order}, 28(1):121--130, 2011.

\bibitem{milson}
Michael Kapovich and John~J. Millson.
\newblock Universality theorems for configuration spaces of planar linkages.
\newblock {\em Topology}, 41(6):1051--1107, 2002.

\bibitem{knuti}
Donald~E. Knuth.
\newblock {\em Axioms and hulls}, volume 606 of {\em Lecture Notes in Computer
  Science}.
\newblock Springer-Verlag, 1992.

\bibitem{levi}
Friedrich Levi.
\newblock Die teilung der projektiven ebene durch gerade oder pseudogerade.
\newblock {\em Ber. Math.-Phys. Kl. S{\"a}chs. Akad. Wiss}, 78:256--267, 1926.

\bibitem{mnev1}
Nicolai~E. Mnev.
\newblock Varieties of combinatorial types of projective configurations and
  convex polytopes.
\newblock {\em Doklady Akademii Nauk SSSR}, 283(6):1312--1314, 1985.

\bibitem{mnev2}
Nicolai~E. Mnev.
\newblock The universality theorems on the classification problem of
  configuration varieties and convex polytopes varieties.
\newblock In {\em Topology and Geometry: Rohlin seminar}, pages 527--543.
  Springer, 1988.

\bibitem{novick2}
Mordechai Novick.
\newblock Allowable interval sequences and line transversals in the plane.
\newblock {\em Discrete \& Computational Geometry}, 48(4):1058--1073, 2012.

\bibitem{novick1}
Mordechai Novick.
\newblock Allowable interval sequences and separating convex sets in the plane.
\newblock {\em Discrete \& Computational Geometry}, 47(2):378--392, 2012.

\bibitem{pt3}
J{\'a}nos Pach and G{\'e}za T{\'o}th.
\newblock Families of convex sets not representable by points.
\newblock In {\em Algorithms, architectures and information systems security},
  volume~3, page~43. World Scientific, 2008.

\bibitem{padrol2014delaunay}
Arnau Padrol and Louis Theran.
\newblock Delaunay triangulations with disconnected realization spaces.
\newblock In {\em Proceedings of the 30th Annual Symposium on Computational
  Geometry}, pages 163--170. ACM, 2014.

\bibitem{gebertReal}
J{\"u}rgen Richter-Gebert.
\newblock {\em Realization spaces of polytopes}, volume 1643 of {\em Lecture
  Notes in Mathematics}.
\newblock Springer Verlag, 1996.

\bibitem{ringel2}
Gerhard Ringel.
\newblock Teilungen der ebene durch geraden oder topologische geraden.
\newblock {\em Mathematische Zeitschrift}, 64(1):79--102, 1956.

\bibitem{shor}
Peter~W. Shor.
\newblock Stretchability of pseudolines is {NP}-hard.
\newblock In {\em Applied Geometry and Discrete Mathematics: The Victor Klee
  Festschrift}, volume~4, pages 531--554. American Mathematical Society, 1991.

\bibitem{tsukamoto2013new}
Yasuyuki Tsukamoto.
\newblock New examples of oriented matroids with disconnected realization
  spaces.
\newblock {\em Discrete \& Computational Geometry}, 49(2):287--295, 2013.

\bibitem{vakil}
Ravi Vakil.
\newblock Murphy’s law in algebraic geometry: badly-behaved deformation
  spaces.
\newblock {\em Inventiones Mathematicae}, 164(3):569--590, 2006.

\end{thebibliography}
\bibliographystyle{plain}

\end{document}